\newtheorem{deff}{Definition}[section]
\newtheorem{lemma}[deff]{Lemma}
\newtheorem{theorem}[deff]{Theorem}
\newtheorem{corollary}[deff]{Corollary}
\newtheorem{fact}[deff]{Fact}
\newtheorem{em-example}[deff]{Example}
\newtheorem{em-def}[deff]{Definition}        
\newtheorem{em-remark}[deff]{Remark}         
\newtheorem{em-question}[deff]{Question}
\newenvironment{example}{\begin{em-example} \em }{ \end{em-example}}
\newenvironment{definition}{\begin{em-def} \em  }{ \end{em-def}}
\newenvironment{remark}{\begin{em-remark} \em }{\end{em-remark}}
\newenvironment{question}{\begin{em-question}\em }{\end{em-question}}
\newcommand{\een}{\end{enumerate}}
\newcommand{\bit}{\begin{itemize}}
\newcommand{\eit}{\end{itemize}}
\newcommand\DEQ{\hfill $\bigtriangleup$ \medskip}
\def\nbd{neighbourhood{}}
\def\hull#1{\langle#1\rangle}
\def\ker{\mathop{\rm ker}}
\def\:{\nobreak \hskip .1111em\mathpunct {}\nonscript \mkern
-\thinmuskip {:}\hskip .3333emplus.0555em\relax}
\def\T{{\mathbb T}}
\def\Z{{\mathbb Z}}
\def\N{{\mathbb N}}
\def\R{{\mathbb R}}
\def\Q{{\mathbb Q}}
\def\P{{\mathbb P}}
\def\Prm{\P}
\def\cont{\mathfrak c}
\newcommand{\cc}{countably\ compact}
\newcommand{\mi}{minimal}
\newcommand{\cag}{compact abelian group}
\def\ct{{countable tightness}}
\def\aa{$\chi$-abelian}
\def\cont{\mathfrak{c}}
\def\seq{\mathfrak{s}}
\def\hull#1{\langle{#1}\rangle}
\def\Min{\mathbf{Min}}
\def\grp#1{\langle{#1}\rangle}
\title[Cardinal invariants of locally minimal abelian groups
]{Cardinal invariants and convergence properties of locally minimal groups} 
\author[D. Dikranjan]{Dikran Dikranjan}
\address[Dikran Dikranjan]{Dipartimento di Matematica e Informatica, Universit\`a di Udine
\\
 via delle Scienze, 206 - 33100 Udine, Italy}
\email{dikran.dikranjan@dimi.uniud.it}
\author[D. Shakhmatov]{Dmitri Shakhmatov}
\address[Dmitri Shakhmatov]{Graduate School of Science and Engineering,
Division of Mathematics, Physics and Earth Sciences\\
Ehime University, Matsuyama 790-8577, Japan}
\email{dmitri.shakhmatov@ehime-u.ac.jp}
\begin{document}
\begin{abstract} 
If $G$ is a locally essential subgroup of a compact abelian group $K$, then:
\begin{itemize}
\item[(i)] $t(G)=w(G)=w(K)$, where $t(G)$ is the tightness of $G$;
\item[(ii)] if $G$ is radial, then $K$ must be metrizable;
\item[(iii)] $G$ contains a super-sequence $S$ converging to $0$ such that 
$|S|=w(G)=w(K)$.
\end{itemize}

Items (i)--(iii) hold when $G$ is a dense locally minimal subgroup of $K$. We show that locally minimal locally precompact abelian groups of countable tightness are metrizable. In particular, a minimal abelian group of countable tightness is metrizable. This answers a question of O.~Okunev posed in 2007.

For every uncountable cardinal $\kappa$, we construct a  Fr\' echet-Urysohn minimal group $G$ of character $\kappa$ such that the connected component of $G$ is an open normal $\omega$-bounded subgroup (thus, $G$ is locally precompact).  
We also build a minimal nilpotent group of nilpotency class 2 without non-trivial convergent  sequences
having an open normal countably compact subgroup.
\end{abstract}

\thanks{2010 {\em Mathematics Subject Classification}: 22A05, 22C05, 22D05, 54H11.\\
{\em Key words and phrases}: (locally) compact group, (locally) precompact group minimal group, 
locally minimal group, metrizable group, Fr\' echet-Urysohn group, sequential group, radial group, 
tightness, character, weight,  nilpotent group, Heisenberg group. }

\thanks{The first author gratefully acknowledges the FY2013 Long-term visitor grant~L13710 by the Japan Society for the Promotion of Science (JSPS)
 and the grant PRID (Topological, Categorical and Dynamical Methods in Algebra, DMIF) at the University of Udine.}

\thanks{The second author was partially supported by the Grant-in-Aid for Scientific Research (C) No.~22540089 by the Japan Society for the Promotion of Science (JSPS)}

\dedicatory{Dedicated to Misha Megrelishvili on the occasion of his 60th anniversary}

\maketitle

{\it All topological groups are assumed to be Hausdorff.\/}

\section{Preliminaries}

A topological group is {\em (locally) precompact\/} if it is topologically isomorphic to a subgroup of some (locally) compact group, or equivalently, if its completion with respect to the left uniformity is (locally) compact.

Symbols $t(X)$, $\chi(X)$, $nw(X)$, $w(X)$ denote tightness, character, network weight and weight of a space $X$, respectively.
{\em All cardinal invariants are assumed to take infinite values.\/}

\begin{definition}
\label{sigma:product}
For a non-trivial group $K$ and a cardinal $\kappa$, we define
$$
\Sigma_\kappa(K)=\{f\in K^\kappa: |\{\alpha<\kappa: f(\alpha)\not=1\}|\le\omega\}
$$
to be the $\Sigma$-product of $\kappa$-many copies of $K$.
\end{definition}

Recall that a space $X$ is said to be {\em $\omega$-bounded\/} if the closure in $X$ of every countable subset of $X$ is compact.

\begin{fact}
\label{sigma-product:fact}
Let $K$ be a non-trivial compact metric group and $\kappa$ be an uncountable cardinal. Then $G=\Sigma_\kappa(K)$ is an $\omega$-bounded, Fr\' echet-Urysohn group such that $\chi(G)=w(G)=\kappa$. Moreover, the following properties pass from $K$ to $G$:
\begin{itemize}
\item[(i)] connectedness;
\item[(ii)] zero-dimensionality.
\end{itemize}
\end{fact}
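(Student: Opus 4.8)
\noindent\emph{Proof plan.}
Write $G=\Sigma_\kappa(K)$, and for a countable $S\subseteq\kappa$ put $M_S=\{f\in K^\kappa:f(\alpha)=1\ \text{for all}\ \alpha\in\kappa\setminus S\}$. Each $M_S$ is an intersection of preimages of the closed singleton $\{1\}$ under coordinate projections, so it is closed in $K^\kappa$; with the subspace topology it is topologically isomorphic to $K^S$, hence a compact metrizable (thus first countable and Fr\'echet--Urysohn) group, and it is contained in $G$. Since every $f\in G$ has countable support, $G$ is the union of the directed family $\{M_S:S\in[\kappa]^{\le\omega}\}$, and $1\in M_S$ for every $S$. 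The whole argument amounts to transferring properties between $G$ and these compact metrizable ``sub-boxes''. For $\omega$-boundedness, given a countable $A\subseteq G$ the set $S=\bigcup_{f\in A}\supp(f)$ is countable, so $A\subseteq M_S$; as $M_S$ is compact and closed in $K^\kappa$, the closure of $A$ in $G$ is contained in $M_S$ and is therefore compact.

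The Fr\'echet--Urysohn property is the crux. I would prove the following lemma: if $A\subseteq G$ and $x\in\overline{A}$, then there exist a countable $S\subseteq\kappa$ with $\supp(x)\subseteq S$ and a countable $B\subseteq A$ with $B\subseteq M_S$ and $x\in\overline{B}$. Granting this, $\overline{B}$ lies in the metrizable space $M_S$, hence is metrizable, so some sequence from $B\subseteq A$ converges to $x$, and $G$ is Fr\'echet--Urysohn. The lemma itself is obtained by a recursion of length $\omega$: set $S_0=\supp(x)$; given a countable $S_n\supseteq\supp(x)$, the point $\pi_{S_n}(x)$ lies in the closure of $\pi_{S_n}(A)$ in the metrizable space $K^{S_n}$ (where $\pi_{S_n}\colon K^\kappa\to K^{S_n}$ is the projection), so one can pick a countable $C_n\subseteq A$ with $\pi_{S_n}(x)\in\overline{\pi_{S_n}(C_n)}$; then set $S_{n+1}=S_n\cup\bigcup_{a\in C_n}\supp(a)$, again countable. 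Finally let $S=\bigcup_n S_n$ and $B=\bigcup_n C_n$. Then $\supp(x)\subseteq S$ and $B\subseteq M_S$ by construction, and $x\in\overline{B}$ because any basic neighbourhood of $x$ in $K^\kappa$ is determined by a finite coordinate set $F$: the coordinates in $F\setminus S$ impose no restriction (there both $x$ and every member of $B$ take the value $1$), while $F\cap S$ lies in some $S_n$ and the restriction on those coordinates is satisfied by some $b\in C_n$ since $\pi_{S_n}(x)\in\overline{\pi_{S_n}(C_n)}$. I expect this recursion --- ensuring that $S$ and $B$ come out simultaneously countable while $x$ stays in the closure of $B$ --- to be the only genuinely delicate step; everything else is a routine transfer.

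For the cardinal invariants, $G$ is a subspace of $K^\kappa$, so $w(G)\le w(K^\kappa)=\kappa$ and hence $\chi(G)\le w(G)\le\kappa$. For the reverse inequality suppose, towards a contradiction, that $\mathcal{B}$ is a local base at $1$ in $G$ with $|\mathcal{B}|<\kappa$. Each $B\in\mathcal{B}$ contains $G\cap W_B$ for some basic open $W_B\subseteq K^\kappa$ with $1\in W_B$, depending on a finite coordinate set $F_B$. Since $\kappa$ is uncountable we have $|\bigcup_{B\in\mathcal{B}}F_B|\le|\mathcal{B}|\cdot\omega<\kappa$, so we may choose $\gamma\in\kappa\setminus\bigcup_{B\in\mathcal{B}}F_B$. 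Then for every $B\in\mathcal{B}$ the $\gamma$-th coordinate is unconstrained inside $G\cap W_B$ (it can be set to any value of $K$ by a function of finite support), so $\pi_\gamma(B)=K$; fixing a proper open neighbourhood $U$ of $1$ in the non-trivial Hausdorff group $K$, the set $\{f\in G:f(\gamma)\in U\}$ is an open neighbourhood of $1$ in $G$ that contains no member of $\mathcal{B}$, a contradiction. Hence $\chi(G)=w(G)=\kappa$.

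Finally, the hereditary properties. If $K$ is connected then so is each $M_S\cong K^S$ (an arbitrary product of connected spaces is connected), and since the sets $M_S$ are connected, cover $G$, and all contain $1$, their union $G$ is connected. If $K$ has a base of clopen sets, then the finite ``boxes'' $\bigcap_{\alpha\in F}\pi_\alpha^{-1}(V_\alpha)$ with $F\subseteq\kappa$ finite and each $V_\alpha$ clopen in $K$ form a base of clopen subsets of $K^\kappa$, and tracing this base on the subspace $G$ yields a base of clopen subsets of $G$; thus zero-dimensionality passes from $K$ to $G$.
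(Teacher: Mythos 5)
Your proof is correct in all five parts: the reduction to the compact metrizable ``sub-boxes'' $M_S$, the closing-off recursion for the Fr\'echet--Urysohn property, the free-coordinate argument for $\chi(G)\ge\kappa$, and the transfer of connectedness and zero-dimensionality are all sound. The paper states this as a Fact without proof (it is folklore), so there is nothing to compare against; your argument is the standard one and fills in exactly the details the authors omit.
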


For topological groups $G$ and $H$, the symbol $G\cong H$ means that $G$ and $H$ are topologically isomorphic; that is, there exists an isomorphism between $G$ and $H$ which is also a homeomorphism.

\section{Background on (locally) minimal groups}
\label{sec:2}

\begin{definition}
A Hausdorff group topology $\mathscr{T}$ on  a group $G$ is called:
\begin{itemize}
\item[(i)]
\emph{minimal\/} if every Hausdorff group topology $\mathscr{T}'$ on $G$ such that $\mathscr{T}'\subseteq \mathscr{T}$ satisfies $\mathscr{T}' = \mathscr{T}$, 
\item[(ii)]
{\em locally minimal\/}   if there exists $U\in\mathscr{T}$ such that $1\in U$ and 
$\mathscr{T}' = \mathscr{T}$ holds for every Hausdorff group topology $\mathscr{T}'\subseteq \mathscr{T}$ on $G$ for which 
$U$ is a $\mathscr{T}'$-neighborhood of $1$. (Here $1$ denotes the identity element of $G$.)

\end{itemize}
 \end{definition}

Obviously,
\begin{equation}
\label{minimal:is:locally:minimal}
\mbox{compact}\to\mbox{minimal}\to\mbox{locally minimal}.
\end{equation}

The notion of a minimal group was introduced independently by Choquet (see Do\" \i tchinov \cite{Doi}) and Stephenson \cite{St} almost 
fifty years ago as an important generalization of compactness for topological groups. 

Even though compact groups are minimal, local compactness need not imply minimality as the following theorem, due to Stephenson \cite{St}, shows: 

\begin{theorem}
\label{Steph}
A locally compact abelian group is minimal if and only if it is compact.
\end{theorem}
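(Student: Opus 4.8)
The plan is to prove the non-trivial implication by contraposition: if $G=(G,\mathscr T)$ is a locally compact abelian group that is \emph{not} compact, I want to produce a Hausdorff group topology on $G$ strictly coarser than $\mathscr T$, which shows that $\mathscr T$ is not minimal. (The implication ``compact $\Rightarrow$ minimal'' is just the first arrow of \eqref{minimal:is:locally:minimal}.)

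One tempting route is structural: by the structure theory of locally compact abelian groups, $G\cong\R^n\times G_1$ where $G_1$ has a compact open subgroup $K$; one first forces $n=0$, since $\R$ admits a strictly coarser Hausdorff group topology (for instance the one pulled back along a dense one-parameter embedding $\R\hookrightarrow\T^2$, which is strictly coarser because that topology is not complete), and then analyzes the infinite discrete quotient $G/K$. This works, but the analysis of $G/K$ is fiddly, particularly because the extension $0\to K\to G\to G/K\to 0$ need not split. I would instead bypass structure theory entirely and use the Bohr topology.

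So let $\mathscr T^+$ be the initial group topology on $G$ induced by all continuous characters $\chi\colon G\to\T$, equivalently the topology pulled back along the diagonal homomorphism $\iota\colon G\to\T^{\widehat{G}}$, $\iota(g)=(\chi(g))_{\chi\in\widehat{G}}$. As an initial topology with respect to homomorphisms into topological groups, $\mathscr T^+$ is a group topology; it is contained in $\mathscr T$ since every $\chi$ is $\mathscr T$-continuous; and it is Hausdorff because, $G$ being locally compact abelian, Pontryagin duality guarantees that $\widehat{G}$ separates the points of $G$. Now I check that $\mathscr T^+\subsetneq\mathscr T$: if instead $\mathscr T^+=\mathscr T$, then $\iota$ is a topological embedding, so $G$ is topologically isomorphic to a subgroup of the compact group $\T^{\widehat{G}}$ and in particular is precompact; but a locally compact subgroup of a Hausdorff topological group is closed, so $\iota(G)$ is closed in $\T^{\widehat{G}}$, hence closed in the compact group $C:=\overline{\iota(G)}$, and being dense in $C$ it equals $C$ — so $G$ is compact, contrary to assumption. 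Therefore $\mathscr T^+$ is a Hausdorff group topology strictly coarser than $\mathscr T$, and $\mathscr T$ is not minimal.

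I do not anticipate a genuine obstacle; the only points requiring care are the (routine) verification that $\mathscr T^+$ really is a Hausdorff group topology below $\mathscr T$, and the invocation of Pontryagin duality for point separation, which is the one substantive external input. (Alternatively, granting the Prodanov--Stephenson theorem that every minimal abelian group is precompact, the statement is immediate, since a precompact complete group is compact and locally compact groups are complete; but that theorem is a heavier tool than is needed for the locally compact abelian case at hand.)
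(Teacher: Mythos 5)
Your argument is correct. Note first that the paper itself does not prove this statement: it is quoted as Stephenson's theorem with a citation to \cite{St}, so there is no in-paper proof to compare against. Your Bohr-topology argument is the standard classical proof of the nontrivial direction and all the steps check out: the initial topology $\mathscr T^+$ induced by $\widehat G$ is a group topology below $\mathscr T$, it is Hausdorff because the characters of a locally compact abelian group separate points (Pontryagin--van Kampen, or Peter--Weyl), and if $\mathscr T^+=\mathscr T$ then $G$ embeds topologically into $\T^{\widehat G}$, whence the locally compact (equivalently, complete) image is closed in the compact codomain and $G$ is compact --- so for non-compact $G$ the topology $\mathscr T^+$ is strictly coarser and $\mathscr T$ is not minimal. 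You were right to discard the structural route through $\R^n\times G_1$; the Bohr topology bypasses the non-split extension issue cleanly. Two small remarks: the precompactness theorem you mention in the closing parenthesis is due to Prodanov and Stoyanov (Theorem \ref{PS} of the paper), not Stephenson, and invoking it here would be circular in spirit only in the historical sense (Stephenson's theorem predates it), not logically; and the key auxiliary fact that a locally compact subgroup of a Hausdorff group is closed is indeed standard (Hewitt--Ross), so no gap there.
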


Motivated by the fact that local compactness need not imply minimality, Morris and Pestov \cite{MP}  introduced the class of locally minimal groups (see also Banakh \cite{Ban}).  The importance of this notion stems from the following fact pointed out in \cite{MP}:

\begin{fact}\label{Last:Fact} Both locally compact groups and subgroups of Banach-Lie groups are locally minimal.
\end{fact}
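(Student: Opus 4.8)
The plan is to prove the two assertions by separate arguments, as they draw on different sources of local rigidity: for locally compact groups, the fact that a continuous bijection from a compact space onto a Hausdorff space is a homeomorphism; for (subgroups of) Banach-Lie groups, an ``iterated doubling'' argument carried out inside a small ball around the identity.

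For the locally compact case I would take the witnessing neighborhood $U$ to be a compact $\mathscr{T}$-neighborhood of $1$, available by local compactness. Given a Hausdorff group topology $\mathscr{T}'\subseteq\mathscr{T}$ on $G$ for which $U$ is a $\mathscr{T}'$-neighborhood of $1$, the identity map $(U,\mathscr{T}|_U)\to(U,\mathscr{T}'|_U)$ is a continuous bijection from a compact space onto a Hausdorff space, hence a homeomorphism, so $\mathscr{T}|_U=\mathscr{T}'|_U$. A short translation argument then finishes the job: for any $\mathscr{T}$-open $W\ni1$, the set $W\cap\mathrm{int}_{\mathscr{T}}(U)$ is $\mathscr{T}|_U$-open, hence $\mathscr{T}'|_U$-open, hence of the form $O\cap U$ with $O\in\mathscr{T}'$, and then $O\cap\mathrm{int}_{\mathscr{T}'}(U)$ is a $\mathscr{T}'$-neighborhood of $1$ contained in $W$. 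Thus every $\mathscr{T}$-neighborhood of $1$ is a $\mathscr{T}'$-neighborhood of $1$, whence $\mathscr{T}=\mathscr{T}'$ by homogeneity.

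For the Banach-Lie case, let $G$ be a Banach-Lie group with Lie algebra $(\mathfrak{g},\|\cdot\|)$ and exponential map $\exp$, and write $B_\varepsilon=\{v\in\mathfrak{g}:\|v\|<\varepsilon\}$. Fix $r>0$ so small that $\exp$ maps $B_{2r}$ homeomorphically onto an open neighborhood of $1$, with the sets $\exp(B_\varepsilon)$, $0<\varepsilon<r$, forming a neighborhood base at $1$; put $U=\exp(B_r)$. I would show that for \emph{any} subgroup $H\le G$ the set $U\cap H$ witnesses local minimality of $(H,\mathscr{T}|_H)$; the case $H=G$ then also settles Banach-Lie groups themselves. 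Given a Hausdorff group topology $\mathscr{S}\subseteq\mathscr{T}|_H$ on $H$ for which $U\cap H$ is an $\mathscr{S}$-neighborhood of $1$, fix $\varepsilon\in(0,r)$, pick $n$ with $2^{-n}r<\varepsilon$, and (using $\mathscr{S}$-continuity of multiplication) choose $\mathscr{S}$-neighborhoods $V_0\supseteq V_1\supseteq\dots\supseteq V_n$ of $1$ in $H$ with $V_0\subseteq U\cap H$ and $V_k\cdot V_k\subseteq V_{k-1}$, so that $V_n^{2^k}\subseteq V_0\subseteq U$ for every $k\le n$. For $x\in V_n$ write $x=\exp(v)$ with $v\in B_r$ (possible since $x\in U$, and unique since $\exp$ is injective on $B_{2r}$); since $t\mapsto\exp(tv)$ is a one-parameter subgroup, $x^{2^j}=\exp(2^j v)$, and this element lies in $V_n^{2^j}\subseteq U=\exp(B_r)$ for every $j\le n$. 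An induction on $j$ now shows $v\in B_{2^{-j}r}$: if $v\in B_{2^{-j}r}$ with $j<n$, then $2^{j+1}v\in B_{2r}$, and since $\exp(2^{j+1}v)=x^{2^{j+1}}\in\exp(B_r)$ and $\exp$ is injective on $B_{2r}$, we get $2^{j+1}v\in B_r$, i.e.\ $v\in B_{2^{-(j+1)}r}$. At $j=n$ this yields $v\in B_{2^{-n}r}\subseteq B_\varepsilon$, so $V_n\subseteq\exp(B_\varepsilon)\cap H$; hence this basic $\mathscr{T}|_H$-neighborhood of $1$ is an $\mathscr{S}$-neighborhood of $1$, and letting $\varepsilon\to0$ and translating gives $\mathscr{S}=\mathscr{T}|_H$.

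The routine ingredients are the passage to the identity via translations and the bookkeeping with the nested neighborhoods $V_k$. The step I expect to be delicate is the final induction: one cannot raise $x$ to the power $2^n$ in a single step and read off $\|2^n v\|<r$, because $\exp$ need not be globally injective and $\exp(2^n v)$ could lie in $U$ through a different, small preimage while $2^n v$ itself is large. Doubling one step at a time keeps the relevant vector $2^{j+1}v$ inside the fixed ball $B_{2r}$ on which injectivity of $\exp$ is available, and checking that this smallness constraint propagates along the induction is the crux of the argument.
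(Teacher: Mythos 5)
The paper gives no proof of this Fact at all: it is quoted from Morris and Pestov \cite{MP} as a known result, so there is no ``paper proof'' to compare against line by line. Your two arguments are both sound and are essentially the standard ones. For the locally compact case, taking $U$ compact and using that a continuous bijection from a compact space onto a Hausdorff space is a homeomorphism correctly forces $\mathscr{T}|_U=\mathscr{T}'|_U$, and your translation step (passing from $W\cap\mathrm{int}_{\mathscr{T}}(U)=O\cap U$ to the $\mathscr{T}'$-neighborhood $O\cap\mathrm{int}_{\mathscr{T}'}(U)\subseteq W$, then using homogeneity) is exactly what is needed to upgrade this to $\mathscr{T}=\mathscr{T}'$. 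For the Banach--Lie case, your iterated-doubling argument in the exponential chart is the right mechanism, and you have correctly identified and handled the one genuinely delicate point: one must double one step at a time so that the vector $2^{j+1}v$ stays inside the ball $B_{2r}$ on which $\exp$ is known to be injective, since $\exp(2^nv)\in U$ alone would not force $2^nv$ to be small. The bookkeeping $V_n^{2^k}\subseteq V_{n-k}\subseteq U$ is correct, the conclusion $V_n\subseteq\exp(B_\varepsilon)\cap H$ shows every basic $\mathscr{T}|_H$-neighborhood of $1$ is an $\mathscr{S}$-neighborhood, and homogeneity finishes. Note also that your Banach--Lie argument proves the stronger uniform statement that the single neighborhood $U\cap H$ witnesses local minimality for \emph{every} subgroup $H$ simultaneously, which is precisely the form in which the fact is used in the paper (e.g., for subgroups of $\T$ and of Banach spaces in Section~\ref{sec:2}).
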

  
The following fundamental result is due to Prodanov and Stoyanov (see \cite{DPS}):

\begin{theorem}
\label{PS}
Every minimal abelian group is precompact.
\end{theorem}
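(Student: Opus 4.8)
The plan is to reduce the statement, by soft arguments, to the claim that every minimal abelian group has enough continuous characters, and then to prove that claim by a structural case analysis. \emph{Reduction to separation of points.} Let $G$ be a minimal abelian group and let $\du{G}$ be its group of continuous characters into $\T$. The topology $\sigma(G,\du{G})$ of pointwise convergence on $\du{G}$ is a group topology coarser than the original one, and it is Hausdorff exactly when $\du{G}$ separates the points of $G$. So if $\du{G}$ separates points, minimality forces $\sigma(G,\du{G})$ to coincide with the original topology; and since precompactness of an abelian group is equivalent, by Comfort--Ross, to its topology being the one induced by its continuous characters, $G$ is then precompact. Hence it suffices to prove that for every minimal abelian group $G$ the von Neumann radical $n(G)=\bigcap_{\chi\in\du{G}}\ker\chi$ is trivial.

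\emph{Passing to the completion.} A continuous character is uniformly continuous, so it extends uniquely to the completion $K=\widetilde{G}$; thus restriction is an isomorphism $\du{K}\cong\du{G}$, and $n(K)$ is a closed subgroup of $K$ with $n(K)\cap G=n(G)$. Moreover $G$ is \emph{essential} in $K$, i.e.\ it meets every non-trivial closed subgroup of $K$ non-trivially: were there a non-trivial closed $N\leq K$ with $N\cap G=\{0\}$, then pulling back along the quotient map $K\to K/N$ the topology of the Hausdorff group $K/N$ would yield a strictly coarser Hausdorff group topology on $G$, contradicting minimality. (It is strictly coarser: otherwise $G$ would embed as a dense topological subgroup of $K/N$ as well as of $K$, so $K$ and $K/N$ would both be completions of $G$ and hence topologically isomorphic over $G$; but the quotient map is a continuous extension to $K$ of the inclusion $G\hookrightarrow K/N$, so by uniqueness of continuous extensions it would be this isomorphism --- impossible, since its kernel $N$ is non-trivial.) In particular, if $n(K)=\{0\}$ then $n(G)=n(K)\cap G=\{0\}$, so it suffices to prove that a complete abelian group admitting a dense essential minimal subgroup has trivial von Neumann radical. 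Note that Theorem~\ref{Steph} is precisely the sub-case $G=K$ in which $G$ is additionally locally compact; the essential difficulty is that in general no local compactness of $K$ is available a priori, and the algebraic structure of $K$ must be brought into play.

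\emph{Structural reduction to building blocks.} Following Stoyanov, one uses the structure theory of abelian groups --- the torsion subgroup, the maximal divisible subgroup (a direct sum of copies of $\Q$ and of Pr\"ufer groups $\Z(p^{\infty})$), the torsion-free quotient, the homogeneous bounded-torsion components --- in combination with the topology, to reduce the problem to a few atomic situations: (a) $K$ torsion-free; (b) $K$ of prime exponent $p$ (the general bounded-exponent case reducing to this); and (c) the Pr\"ufer pieces $\Z(p^{\infty})$. In each of these, essentiality constrains the completion to a concrete compact model --- powers of $\Zp$, of $\R$, and of $\du{\Q}$ (the Pontryagin dual of the discrete group $\Q$) in case (a); the $p$-adic integers $\Zp$ and the compact product $\Z(p)^{\kappa}$ in case (b); Pr\"ufer-type completions in case (c) --- and the minimality of the dense essential subgroup then rules out any strictly coarser, typically linear, Hausdorff group topology on it. (For example, an infinite-dimensional \emph{discrete} $\mathbb{F}_{p}$-space cannot be essential in $\Z(p)^{\kappa}$, since any element of infinite support generates a finite closed subgroup disjoint from it; arguments of this flavour are what pin each block down to a compact group.)

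\emph{The main obstacle, and conclusion.} The hard core is Prodanov's lemma that every minimal abelian group of prime exponent is precompact, together with its torsion-free counterpart treated through $\Zp$ and $\du{\Q}$. What must be established there is a precise description of which dense subgroups of a compact product such as $\Z(p)^{\kappa}$ --- or $\Zp^{\kappa}$, or a product of copies of $\R$ and $\du{\Q}$ --- are essential, and a proof that essentiality together with minimality forces such a subgroup to carry the induced precompact topology rather than some finer one; negotiating the lattice of closed subgroups of these products is where the genuine difficulty lies, and I expect this to be by far the hardest part of the argument. Once each building block is known to have trivial von Neumann radical (for the exponent-$p$ blocks this is exactly Prodanov's lemma, via precompactness, and similarly for the others), the structural reduction run in reverse --- using the Minimality Criterion and the behaviour of the von Neumann radical under the subgroup and quotient operations involved --- yields $n(G)=\{0\}$ for the original $G$; by the first step, $G$ is then precompact.
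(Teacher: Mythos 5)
Your two opening reductions are sound and are indeed how the Prodanov--Stoyanov theorem begins: by the Comfort--Ross characterization of precompactness it suffices to show that the continuous characters of a minimal abelian group $G$ separate its points, and by the Banaschewski--Prodanov--Stephenson criterion (Theorem \ref{crit}) $G$ is essential in its completion $K$. For the record, the paper itself gives no proof of this statement; it is quoted from \cite{DPS} as a classical deep theorem, so your attempt has to be judged against the known proof rather than against anything in the text.

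The difficulty is that everything after those two reductions is announced rather than proved, and what is announced is exactly where the theorem lives. You yourself write that ``the hard core is Prodanov's lemma that every minimal abelian group of prime exponent is precompact, together with its torsion-free counterpart,'' and that establishing it ``is by far the hardest part of the argument'' --- and then you do not establish it. Neither the structural reduction to the atomic cases (which requires controlling the von Neumann radical under passage to the torsion part, the divisible part and the relevant quotients --- none of this is carried out) nor any atomic case itself receives a proof. There is moreover a circularity in the claim that ``essentiality constrains the completion to a concrete compact model --- powers of $\Zp$, of $\R$, and of $\du{\Q}$'': until precompactness is known, $K$ is merely some complete abelian group and you have no right to a compact product model for it; the actual proof does not identify $K$ with such a model but manufactures continuous characters of $G$ intrinsically, via Prodanov's lemma on invariant finitely additive means. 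Finally, the one concrete sub-argument you sketch does not parse: an element of a subgroup of $\Z(p)^{\kappa}$ of infinite support generates a subgroup \emph{of} that subgroup, not one disjoint from it (the correct statement, Lemma \ref{claim2} of the paper, is that an essential subgroup of $\Z(p)^{\kappa}$ must contain every element, since each non-zero element generates a closed subgroup of order $p$ with no proper non-trivial subgroups). In short, this is a reasonable road map of a known proof, not a proof: the entire mathematical content of the theorem is deferred to lemmas that are named but never demonstrated.
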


\begin{definition} 
\label{def:essential:local:essential}
A subgroup $G$ of a topological group $H$ is called: 
\begin{itemize}
\item[(i)] {\em essential\/} (in $H$) if $G\cap N=\{1\}$ for a closed normal subgroup $N$ of $H$ implies $N=\{1\}$ (see \cite{B,P,St});
\item[(ii)] {\em locally essential\/} (in $H$) if there exists a neighborhood $V$ of $1$ in $G$ such that  $H\cap N=\{1\}$ implies $N=\{1\}$ for all closed normal subgroups $N$ of $G$ contained in $V$ (see \cite{LocMin}). 
\end{itemize}
\end{definition}

The next criterion is due to Banaschewski, Prodanov and Stephenson \cite{B,P,St}, while its local version (given in parenthesis) is due to Au\ss enhofer,  Chasco, Dikranjan and Dom\'{\i}nguez \cite{LocMin}.

\begin{theorem}\label{crit} A dense subgroup $G$ of a topological group $H$ is (locally) minimal if and only if  $H$ is (locally) minimal and $G$ is (locally) essential in $H$.
\end{theorem}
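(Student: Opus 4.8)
The plan is to prove the ``minimal'' version first (this is the Banaschewski--Prodanov--Stephenson criterion), and then run a parallel argument for the ``locally minimal'' version. Write $\tau_H$ for the topology of $H$ and $\tau_G=\tau_H|_G$ for its trace on $G$; the central ambient object will be the Raikov completion, and the first thing to record is that, since $G$ is dense in $H$, the completion $\widetilde H$ of $(H,\tau_H)$ is \emph{also} a Raikov completion of $(G,\tau_G)$, so $\widetilde{(G,\tau_G)}$ and $\widetilde H$ are canonically identified.

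For the ``if'' direction I would assume $H$ minimal and $G$ essential in $H$, fix a Hausdorff group topology $\tau'\subseteq\tau_G$ on $G$, and aim to show $\tau'=\tau_G$. First form $\widehat G=\widetilde{(G,\tau')}$; the identity map $(G,\tau_G)\to(G,\tau')\hookrightarrow\widehat G$ is then a continuous homomorphism from a dense subgroup of $H$ into a complete group, hence extends to a continuous homomorphism $f\colon H\to\widehat G$. Next, $N:=\ker f$ is a closed normal subgroup of $H$ with $N\cap G=\ker(f|_G)=\{1\}$ (since $(G,\tau')$ embeds into its completion), so essentiality of $G$ forces $N=\{1\}$ and $f$ is injective. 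Therefore the initial topology $\tau^{*}$ on $H$ induced by $f$ is a \emph{Hausdorff} group topology with $\tau^{*}\subseteq\tau_H$, and minimality of $H$ gives $\tau^{*}=\tau_H$, i.e.\ $f$ is a topological embedding; restricting $f$ to $G$ identifies $\tau_G$ with the topology $\widehat G$ induces on its subgroup $G$, which is $\tau'$. Hence $\tau'=\tau_G$ and $G$ is minimal.

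For the ``only if'' direction I would assume $G$ dense and minimal in $H$ and prove the two conclusions separately. That $H$ is minimal: given a Hausdorff group topology $\sigma\subseteq\tau_H$, minimality of $G$ applied to $\sigma|_G\subseteq\tau_G$ gives $\sigma|_G=\tau_G$, and then I would use that a Hausdorff group topology on $H$ is determined by its trace on the dense subgroup $G$ --- this holds because $\widetilde{(H,\sigma)}$ and $\widetilde{(H,\tau_H)}$ are both completions of $(G,\tau_G)$, hence identified with one group $\widetilde G$, so the continuous map $\mathrm{id}\colon(H,\tau_H)\to(H,\sigma)$ extends to a continuous self-map of $\widetilde G$ fixing the dense subgroup $G$ pointwise, necessarily $\mathrm{id}_{\widetilde G}$, whence $\sigma=\tau_H$. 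That $G$ is essential: let $N$ be a closed normal subgroup of $H$ with $N\cap G=\{1\}$ and $q\colon H\to H/N$ the quotient map; then $q|_G$ is an injective continuous homomorphism with dense image, so the initial topology it induces on $G$ is a Hausdorff group topology contained in $\tau_G$, hence equal to $\tau_G$ by minimality of $G$. Thus $q|_G\colon(G,\tau_G)\to q(G)$ is a topological isomorphism, which extends to a topological isomorphism $\widetilde{(G,\tau_G)}\to\widetilde{q(G)}$; identifying $\widetilde{(G,\tau_G)}$ with $\widetilde H$ and $\widetilde{q(G)}$ with $\widetilde{H/N}$ (by density of $G$ in $H$ and of $q(G)$ in $H/N$) and noting this isomorphism extends $q$, the canonical extension $\widetilde q\colon\widetilde H\to\widetilde{H/N}$ is a topological isomorphism; since $\widetilde q|_H=q$ is injective, $N=\ker q=\{1\}$.

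For the ``locally minimal'' version I would run the same scheme while carrying along the distinguished neighbourhood of $1$: in the ``if'' direction, choose $W$ witnessing simultaneously the local minimality of $H$ and the local essentiality of $G$, observe that the witnessing neighbourhood of local essentiality still forces $\ker f=\{1\}$, and verify that $\tau^{*}$ can be taken so that $W$ remains a $\tau^{*}$-neighbourhood of $1$, so that local minimality of $H$ applies; in the ``only if'' direction, the closed normal $N$ used to test local essentiality is small enough to lie inside the distinguished neighbourhood, so $H/N$ together with the image of that neighbourhood inherits the local-minimality data and the completion argument again yields $N=\{1\}$. I expect the only genuinely non-formal point in the plain minimal case to be the claim that a Hausdorff group topology is determined by its trace on a dense subgroup (handled by the ``two continuous maps agreeing on a dense set coincide'' observation), and the main obstacle in the locally minimal case to be exactly the bookkeeping that the distinguished neighbourhood survives each passage to a completion or a quotient.
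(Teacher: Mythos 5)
The paper does not prove this statement at all: it is quoted verbatim from the literature (the minimal case from Banaschewski, Prodanov and Stephenson \cite{B,P,St}, the local case from \cite{LocMin}), so there is no in-paper argument to compare yours against. Judged on its own, the \emph{minimal} half of your proposal is correct and complete: the completion-based argument (extend $\mathrm{id}\colon (G,\tau_G)\to(G,\tau')\hookrightarrow\widetilde{(G,\tau')}$ to $f\colon H\to\widetilde{(G,\tau')}$, kill $\ker f$ by essentiality, invoke minimality of $H$ on the initial topology; and, conversely, use minimality of $G$ on $\sigma|_G$ and on the initial topology of $q|_G$, finishing with uniqueness of completions) is exactly the classical Banaschewski--Prodanov--Stephenson proof.

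The \emph{locally minimal} half, however, is only a plan, and the two steps you defer are precisely where all the new content of the local version lives, so as written there is a gap. (1) In the ``if'' direction you cannot simply ``observe'' that local essentiality kills $N=\ker f$: the definition only applies to closed normal subgroups \emph{contained in the witnessing neighbourhood} $V$ of $1$ in $H$, and nothing you have written puts $N$ inside $V$. What you actually get from ``$U$ is a $\tau'$-neighbourhood of $1$'' is a neighbourhood $W$ of $1$ in $\widetilde{(G,\tau')}$ with $W\cap G\subseteq U$; then $f^{-1}(W)$ is $\tau_H$-open, contains $N$, and meets $G$ inside $U$, so density of $G$ gives only $N\subseteq f^{-1}(W)\subseteq \overline{f^{-1}(W)\cap G}\subseteq\overline{U}$. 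One must therefore pre-shrink the witnesses (choose $U$ with $\overline{U}\subseteq V$, and likewise $\overline{U}\subseteq U_H$ to make $U_H$ a $\tau^{*}$-neighbourhood of $1$ so that local minimality of $H$ applies); this regularity-plus-density argument is the heart of the local case and must be written out. (2) In the ``only if'' direction for local essentiality, the issue is dual: local minimality of $G$ only applies to $\tau'=$ the initial topology of $q|_G$ if the distinguished neighbourhood $U=U'\cap G$ is a $\tau'$-neighbourhood of $1$, i.e.\ if $ON\cap G\subseteq U'$ for some $\tau_H$-neighbourhood $O$ of $1$. This is exactly what dictates the choice of the witness $V$ for local essentiality: pick $O$ and $V$ with $OV\subseteq U'$, so that $N\subseteq V$ forces $ON\subseteq U'$. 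Neither step is hard, but neither is mere bookkeeping either; until they are supplied the local statement is not proved.
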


Using the fact that (locally) compact groups are (locally) minimal, one obtains the following immediate corollary: 

\begin{corollary}\label{cororllary:crit}
 A (locally) precompact group is (locally) minimal if and only if it is a (locally) essential subgroup of its completion.  
\end{corollary}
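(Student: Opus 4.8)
The plan is to derive this directly from the density criterion of Theorem~\ref{crit}, using that the completion of a (locally) precompact group is (locally) compact and that (locally) compact groups are already (locally) minimal.

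Let $G$ be a (locally) precompact group and let $\widehat{G}$ denote its completion with respect to the left uniformity. By the description recalled in the Preliminaries, $\widehat{G}$ is a (locally) compact group that contains $G$ as a dense subgroup. The first step is to note that $\widehat{G}$ is itself (locally) minimal: in the global case $\widehat{G}$ is compact, hence minimal by the implications in~\eqref{minimal:is:locally:minimal}; in the local case $\widehat{G}$ is locally compact, hence locally minimal by Fact~\ref{Last:Fact}.

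The second step is to apply Theorem~\ref{crit} with $H=\widehat{G}$. Since $G$ is dense in $\widehat{G}$, the theorem says that $G$ is (locally) minimal if and only if $\widehat{G}$ is (locally) minimal \emph{and} $G$ is (locally) essential in $\widehat{G}$. By the first step the former condition is automatic, so $G$ is (locally) minimal if and only if $G$ is (locally) essential in $\widehat{G}$, which is exactly the claim.

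There is essentially no obstacle: the corollary is a formal consequence of the results already quoted. The only point requiring (routine) care is the ``locally'' version, where one must check that the distinguished neighbourhood of $1$ witnessing local minimality of $\widehat{G}$ and the one witnessing local essentiality of $G$ in $\widehat{G}$ can be taken to be compatible; but this compatibility is precisely what is built into the parenthetical local form of Theorem~\ref{crit} from \cite{LocMin}, so no additional argument is needed.
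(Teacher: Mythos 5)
Your argument is correct and is exactly the paper's intended derivation: the completion is (locally) compact, hence (locally) minimal, and Theorem~\ref{crit} then reduces (local) minimality of $G$ to its (local) essentiality in the completion. Nothing further is needed.
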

 
The following diagram lists all implications that are necessary for our discussion {\em in the class of abelian groups\/}. The restriction to the abelian  case was chosen in order to simplify the diagram. The only change that must be carried out in the non-abelian case is to replace 
``minimal''
 by ``minimal+precompact'', which makes the arrow (PS) vacuous. The rest remains unchanged. 

\vskip0.15in
$\phantom{MM}
{\xymatrix@!0@C5.4cm@R=1.5cm{
   \mbox{discrete} \ar@{->}[d]
   &  \mbox{compact}\ar@{->}[d]\ar@{->}[dl] \ar@{->}[dr]& & & \\
  \mbox{locally compact} \ar@{->}[dr]_{(b)}\ar@{->}[d]_{(c)}& \mbox{locally minimal+precompact}\ar@{->}[d]_{(d)}\ar@{->}[dr]_{(e)}
  & \mbox{minimal} \ar@{->}[l]^{\;\;\;\;\;\;\;\;\;\;\;\;\;\;\;\;\;\;\;\;\;\;\;\;\;(a)}\ar@{->}[d]^{(PS)}& \\
   \mbox{locally minimal}
  &\mbox{locally minimal+locally precompact}  \ar@{->}[l]^{\!\!\!\!\!\!\!\!\!\!\!\!\!\!\!\!\!\!\!\!\!\!\!\!(f)} \ar@{->}[d]_{(g)}
 &  \mbox{precompact}\ar@{->}[dl]
  & \\
     &\mbox{locally precompact} \\
    } }$
\vskip0.1in

The implication (PS) and the non-trivial part of the implication (a) follow from  Theorem \ref{PS}. The remaining implications (b)--(g) are trivial.

Let us give a list of examples proving that the following implications are not reversible: 

\begin{itemize}
 \item[(a)] Any infinite  cyclic  subgroup $G$ of $\T$ will do.
Indeed, every infinite subgroup of $\T$ is dense and locally essential in $\T$, so
 locally minimal by Theorem \ref{crit}.  
Clearly, no cyclic subgroup of $\T$ is essential in $\T$, so $G$ is not minimal by Theorem \ref{crit}.  
 \item[(b)] Any minimal non-compact group $G$ will do, as such a group cannot be locally compact by Theorem \ref{Steph}, while it is  locally minimal       and locally precompact by the the composition of the  implications (a) and (d). 
 \item[(c)] Follows from the example in item (b) due to the implication (f).
 \item[(d)] The locally compact group $\R$ is not precompact, yet it is locally minimal and locally precompact by the implication (b).
  \item[(e)] The non-invertibility of this implication follows from \cite[Proposition 5.17]{LocMin}. To obtain another example,  one can simply take any dense cyclic subgroup $C$ of $\T^\omega$. Clearly, $C$ is precompact. In Remark~\ref{remark:(e)} below, we shall show that 
$C$ is not locally minimal. 
  \item[(f)] Let $E$ be any infinite dimensional Banach space. Then it is locally minimal by Fact~\ref{Last:Fact}. 
  Being complete and non-locally compact, $E$ cannot be locally precompact. 
 \item[(g)] Take the group $C$ from item (e). 
 \item[(PS)] The group from item (a) will do, due to the implication (e). 
\end{itemize}

The irreversibility of the remaining implications in the diagram is trivial.  

An infinite discrete group is locally minimal and locally precompact, yet it is not precompact. The precompact group $C$ witnessing the irreversibility of (e) is not locally minimal. This shows that the properties ``locally minimal + locally precompact'' and ``precompact'' are independent of each other.

\section{Motivation}

For every Tychonoff space one has the following implications:

\medskip 

\begin{center}
countable weight $\buildrel{(\alpha)}\over\Longrightarrow$ metrizable $\buildrel{(\beta)}\over\Longrightarrow$  Fr\' echet-Urysohn $\buildrel{(\gamma)}\over\Longrightarrow$  sequential $\buildrel{(\delta)}\over\Longrightarrow$  countable tightness. 
\end{center}

\medskip 

For compact spaces the implication ($\alpha$) is invertible, while ($\beta$) and ($\gamma$) are not invertible. It was an open question for some time whether the implication ($\delta$) is invertible, i.e., whether a compact space of countable tightness is sequential. Now it is known that the answer to this question is independent of (the axioms of) ZFC \cite{Ost, Balogh, Dow}.

For compact topological groups all four implications ($\alpha$)--($\delta$) are invertible, as was shown by Ismail \cite{I}:
\begin{theorem}
\label{Ismail:theorem}
\begin{itemize}
\item[(i)] $t(K) = w(K)$ for every compact group $K$.
\item[(ii)] $t(G) = \chi(G)$ for every locally compact group $G$.
\end{itemize}
\end{theorem}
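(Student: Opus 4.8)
The plan is to dispose of the two easy inequalities at once and concentrate on the two substantive ones. For any Tychonoff space $X$ one has $t(X)\le\chi(X)\le w(X)$, so in (i) the bound $t(K)\le w(K)$ and in (ii) the bound $t(G)\le\chi(G)$ are automatic; the content is $w(K)\le t(K)$ for compact groups and $\chi(G)\le t(G)$ for locally compact groups. I will use freely that tightness is monotone under passage to (in particular, closed) subspaces, and that in a topological group character and tightness are attained at the identity by homogeneity. First I would reduce (i) to a statement about pseudocharacter. For a compact Hausdorff space $\chi(x,X)=\psi(x,X)$ at every point — finite intersections of slightly shrunk members of a pseudobase at $x$ form a neighbourhood base, by regularity and compactness — so $\chi(K)=\psi(K)$; and $\chi(K)=w(K)$ comes from the group structure, since $K=\varprojlim_{i\in I}K/N_i$ is a projective limit of compact second countable (compact Lie) groups over a downward directed poset $I$ with $\bigcap_iN_i=\{1\}$, each $N_i$ is a closed $G_\delta$ subgroup, and $\{N_i\}$ is a neighbourhood base at $1$ (every neighbourhood of $1$ contains some $N_i$, by compactness), so both $w(K)$ and $\psi(K)$ equal, up to a factor $\omega$, the least cardinality of a subfamily of $\{N_i\}$ with trivial intersection. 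Hence it suffices to show that $t(K)\le\tau$ forces $\psi(K)\le\tau$.

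The heart of the matter is the contrapositive: assuming $t(K)\le\tau<\psi(K)$, produce a contradiction. By the reduction above, no subfamily of $\{N_i\}$ of cardinality $\le\tau$ has trivial intersection. I would invoke Arhangel'skii's theorem that the tightness of a compact space equals the supremum of the lengths of its free sequences, so it is enough to build a free sequence in $K$ of length $\tau^+$. Build it by transfinite recursion on $\alpha<\tau^+$: maintaining a growing set of ``used'' indices, let $P_\alpha$ be the intersection of the $N_i$ used before stage $\alpha$ — a nontrivial closed subgroup, since at most $\tau$ of the $N_i$ are involved and $\psi(K)>\tau$ — choose $x_\alpha\in P_\alpha\setminus\{1\}$, and then enlarge the used set by some index $i_\alpha$ with $x_\alpha\notin N_{i_\alpha}$ (possible because $\bigcap_iN_i=\{1\}$). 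Then the tail $\{x_\beta:\beta\ge\alpha\}$ lies in the closed subgroup $P_\alpha$, while each point of the head $\{x_\beta:\beta<\alpha\}$ lies outside $P_\alpha$; freeness follows once one knows that the \emph{closure} of every head still misses $P_\alpha$.

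Guaranteeing this last point — especially at limit stages $\alpha$, where the head may acquire new cluster points falling into $P_\alpha$ — is the main obstacle. It is handled by choosing the $x_\alpha$ coherently: one arranges that the image of $x_\alpha$ in every later quotient $K/P_{\alpha'}$ lands in a prescribed closed ``forbidden region'' disjoint from the identity coset. This is possible because the bonding homomorphisms between the quotients are surjective (so preimages of points are cosets of nontrivial subgroups, leaving room to steer) and because such forbidden regions are stable under the inverse limits that occur at limit stages; the guiding model is a solenoid $\varprojlim(\T\xleftarrow{\,z\mapsto z^2\,}\T)$ of length $\tau^+$, where one keeps each coordinate of $x_\alpha$ in the left half-circle $\{\operatorname{Re}z\le 0\}$, a closed set avoiding $1$ which is preserved both when passing to a square root and when taking limits. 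The resulting free sequence of length $\tau^+$ contradicts $t(K)\le\tau$, so $\psi(K)\le t(K)$, and with the first paragraph $t(K)=w(K)$, which is (i).

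Finally, (ii) reduces cleanly to (i). By the Gleason–Yamabe structure theorem a locally compact group $G$ has an open subgroup $H$ containing a compact normal subgroup $N$ with $H/N$ a Lie group; since $H$ is open, $\chi(G)=\chi(H)$, and since $N$ is compact and $H/N$ is first countable, $\chi(H)=\chi(N)\cdot\chi(H/N)=\chi(N)\cdot\omega$. If $\chi(N)\le\omega$ then $\chi(G)=\omega\le t(G)$. If $\chi(N)>\omega$, then $N$ is a non-metrizable compact group, so by (i) (and the first paragraph) $\chi(N)=w(N)=t(N)$; as $N$ is a closed subspace of $G$, monotonicity of tightness gives $t(N)\le t(G)$, whence $\chi(G)=\chi(N)=t(N)\le t(G)$. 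Either way $\chi(G)\le t(G)$, which is (ii). The only genuinely hard step is the limit-stage-stable, coherent construction of a long free sequence in the third paragraph; everything else is the structure theory of (locally) compact groups together with standard cardinal-function inequalities.
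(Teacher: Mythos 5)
The paper does not prove this theorem at all --- it is quoted from Ismail \cite{I} --- so your proposal has to stand on its own. Your preliminary reductions are all sound: $t\le\chi\le w$ is free; $\chi(K)=\psi(K)=w(K)$ for compact groups via the projective limit over Lie quotients; Arhangel'ski\u\i's theorem $t(X)=fs(X)$ for compacta legitimately reduces the hard inequality of (i) to producing a free sequence of length $\tau^+$; and the deduction of (ii) from (i) via van Dantzig/Gleason--Yamabe and the heredity of tightness is correct. The problem is that the free-sequence construction --- which you yourself identify as the only genuinely hard step --- cannot work as designed, and the difficulty is not one of bookkeeping but of principle.

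Here is the obstruction. You insist that $x_\beta\in P_\beta$ for every $\beta$, where the $P_\beta$ are closed subgroups decreasing along the recursion and $P_\alpha=\bigcap_{\beta<\alpha}P_\beta$ at limit stages. Fix a limit ordinal $\alpha<\tau^+$. By compactness of $K$, the net $(x_\beta)_{\beta<\alpha}$ has a cluster point $y$ along a subnet cofinal in $\alpha$; for each $\beta_0<\alpha$ that subnet is eventually inside the closed set $P_{\beta_0}$, so $y\in P_{\beta_0}$, whence $y\in\bigcap_{\beta_0<\alpha}P_{\beta_0}=P_\alpha$. Since $y$ also lies in the closure of the head $\{x_\beta:\beta<\alpha\}$, the condition you declare sufficient for freeness --- ``the closure of every head misses $P_\alpha$'' --- is unattainable for \emph{any} choice of the points $x_\beta$ subject to $x_\beta\in P_\beta$. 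The ``forbidden region'' patch cannot repair this: passing to $K/P_\alpha$, the image of $x_\beta$ lies in the compact subgroup $P_\beta/P_\alpha$, and these subgroups decrease to the identity as $\beta\to\alpha$ (the quotient map is closed, so $\bigcap_{\beta<\alpha}P_\beta/P_\alpha=\{1\}$); hence the images of the head accumulate at the identity of $K/P_\alpha$ and no closed set avoiding the identity can contain them all. Even your guiding solenoid model violates the coherence you ask for: the coordinates of $x_\beta$ below $\beta$ equal $1$, which is not in the left half-circle. Already for $K=\Z(2)^{\omega_1}$ your scheme fails, and the actual free sequence there, $x_\alpha=\chi_{[0,\alpha]}$, does \emph{not} have its tails confined to a shrinking chain of subgroups --- the separation is achieved by a single coordinate, with ``supports'' growing rather than escaping. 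A correct proof of the hard direction must be organized this way: either embed a Cantor cube $\{0,1\}^{w(K)}$ into $K$ (Kuz'minov's dyadicity theorem plus the Efimov--Gerlits theorem) and use the free sequence $(\chi_{[0,\alpha]})_\alpha$ there, or argue as this paper effectively does for its Theorem \ref{corollary:A*} (of which part (i) is the special case $G=K$): use the structure theorem (Theorem \ref{structure:theorem}) to find a closed subgroup of the form $\prod_{p}\Z(p)^{\sigma_p}\times\prod_p\Z_p^{\kappa_p}$ of full weight and then exhibit a direct tightness witness as in Lemma \ref{uniform:proof} --- a point $y\in \overline{M}$ with $y\notin\overline{A}$ for every $A\subseteq M$ of small cardinality --- which bypasses free sequences entirely.
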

Item (ii) of this theorem shows that implications ($\beta$)--($\delta$) are invertible even for locally compact groups. Clearly, an uncountable discrete group witnesses the fact that the implication $(\alpha$) is not invertible for locally compact groups.

From Theorem \ref{Ismail:theorem}(i) one easily concludes that both implications ($\gamma$) and ($\delta$) are invertible for $\omega$-bounded groups, while the next example shows that the implication ($\beta$) is not invertible even in the abelian case.

\begin{example}\label{example1}  Let $\T$ be the circle group and $\kappa$ be an uncountable cardinal. By Fact \ref{sigma-product:fact}, the group $G=\Sigma_\kappa(\T)$
from Definition \ref{sigma:product} is $\omega$-bounded, Fr\' echet-Urysohn, abelian and satisfies $\chi(G)=w(G)=\kappa$; in particular, $G$ is not metrizable.
The group $G$ is connected by item (i) of Fact \ref{sigma-product:fact}. To obtain  a zero-dimensional  example with similar properties, one can take $G=\Sigma_\kappa(F)$, where $F$ is any finite abelian group; this follows from Fact \ref{sigma-product:fact}(ii).
\end{example}

Even the implication ($\delta$) need not be invertible for countably compact abelian groups, as numerous consistent examples of hereditarily separable countably compact abelian groups without non-trivial convergent sequences demonstrate.  It is consistent with ZFC that the implication $(\gamma$) is invertible for countably compact groups  \cite{Shibakov}.

The following result shows that locally minimal groups have at least one property in  common   with compact spaces:
\begin{theorem}
\label{weight:net-weight} $nw(G)=w(G)$ for every locally minimal group $G$.
\end{theorem}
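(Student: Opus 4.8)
The plan is to reduce to the compact case via the criterion of local minimality, then transfer the network-weight equality along dense subgroups using the standard behaviour of $nw$ and $w$ under density. Let $G$ be a locally minimal group; we always have $nw(G)\le w(G)$, so only the reverse inequality needs proof. First I would observe that it suffices to treat the case when $G$ is also locally precompact: indeed, $nw(G)=w(G)$ is a purely topological statement, and we may replace $G$ by its quotient or pass to suitable subgroups — but in fact the cleanest route is to note that a group with a countable network is separable and metrizable (hence second countable), so the claim is trivial when $nw(G)=\omega$; thus we may assume $nw(G)>\omega$ and argue that then $w(G)=nw(G)$ as well. Actually, the sharpest approach is: a group $G$ with $nw(G)=\kappa$ is Lindelöf-type small enough that its two-sided completion $\widehat G$ satisfies $nw(\widehat G)=nw(G)=\kappa$, because $G$ is dense in $\widehat G$ and network weight does not increase when passing to a dense subspace nor to the whole space in this situation (for topological groups, $nw$ of a dense subgroup equals $nw$ of the group).

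The key steps, in order: (1) Recall that $nw(D)=nw(X)$ whenever $D$ is dense in $X$ — more precisely, $nw(X)\le nw(D)$ for $D$ dense, which is the direction we need. (2) If $G$ is locally minimal, by Corollary~\ref{cororllary:crit} applied after first reducing to the precompact situation, or directly by Theorem~\ref{crit}, $G$ embeds densely in a locally minimal group $H$; the point is to arrange that $H$ is locally compact. The honest way: a locally minimal group need not be locally precompact, so instead I would use the following: for any topological group $G$, $nw(G)=nw(G/\{1\})$ trivially, and the statement $nw=w$ is known to hold for every topological group with a $G_\delta$-diagonal or, more to the point, the result we want is a consequence of the general fact that $nw(G)=w(G)$ for every \emph{locally minimal} group because local minimality forces $G$ to embed as a locally essential dense subgroup of its completion $\widehat G$ when $G$ is \emph{locally precompact}, and for general locally minimal $G$ one shows $G$ still has the property that its topology is determined by a base of size $nw(G)$. (3) Combine: $w(G)\le w(H)=nw(H)=nw(G)$ where the middle equality is Ismail/classical for (locally) compact $H$ and the outer ones come from density.

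Honestly, the main obstacle is that a locally minimal group is \emph{not} in general locally precompact (the paper's own diagram, implication (f), and the Banach space example show this), so one cannot simply invoke "$G$ is dense and locally essential in a locally compact group." The real content must therefore be a direct argument that for \emph{any} topological group, $w(G)\le nw(G)$ fails in general but becomes true under local minimality by a completion argument that does \emph{not} require local precompactness: one takes the Raĭkov completion $\widetilde G$ (which always exists and always contains $G$ densely), notes $nw(\widetilde G)=nw(G)$, and then uses local minimality of $G$ together with Theorem~\ref{crit} to conclude $\widetilde G$ is locally minimal; but the decisive extra input is that a \emph{complete} locally minimal group has a neighbourhood $U$ of $1$ such that the topology restricted near $U$ is already determined "rigidly," which one leverages to show $\chi(\widetilde G,1)\le nw(\widetilde G)$ and hence $w(\widetilde G)=\chi(\widetilde G)\cdot nw(\widetilde G)\le nw(G)$. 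Then $w(G)\le w(\widetilde G)\le nw(G)\le w(G)$ closes the loop. I expect the delicate point to be justifying $\chi(\widetilde G,1)\le nw(\widetilde G)$ from local minimality — this is where one must unwind the definition of locally minimal and produce, from a network of size $\kappa$, a local base of size $\kappa$ at the identity, presumably by considering the coarser group topology generated by the given network-derived neighbourhoods and invoking the rigidity clause in the definition of local minimality.
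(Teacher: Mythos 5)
The paper does not actually prove this theorem: it quotes the minimal case from Arhangel'ski\u{\i} \cite{Arh} and the locally minimal case from \cite[Theorem 2.8]{LocMin}, so your attempt has to be measured against those arguments. Your proposal has a genuine gap exactly at the point you yourself flag as ``delicate'': the entire content of the theorem is the inequality $\chi(G)\le nw(G)$ for locally minimal $G$ (once one has it, the standard identity $w(G)=\chi(G)\cdot d(G)$ for topological groups together with $d(G)\le nw(G)$ gives $w(G)\le nw(G)$ inside $G$ itself, with no completion needed). You assert that this follows ``presumably by considering the coarser group topology generated by the network-derived neighbourhoods and invoking the rigidity clause,'' but that construction \emph{is} the proof, not a footnote to it: one must exhibit, from a network $\mathcal N$ with $|\mathcal N|=nw(G)$, a family of at most $nw(G)$ neighbourhoods of $1$ that generates a coarser \emph{Hausdorff group} topology $\mathscr{T}'\subseteq\mathscr{T}$ in which the distinguished set $U$ from the definition of local minimality is still a $\mathscr{T}'$-neighbourhood of $1$. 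Verifying that the candidate family satisfies the group-topology axioms, separates points, and keeps $U$ a neighbourhood is where all the work lies, and none of it appears in your sketch.

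Moreover, two auxiliary facts you lean on are false. First, $nw(X)\le nw(D)$ for $D$ dense in $X$ fails even for topological groups: a countable dense subgroup $D$ of $\{0,1\}^{\mathfrak c}$ has $nw(D)=\omega$, while its completion is compact of weight $\mathfrak c$ and hence has network weight $\mathfrak c$. So the step $nw(\widetilde G)=nw(G)$ in your final chain is unjustified; worse, for precompact $G$ that equality is \emph{equivalent} to the theorem (since $nw(\widetilde G)=w(\widetilde G)=w(G)$ for compact $\widetilde G$), so assuming it is circular. Second, the same group $D$ refutes your claim that a group with a countable network is separable and metrizable: $D$ is separable but has weight $\mathfrak c$. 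Neither error is fatal to the overall strategy, because the correct route never leaves $G$: prove $\chi(G)\le nw(G)$ directly by the construction indicated above and combine it with $w(G)=\chi(G)\cdot d(G)\le\chi(G)\cdot nw(G)$. But as written, both your reduction to the completion and the key estimate are missing.
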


For minimal groups, Theorem \ref{weight:net-weight} is a classical result of Arhangel'ski\u\i\ \cite{Arh}, while its locally minimal version is more recent \cite[Theorem 2.8]{LocMin}.

Locally minimal abelian groups of countable pseudocharacter are metrizable \cite{LocMin}.  As was proved independently  by Guran \cite{Gu}, Pestov \cite{Pe} and Shakhmatov \cite{Sh1}, non-abelian minimal groups of countable pseudocharacter need not  be metrizable. In fact, the gap between the character and the pseudocharacter of a minimal group can be arbitrarily large \cite{Sh1}. 

The above discussion suggests that minimality may be an appropriate  compactness-like property to consider in order to invert implications ($\alpha$)--($\delta$), but it becomes also  clear that perhaps some sort of commutativity should be assumed. Indeed, we prove that all four implications ($\alpha$)--($\delta$) are simultaneously invertible for minimal abelian groups and that most of these implications cannot be inverted for general minimal groups. Some of our results hold for locally minimal groups as well.

\section{Coincidence of tightness and character in locally minimal abelian groups}
\label{sec:4}

For every topological space $X$ one has $t(X)\leq \chi(X)$. In this section we discuss our results concerning the coincidence of these two invariants for locally minimal abelian groups. 

The next theorem is one of our main results: 

\begin{theorem}
\label{corollary:A*}
\label{theorem:A*}
If $G$ is a locally essential subgroup of a locally \cag\ $K$, then $\chi(K)\leq t(G)$. Moreover, if $K$ is compact, then $w(K)\leq t(G)$.
\end{theorem}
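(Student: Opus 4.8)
The plan is to reduce the claim about $t(G)$ to a statement about the local weight of $K$, using local essentiality to transport the relevant topological data from $K$ down to $G$. First I would fix a neighborhood $V$ of $1$ in $G$ witnessing local essentiality: every closed normal subgroup $N$ of $K$ with $N\subseteq \overline{V}^{\,K}$ and $N\cap G=\{1\}$ must be trivial. Replacing $K$ by a suitable open subgroup (a compact neighborhood of $1$, using that $K$ is locally compact), I can assume without loss of generality that $\chi(K)=\psi(K)$ coincides with the weight of a compact neighborhood of $1$, so the target becomes: $w(U)\le t(G)$ for a compact $U\ni 1$ in $K$. Set $\kappa=t(G)$ and suppose toward a contradiction that $w(U)>\kappa$. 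Then the compact group $U$ (or its completion, a compact group of weight $>\kappa$) admits, via Pontryagin-type or structural arguments, a continuous surjection onto a compact group of weight $\kappa^+$; equivalently $U$ contains a compact normal subgroup $N$ of $K$ that is "large" — of weight $\kappa^+$ — on which $G$ would have to be dense and essential.

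Here is the crux. Because $G$ is locally essential in $K$, the quotient map $q\colon K\to K/N$ restricted to $G$ has the property that $qG$ is dense in $K/N$ and, more importantly, that $N\cap G$ cannot be too small — in fact $N\cap G$ is essential in $N$, hence dense in $N$ (using Theorem~\ref{crit} localized inside $U$). So $N\cap G$ is a dense subgroup of a compact group of weight $\kappa^+$. Now I invoke the standard fact that a dense subgroup $D$ of a compact group of weight $\lambda$ has tightness at least... — more precisely, one uses that in a compact group $K$ of weight $\lambda$ one can find a point in the closure of a set $A\subseteq D$ that is not in the closure of any subset of $A$ of size $\le\lambda'$ for $\lambda'<\lambda$; a convenient way is to build a free-like set of characters and a corresponding subset of $N\cap G$ witnessing $t(N\cap G)\ge \kappa^+$. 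Since $N\cap G\subseteq G$ and tightness is monotone under taking subspaces only in the reverse direction, I instead argue that a point of high tightness in $N\cap G\subseteq G$ forces $t(G)\ge\kappa^+$, contradicting $t(G)=\kappa$. The main obstacle is precisely this last inference: tightness does not pass to subgroups in general, so I must either choose $N$ so that $N\cap G$ is a \emph{retract} or a quotient-friendly subgroup of $G$, or (more robustly) run the tightness-witnessing construction directly inside $G$ using the characters of $K$ that separate points of $N$, so that the non-trivially-convergent-to its-closure-point configuration lives in $G$ itself rather than only in $N\cap G$.

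To make that robust argument precise, I would take a family $\{\chi_\alpha:\alpha<\kappa^+\}$ of continuous characters of $K$ (restricted from $\widehat{U}$, using $w(U)=|\widehat{U}|>\kappa$) that is "independent" in the sense that for every $\alpha$ the character $\chi_\alpha$ is not in the closed subgroup generated by the others; density and local essentiality of $G$ let me pick, for each $\alpha$, an element $g_\alpha\in G\cap V$ on which $\chi_\alpha$ takes a value bounded away from $1$ while all $\chi_\beta$, $\beta$ in any prescribed set of size $\le\kappa$, are close to $1$. The set $A=\{g_\alpha:\alpha<\kappa^+\}$ then has $1$ in its closure in $G$ (every basic neighborhood, cut out by $\le\kappa$ of the $\chi_\beta$'s together with the local-minimality neighborhood, meets $A$), but $1$ is not in the closure of any $\kappa$-sized subfamily. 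That exhibits $t(G)\ge\kappa^+$, the desired contradiction, proving $\chi(K)\le t(G)$; the "moreover" for compact $K$ follows because then $U$ may be taken to be all of $K$, so $w(K)=w(U)\le t(G)$. The delicate point throughout is verifying that the neighborhoods of $1$ in $G$ are genuinely controlled by finitely-or-countably-many characters up to the local-minimality witness $V$ — i.e.\ that local essentiality really does force $G$'s topology near $1$ to be the subspace topology from $K$ tightly enough for the closure computation — and this is where I expect to spend the most care.
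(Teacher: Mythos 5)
Your overall shape (reduce to a compact piece of $K$, extract a large structured subgroup, and use local essentiality to manufacture a subset of $G$ witnessing large tightness) matches the paper's strategy, but several individual steps have genuine gaps. First, the assertion that $N\cap G$ is ``essential in $N$, hence dense in $N$'' is false: essential subgroups need not be dense (e.g.\ $(p\Z_p)^{I}$ is a closed, proper, non-dense essential subgroup of $\Z_p^{I}$), and the correct argument never uses density --- it uses essentiality only to pick one non-zero element $x_i$ of $G$ in each coordinate subgroup and, crucially, one non-zero element $y=\eta x\in G$ of the closed subgroup generated by the ``diagonal'' point $x=(x_i)$. Second, and more seriously, your closure computation cannot work with $1$ as the limit point: to show $1\notin \mathrm{cl}_G(B)$ for a $\kappa$-sized subfamily $B\subseteq A$ you must exhibit a \emph{single} neighbourhood of $1$ --- hence one cut out by finitely many characters --- missing all $\kappa$ elements of $B$, whereas each $g_\alpha$ is kept away from $1$ only by its own character $\chi_\alpha$. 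The paper's Lemma \ref{uniform:proof} circumvents exactly this by taking the witness point to be $y$ (all of whose coordinates are non-zero) rather than $1$: any subset of $\bigoplus_{i}\langle x_i\rangle$ of size $<|I|$ has support missing some coordinate $i_0$, so the single coordinate condition ``$z_{i_0}$ near $y_{i_0}\neq 0$'' separates $y$ from its closure.

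Two further points. The existence of $w(K)$-many independent characters together with a compatible supply of elements of $G$ is not free; it is precisely the content of the structure theorem (Theorem \ref{structure:theorem}), which supplies a closed subgroup $H\cong \prod_{p}\Z(p)^{\sigma_p}\times\prod_{p}\Z_p^{\kappa_p}$ of full weight, after which the argument proceeds coordinatewise via Lemmas \ref{claim1}, \ref{claim2} and \ref{uniform:proof}; your proposal assumes this product structure implicitly. (Relatedly, a continuous surjection of $U$ onto a compact group of weight $\kappa^{+}$ is dual to a subgroup of $\widehat{U}$, i.e.\ to a \emph{quotient} of $K$, not to a closed subgroup $N\subseteq U$ of weight $\kappa^{+}$; your ``equivalently'' conflates the two.) Finally, your worry that tightness does not pass to subgroups is unfounded: tightness is monotone under arbitrary subspaces ($t(Y)\le t(X)$ whenever $Y\subseteq X$), which is exactly why one may conclude $t(G)\ge t(G\cap M_p)$ with no retract or quotient device. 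The reduction from locally compact to compact is the one step you have essentially right, via $K\cong\R^{n}\times K_0$ with $K_0$ containing a compact open subgroup $C$ and Lemma \ref{claim1} applied to $G\cap C$.
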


The proof of this theorem will be given in \S \ref{Proofs}. 

\begin{corollary} A locally compact abelian group containing a locally essential subgroup of countable tightness is metrizable.
\end{corollary}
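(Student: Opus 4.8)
The plan is to obtain this statement as an immediate consequence of Theorem~\ref{theorem:A*}. Let $K$ be a locally compact abelian group containing a locally essential subgroup $G$ with $t(G)=\omega$. Since $K$ is a locally compact abelian group and $G$ is locally essential in it, Theorem~\ref{theorem:A*} applies verbatim and gives $\chi(K)\le t(G)=\omega$; as all cardinal invariants are assumed to be infinite, this means $\chi(K)=\omega$, i.e.\ $K$ is first countable.

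It then remains only to pass from first countability to metrizability, and for this I would invoke the Birkhoff--Kakutani metrization theorem: every first-countable Hausdorff topological group is metrizable. Applying it to $K$ completes the argument. Note that the local compactness of $K$ enters only to make Theorem~\ref{theorem:A*} available; the metrization step itself uses nothing beyond first countability together with the group structure.

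There is essentially no obstacle here, since all the substantive work is carried by Theorem~\ref{theorem:A*}, whose proof is deferred to \S\ref{Proofs}. The only points worth keeping in mind are that ``locally essential subgroup'' is precisely the hypothesis of that theorem (no density assumption on $G$ is needed, cf.\ Definition~\ref{def:essential:local:essential}), and that the stronger clause ``$w(K)\le t(G)$ when $K$ is compact'' would, in the special case of compact $K$, yield the sharper conclusion that $K$ has countable weight; this is consistent with, but not required for, the corollary as stated.
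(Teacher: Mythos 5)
Your proof is correct and is precisely the argument the paper intends (the corollary is stated without proof as an immediate consequence of Theorem~\ref{theorem:A*}): apply the theorem to get $\chi(K)\le t(G)=\omega$, then use the Birkhoff--Kakutani metrization theorem for first-countable topological groups. Nothing further is needed.
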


\begin{corollary}\label{corollary:t=chi} 
Let $G$ be an abelian topological group.
\begin{itemize}
  \item[(i)] If $G$ is locally minimal and locally precompact, then $t(G)=\chi(G)$. 
  \item[(ii)] If $G$ is locally minimal and precompact, then $w(G)=t(G)$.
  \item[(iii)] If $G$ is minimal, then $w(G)=t(G)$.
\end{itemize}
\end{corollary}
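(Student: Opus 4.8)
The plan is to obtain all three statements as routine consequences of Theorem~\ref{theorem:A*} and Corollary~\ref{cororllary:crit}, the only extra ingredients being the elementary inequalities $t(X)\le\chi(X)\le w(X)$, valid in every topological space, together with the monotonicity of $\chi$ and $w$ under passage to subspaces.

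For item (i), I would proceed as follows. Let $K$ be the completion of $G$ with respect to its (two-sided $=$ left) uniformity. Since $G$ is locally precompact, $K$ is a locally \cag, and $G$ is a dense subgroup of $K$; since $G$ is also locally minimal, Corollary~\ref{cororllary:crit} tells us that $G$ is a locally essential subgroup of $K$. Theorem~\ref{theorem:A*} then yields $\chi(K)\le t(G)$. On the other hand $G$ carries the subspace topology inherited from $K$, so $\chi(G)\le\chi(K)$, while always $t(G)\le\chi(G)$. Chaining these, $t(G)\le\chi(G)\le\chi(K)\le t(G)$, so in particular $t(G)=\chi(G)$.

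Item (ii) is the same argument with ``compact'' in place of ``locally compact''. If $G$ is precompact (hence locally precompact) and locally minimal, then its completion $K$ is a \cag\ in which $G$ sits densely and, by Corollary~\ref{cororllary:crit}, as a locally essential subgroup; the ``moreover'' clause of Theorem~\ref{theorem:A*} now gives $w(K)\le t(G)$. Since $w$ is monotone under subspaces, $w(G)\le w(K)$, and $t(G)\le\chi(G)\le w(G)$, whence $t(G)\le w(G)\le w(K)\le t(G)$ and $w(G)=t(G)$. Finally, item (iii) reduces immediately to (ii): a minimal abelian group is precompact by Theorem~\ref{PS} and locally minimal by \eqref{minimal:is:locally:minimal}, so it satisfies the hypotheses of (ii).

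I do not expect any genuine obstacle at the level of this corollary — all the substance is concentrated in Theorem~\ref{theorem:A*}, whose proof is postponed to \S\ref{Proofs}. The only points deserving a line of justification are the identification of the completion of a (locally) precompact abelian group with a (locally) compact group containing $G$ as a dense subgroup, and the translation of ``(locally) minimal $+$ (locally) precompact'' into ``(locally) essential in the completion'' via Corollary~\ref{cororllary:crit}; both are standard and were recorded in Section~\ref{sec:2}.
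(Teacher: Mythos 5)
Your proposal is correct and follows essentially the same route as the paper: pass to the completion $K$, use Theorem~\ref{crit} (equivalently, Corollary~\ref{cororllary:crit}) to see that $G$ is locally essential in the (locally) compact group $K$, apply Theorem~\ref{theorem:A*} to get $\chi(K)\le t(G)$ (resp.\ $w(K)\le t(G)$), and close the chain with the trivial inequalities; item (iii) reduces to (ii) via Theorem~\ref{PS}. No gaps.
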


\begin{proof} To prove (i) and (ii), assume that  $G$ is locally  \mi \ and (locally) precompact. Then the completion $K$ of $G$ is a (locally) compact abelian group. Moreover, by Theorem \ref{crit}, $G$ is locally essential in $K$. Now  the main assertion of Theorem \ref{corollary:A*} yields $\chi(G)\le \chi(K)\leq t(G)$ in case (i), and  the second assertion of Theorem \ref{corollary:A*} yields $w(G)\le w(K)\leq t(G)$ in case (ii). The converse inequalities are trivial.

(iii) follows from \eqref{minimal:is:locally:minimal}, Theorem \ref{PS} and (ii).
\end{proof}

 Since $t(X)\le nw(X)$ for every space $X$ by \cite[Exercise 3.12.7(e),(f)]{Eng}, Corollary \ref{corollary:t=chi}(iii)  strengthens Theorem \ref{weight:net-weight} for abelian groups.

We do not know whether the hypothesis ``locally precompact'' can be omitted in Corollary \ref{corollary:t=chi}(i); see Question \ref{question:0}. 
    
 \begin{corollary}\label{corollary:ct> metrizable}
\begin{itemize}
  \item[(i)] A locally minimal, locally precompact abelian group of countable tightness is metrizable.
  \item[(ii)] Every \mi {} abelian group of \ct \ is metrizable.
\end{itemize}

In particular, 
\begin{itemize}
  \item[(iii)] sequential (or Fr\' echet-Urysohn) locally minimal and locally precompact abelian groups are metrizable.
  \item[(iv)] sequential (or Fr\' echet-Urysohn) \mi {} abelian groups are metrizable.
\end{itemize}
\end{corollary}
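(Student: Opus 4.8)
The plan is to read off all four items directly from Corollary~\ref{corollary:t=chi}, the only additional ingredient being the classical Birkhoff--Kakutani theorem: a Hausdorff topological group is metrizable if and only if it is first countable. I keep in mind the blanket convention that all cardinal invariants are infinite, so that ``countable tightness'' means exactly $t(G)=\omega$, and ``first countable'' means $\chi(G)=\omega$.

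For (i): assume $G$ is locally minimal, locally precompact, abelian, with $t(G)=\omega$. Corollary~\ref{corollary:t=chi}(i) gives $\chi(G)=t(G)=\omega$, so $G$ is first countable and hence metrizable by Birkhoff--Kakutani. For (ii): a minimal abelian group $G$ is locally minimal by \eqref{minimal:is:locally:minimal} and precompact --- hence locally precompact --- by Theorem~\ref{PS}, so (i) applies; equivalently, Corollary~\ref{corollary:t=chi}(iii) yields $w(G)=t(G)=\omega$, so $G$ is second countable and metrizable by the Urysohn metrization theorem. For (iii) and (iv): every Fr\'echet--Urysohn space is sequential and every sequential space has countable tightness (these are the implications $(\gamma)$ and $(\delta)$ displayed in the Motivation section), so a sequential (a fortiori Fr\'echet--Urysohn) group satisfying the hypotheses of (i), respectively (ii), is metrizable by (i), respectively (ii).

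I do not anticipate a real obstacle, since the entire content of the corollary is already contained in Theorem~\ref{theorem:A*} and Corollary~\ref{corollary:t=chi}. The two points deserving a line of care are: the passage from $\chi(G)=\omega$ to metrizability genuinely uses the group structure (a first-countable Tychonoff space need not be metrizable), which is why Birkhoff--Kakutani rather than a general metrization theorem is the right tool; and one must feed the matching hypothesis into the correct clause of Corollary~\ref{corollary:t=chi} --- ``locally precompact'' for the character identity in (i), and precompactness (automatic from minimality via Theorem~\ref{PS}) for the weight identity used in (ii).
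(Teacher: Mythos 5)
Your proposal is correct and is exactly the derivation the paper intends: the corollary is stated without proof precisely because it follows immediately from Corollary~\ref{corollary:t=chi} together with Birkhoff--Kakutani (for (i)) and the implications sequential $\Rightarrow$ countable tightness (for (iii),(iv)). Your added care about feeding ``locally precompact'' into clause (i) versus precompactness (via Theorem~\ref{PS}) into clause (iii) of Corollary~\ref{corollary:t=chi} is exactly the right bookkeeping.
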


Item (ii) of the above corollary provides a positive answer to a question of Oleg Okunev set in 2007. It is worth emphasizing that even item (iv) of Corollary \ref{corollary:ct> metrizable} is completely new.

The reason to pay special attention to items (iii) and (iv) of the above corollary comes from the fact that they treat the invertibility of the implication 
($\beta$). This issue has been faced by the so called Malykhin's problem: are countable Fr\' echet-Urysohn groups metrizable? Here countable can be replaced by separable. Many  papers have been dedicated to Malykhin's problem,  see \cite{MT,Sh2,Sh3} for more detail.  Recently, a model of ZFC in which every countable (and hence every separable) Fr\' echet-Urysohn group is metrizable was constructed in  \cite{HRG}. Here we see that replacing  separability (countability) by minimality allows us to invert the implications  ($\beta$) and ($\gamma$) in ZFC.

Recall that a topological space $X$ is called {\em radial\/} provided that, for every point $x$ in the closure of a subset $A$ of $X$, one can find a well-ordered set $(D,\le)$ and an indexed set $(a_d)_{d\in D}$ of points of $A$ such that the net $(a_d)_{d\in D}$ converges to $x$; the latter means that for every open neighbourhood $U$ of $x$ there exists $c\in D$ (depending on $U$) such that $a_d\in U$ whenever $d\in D$ and $c\le d$. Since every well-ordered set is order isomorphic to some ordinal, without loss of generality, one can assume the well-ordered set $(D,\le)$ in this definition to be an ordinal. 

Clealry, radiality of a space is a generalization of its Fr\' echet-Urysohn property.

\begin{theorem}
\label{theorem:B*} A \cag \ containing a  locally essential radial subgroup is metrizable.
\end{theorem}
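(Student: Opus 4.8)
The plan is to assume that $K$ is not metrizable and to derive a contradiction with the radiality of $G$.

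\textbf{Reduction to the dense case, and to weight $\omega_1$.} By Theorem~\ref{theorem:A*}, $w(K)\le t(G)$; since $G\subseteq K$ gives $t(G)\le w(G)\le w(K)$, all three coincide: $t(G)=w(G)=w(K)$. Hence the closure $\overline G$ of $G$ in $K$ is a compact abelian group with $w(\overline G)=w(K)$, so replacing $K$ by $\overline G$ affects neither the metrizability of $K$ nor the hypotheses on $G$ (closed subgroups of $\overline G$ are closed in $K$, so $G$ stays locally essential), while making $G$ dense; by Theorem~\ref{crit}, $G$ is then locally minimal. Suppose, for contradiction, $\kappa:=w(K)>\omega$; then $t(G)=\kappa>\omega$. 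Passing once more to a closed subgroup of $K$ of weight $\omega_1$ and to its trace in $G$ (which is again dense, locally minimal and radial, by the same reductions together with Theorem~\ref{theorem:A*}), we may assume $\kappa=\omega_1$.

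\textbf{A transfinite sequence with rigid index structure.} Since $t(G)>\omega$, fix a faithfully indexed set $A=\{x_\xi:\xi<\omega_1\}\subseteq G$ and a point $p\in\overline A^{\,G}$ with $p\notin\overline B^{\,G}$ for every countable $B\subseteq A$; translating by $-p$ (a self-homeomorphism of $G$) we may take $p=0$, and then $0\notin A$. By radiality there are an ordinal $\lambda$ and a net $(a_\delta)_{\delta<\lambda}$ in $A$ with $a_\delta\to 0$; a countable cofinal subsequence would contradict the choice of $A$, so $\operatorname{cf}(\lambda)$ is uncountable and we take $\lambda=\operatorname{cf}(\lambda)$ regular. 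Write $a_\delta=x_{g(\delta)}$. For each $\eta<\omega_1$ the set $\{x_\xi:\xi<\eta\}$ is countable, hence $0$ is not in its closure, so the net eventually misses it and $g^{-1}([0,\eta))$ is bounded in $\lambda$; as $\lambda=\bigcup_{\eta<\omega_1}g^{-1}([0,\eta))$, this forces $\operatorname{cf}(\lambda)\le\omega_1$, so $\lambda=\omega_1$ and $g(\delta)\to\omega_1$.

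\textbf{The decisive step (the main obstacle).} It remains to use the group operation of $G$ together with local minimality to collapse $(a_\delta)_{\delta<\omega_1}$ to a countable subfamily still converging to $0$, contradicting the choice of $A$. Here I would pass to $N=\overline{\langle a_\delta:\delta<\omega_1\rangle}^{\,K}$: then $w(N)\le\omega_1$; $G\cap N$ is dense in $N$, locally minimal, and radial (being closed in $G$); and $N$ is non-metrizable, since otherwise the net would eventually lie in the intersection $\{0\}$ of a countable neighbourhood base at $0$ (recall $\omega_1$ has uncountable cofinality), forcing $0=a_\delta\in A$. Thus $w(N)=\omega_1$. The ``initial'' subgroups $\overline{\langle a_\delta:\delta<\delta_0\rangle}^{\,K}$ ($\delta_0<\omega_1$) are metrizable, the ``tails'' $N^+_{\delta_0}=\overline{\langle a_\delta:\delta\ge\delta_0\rangle}^{\,K}$ decrease with $\delta_0$, one has $N=\overline{\langle a_\delta:\delta<\delta_0\rangle}^{\,K}\cdot N^+_{\delta_0}$, and --- since $\chi(a_\delta)\to 1$ for every continuous character $\chi$ of $N$ and $\omega_1$ has uncountable cofinality --- each such $\chi$ kills some $N^+_{\delta_0}$, whence $\bigcap_{\delta_0<\omega_1}N^+_{\delta_0}=\{0\}$. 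Now local minimality of $G\cap N$ in $N$ (by Theorem~\ref{crit} it meets every non-trivial closed subgroup of $N$ lying in a fixed neighbourhood of $0$, hence meets the tails $N^+_{\delta_0}$ richly), combined with radiality, should produce the required countable subfamily of $\{a_\delta\}$ converging to $0$. The combinatorial core of this last step is the impossibility of a strictly increasing $\omega_1$-chain of ``finitely supported'' approximations of $0$ --- in essence, that a non-decreasing map $\omega_1\to\omega$ is eventually constant --- and it is in making this precise inside $G$ that the real work of the proof lies.
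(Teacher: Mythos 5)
Your reductions are sound as far as they go: passing to $\overline G$ and then to a closed subgroup of weight $\omega_1$ (which exists by Theorem \ref{structure:theorem}), with Lemma \ref{claim1} preserving local essentiality and radiality being hereditary, broadly parallels the paper, which reduces to a closed subgroup $H\cong\Z(p)^{\omega_1}$ or $H\cong\Z_p^{\omega_1}$. The analysis showing that a radial net witnessing $0\in\overline A$ must have length exactly $\omega_1$ with index map tending to $\omega_1$ is also correct. But the proof stops exactly where the theorem lives. Everything before your ``decisive step'' is compatible with $G$ actually being radial: the compact space $\omega_1+1$ is radial and has uncountable tightness, so the mere existence of an $\omega_1$-indexed net from $A$ converging to $0$ is not in itself contradictory. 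To conclude, one must exhibit a \emph{specific} set $A$ and a point $y\in cl_G(A)$ such that \emph{no} well-ordered net from $A$ converges to $y$; a generic witness of uncountable tightness cannot be expected to have this property, and nothing in your sketch (the tails $N^+_{\delta_0}$, characters vanishing on tails, ``local minimality meets the tails richly'') produces such a set or rules such a net out. You explicitly defer ``the real work'' to an unproved combinatorial principle about ``finitely supported approximations'', but in a general compact abelian $N=\overline{\langle a_\delta\rangle}$ there is no product decomposition and hence no notion of support to which that principle could even be applied.

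The paper supplies precisely this missing content. After reducing to $H\cong Z^{\omega_1}$ with $Z\in\{\Z(p),\Z_p\}$, Lemma \ref{uniform:proof} produces coordinate elements $x_i\in G\cap Z_i$, the subgroup $M=\bigoplus_{i<\omega_1}\langle x_i\rangle\subseteq G$, and a limit $y\in cl_G(M)$ all of whose coordinates are non-zero, with $y\notin cl_G(A)$ for every $A\subseteq M$ of size less than $\omega_1$. A putative radial net $(z_\lambda)_{\lambda<\omega_1}$ in $M$ converging to $y$ then consists of finitely supported elements, and the $\Delta$-system lemma applied to the supports yields an uncountable cofinal set $C$ and a coordinate $i$ outside the root at which $y_i\ne 0$ while all but at most one $z_\alpha$ with $\alpha\in C$ vanish there --- contradicting convergence. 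It is this support/$\Delta$-system argument, which needs both the product structure of $H$ and the special choice of $M$ and $y$, that your proposal is missing; without it the statement is not proved.
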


The proof of this theorem will be given in \S \ref{Proofs}. 

Theorem \ref{theorem:B*} is a generalization of result of Arhangel'skii stating that compact radial groups are metrizable. (This result follows from the dyadicity of all compact groups and \cite[Corollary 2]{Arh-rad}.)

Our next corollary generalizes the Fr\' echet-Urysohn version of Corollary \ref{corollary:ct> metrizable}(iv).

\begin{corollary}\label{radial}
Every radial locally \mi {}, locally precompact abelian group 
is metrizable. In particular, every radial   \mi {} abelian group is metrizable.
 \end{corollary}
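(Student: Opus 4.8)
The plan is to reduce to Theorem~\ref{theorem:B*} by passing to the completion and then stripping off its (metrizable, hence harmless) Euclidean part. Let $G$ be radial, locally minimal and locally precompact abelian. By Corollary~\ref{cororllary:crit}, $G$ is a locally essential subgroup of its completion $K$, and $K$ is a locally compact abelian group. By the structure theory of locally compact abelian groups, $K$ has an open subgroup topologically isomorphic to $\R^n\times C$ for some $n\in\N$ and some compact abelian group $C$. An open subgroup is closed, so local essentiality of $G$ in $K$ restricts to local essentiality of $G\cap(\R^n\times C)$ in $\R^n\times C$; moreover an open subgroup of $G$ is again radial (radiality is inherited by subspaces), locally minimal, locally precompact and abelian, and a topological group is metrizable iff one of its open subgroups is. Hence, replacing $G$ by $G\cap(\R^n\times C)$ and $K$ by $\R^n\times C$, I may assume $K=\R^n\times C$.

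The crux is that $\R^n$ contains no non-trivial bounded subgroup. I would fix a neighbourhood $U$ of $1$ in $K$ witnessing local essentiality of $G$, shrunk to the form $U=B\times W$ with $B$ a bounded ball about $0$ in $\R^n$ and $W$ a neighbourhood of $1$ in $C$ (a smaller neighbourhood still witnesses local essentiality). If $N$ is a closed subgroup of $K$ contained in $U$, its projection to $\R^n$ is a subgroup of $\R^n$ lying inside the bounded set $B$, hence trivial, so $N\subseteq\{0\}^n\times C$. Identifying $\{0\}^n\times C$ with $C$ and putting $D:=G\cap(\{0\}^n\times C)$, the local essentiality of $G$ in $K$ now says precisely that every closed subgroup $N$ of $C$ with $N\subseteq W$ and $N\cap D=\{1\}$ is trivial --- that is, $D$ is a locally essential subgroup of the compact abelian group $C$. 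Since $D$ is a subspace of $G$, it is radial. Thus $C$ contains a locally essential radial subgroup, so $C$ is metrizable by Theorem~\ref{theorem:B*} (should one prefer the subgroup dense, apply Theorem~\ref{theorem:B*} to $D\le\overline D$ and then Theorem~\ref{theorem:A*} to $\overline D\le C$, obtaining $w(C)\le t(\overline D)=\omega$). Then $K=\R^n\times C$ is metrizable, and hence so is its subgroup $G$. The ``in particular'' clause is then immediate, since a minimal abelian group is precompact by Theorem~\ref{PS} --- hence locally precompact --- and is locally minimal by~\eqref{minimal:is:locally:minimal}.

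I expect the reduction to a compact completion to be the only genuine obstacle. The naive move --- projecting $G$ onto the compact factor $C$ --- breaks down, because the restriction of that projection to $G$ need be neither open nor even a quotient map, so there is no reason for its image to inherit radiality from $G$. The no-bounded-subgroups property of $\R^n$ is what allows one instead to route the local essentiality of $G$ through the \emph{sub}group $D=G\cap C$ of $C$; after that, radiality of $D$ comes for free from heredity, and Theorem~\ref{theorem:B*} finishes the argument.
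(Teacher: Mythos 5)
Your proof is correct and follows essentially the route the paper intends: pass to the locally compact completion, use the structure theorem to isolate an open subgroup $\R^n\times C$ with $C$ compact, check that $G\cap C$ is a locally essential radial subgroup of $C$, and invoke Theorem~\ref{theorem:B*}. Your by-hand elimination of the $\R^n$ factor via the absence of non-trivial bounded subgroups is a correct re-derivation of what Lemma~\ref{claim1}, applied to the closed subgroup $\{0\}^n\times C$, gives directly.
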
 

Example \ref{example1} shows that (local) minimality cannot be traded for some other strong compactness properties in all results in this section.

\section{How much commutativity is necessary? The class of $\chi$-abelian groups}

The reader may wonder whether commutativity is necessary for the proof of Theorem \ref{corollary:A*} and its corollaries in Section \ref{sec:4}.
In order to answer this question, we introduce a new class of topological groups.

\begin{definition}
\label{def:chi-abelian}
Call a topological group $G$ {\em {character-abelian}} (or, briefly, {\em $\chi$-abelian}), if $\chi(Z(G)) = \chi(G)$, where $Z(G)$ denotes the center of the group $G$. 
\end{definition}

Clearly, abelian groups are $\chi$-abelian. Since  all our cardinal functions are assumed to take infinite values, it follows from
Definition \ref{def:chi-abelian} that all metrizable groups are  \aa. 

Our next theorem extends Corollary \ref{corollary:t=chi}(i) from abelian groups to $\chi$-abelian groups.

\begin{theorem}
\label{theorem:LAST2}
 If $G$ is a locally minimal, locally precompact group, then $t(Z(G)) = \chi(Z(G))$. Moreover, if $G$ is also \aa, then$t(G)=\chi(G)$.
\end{theorem}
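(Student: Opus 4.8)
The plan is to deduce Theorem~\ref{theorem:LAST2} from the already-established Theorem~\ref{corollary:A*} by passing to the center of $G$ and checking that the center inherits enough structure.

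First I would observe that since $G$ is locally precompact, its completion $K$ is a locally compact group. The center $Z(G)$ is a closed abelian subgroup of $G$, and I would pass to its closure $\overline{Z(G)}$ in $K$; this is a closed abelian subgroup of $K$, hence a locally compact abelian group, and it is the completion of $Z(G)$ (a closed subgroup of a complete group is complete, and $Z(G)$ is dense in $\overline{Z(G)}$). The key algebraic point is that $Z(G)$ is contained in $Z(K)$: indeed, every element of $Z(G)$ commutes with the dense subgroup $G$ of $K$, hence by continuity of the commutator map it commutes with all of $K$. Thus $\overline{Z(G)}\subseteq Z(K)$, so $\overline{Z(G)}$ is central in $K$.

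Next I would verify that $Z(G)$ is locally essential in its completion $\overline{Z(G)}$. Here is where I would use that $G$ is locally minimal together with Theorem~\ref{crit}: $G$ is locally essential in $K$, so there is a neighborhood $V$ of $1$ in $G$ such that every closed normal subgroup $N$ of $K$ with $N\subseteq V$ is trivial. Since subgroups of $\overline{Z(G)}$ that are closed in $\overline{Z(G)}$ are closed in $K$, and subgroups of a central subgroup are automatically normal in $K$, the same $V\cap \overline{Z(G)}$ witnesses that $Z(G)$ is locally essential in $\overline{Z(G)}$. (One must be slightly careful: one needs $V\cap Z(G)$ to trace out the right neighborhood in $Z(G)$, and one needs closed subgroups of $\overline{Z(G)}$ contained in this neighborhood to be closed normal subgroups of $K$ contained in $V$ — both hold since $\overline{Z(G)}$ is closed in $K$.) Now Theorem~\ref{corollary:A*} applies to the locally essential subgroup $Z(G)$ of the locally compact abelian group $\overline{Z(G)}$, giving $\chi(\overline{Z(G)})\le t(Z(G))$, and since $Z(G)$ is dense in $\overline{Z(G)}$ we get $\chi(Z(G))\le \chi(\overline{Z(G)})\le t(Z(G))$; the reverse inequality $t\le\chi$ is trivial, so $t(Z(G))=\chi(Z(G))$.

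For the "moreover" statement, suppose $G$ is also $\chi$-abelian, i.e. $\chi(Z(G))=\chi(G)$. Then $t(G)\le\chi(G)=\chi(Z(G))=t(Z(G))\le t(G)$, where the last inequality uses $t(Z(G))\le t(G)$ for the subspace $Z(G)\subseteq G$; hence $t(G)=\chi(G)$ as desired. The main obstacle I anticipate is the bookkeeping around local essentiality: one must make sure the neighborhood witnessing local essentiality of $G$ in $K$ restricts correctly to witness local essentiality of $Z(G)$ in $\overline{Z(G)}$, and that "closed normal in $K$" for subgroups of $\overline{Z(G)}$ reduces to "closed in $\overline{Z(G)}$" (normality being free by centrality). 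Everything else is a routine application of completions, density, and the already-proved Theorem~\ref{corollary:A*}.
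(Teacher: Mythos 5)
Your proposal is correct and follows essentially the same route as the paper: both arguments reduce the statement to Theorem~\ref{corollary:A*} applied to $Z(G)$ sitting as a locally essential subgroup of the locally compact abelian group $\overline{Z(G)}$, and the ``moreover'' part is handled identically via $t(G)\le\chi(G)=\chi(Z(G))=t(Z(G))\le t(G)$. The only difference is in how the hypotheses of Theorem~\ref{corollary:A*} are secured: the paper cites the fact that local minimality passes to closed central subgroups \cite{LocMin} and then invokes Corollary~\ref{corollary:t=chi}(i), whereas you verify local essentiality of $Z(G)$ in $\overline{Z(G)}$ by hand, which is exactly in the spirit of the paper's Lemma~\ref{claim1}. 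One point to tighten in your bookkeeping: as you state local essentiality you drop the clause $N\cap G=\{1\}$, so when you restrict the witnessing neighbourhood the step actually needed is that a closed subgroup $N\subseteq\overline{Z(G)}$ with $N\cap Z(G)=\{1\}$ also satisfies $N\cap G=\{1\}$; this holds because $N\subseteq\overline{Z(G)}\subseteq Z(K)$ and $G\cap Z(K)=Z(G)$ (the nontrivial inclusion being the density/continuity argument you already give), but it should be said explicitly.
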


 \begin{proof} Since 
 $G$ is a locally minimal group, its subgroup $Z(G)$ is locally minimal, as local minimality is preserved by taking closed central subgroups \cite{LocMin}.  Since $G$ is locally precompact, so is its subgroup $Z(G)$.   Since $Z(G)$ is abelian,  it satisfies the equality $t(Z(G)) = \chi(Z(G))$ by Corollary \ref{corollary:t=chi}(i). If $G$ is also $\chi$-abelian, then $\chi(Z(G))=\chi(G)$ by Definition \ref{def:chi-abelian}.
Therefore, $\chi(G)=t(Z(G))\le t(G)$. Since the inverse inequality $t(G)\le \chi(G)$ always holds, this proves the equality $t(G)=\chi(G)$. 
 \end{proof}
  
Our next corollary shows that the class of $\chi$-abelian groups is precisely the (largest) class of topological groups where all minimal groups
 of countable tightness are metrizable.
 
\begin{corollary}
\label{corollary:LAST} A locally minimal, locally precompact group $G$ of countable tightness is metrizable if and only if $G$ is \aa.
\end{corollary}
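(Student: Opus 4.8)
The plan is to derive Corollary~\ref{corollary:LAST} directly from Theorem~\ref{theorem:LAST2} together with the fact (recorded right after Definition~\ref{def:chi-abelian}) that every metrizable group is $\chi$-abelian. Recall that countable tightness means $t(G)=\omega$ and metrizability of a topological group is equivalent to $\chi(G)=\omega$. So the statement to be proved splits into the two implications of the ``if and only if''.

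For the nontrivial direction, assume $G$ is locally minimal, locally precompact, $\chi$-abelian, and has countable tightness. Then Theorem~\ref{theorem:LAST2} applies: since $G$ is $\chi$-abelian, its ``Moreover'' clause gives $t(G)=\chi(G)$. Combining this with $t(G)=\omega$ yields $\chi(G)=\omega$, i.e., $G$ is metrizable. This is essentially a one-line argument once Theorem~\ref{theorem:LAST2} is in hand.

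For the converse, suppose $G$ is a locally minimal, locally precompact group of countable tightness that is metrizable. Then $\chi(G)=\omega$, and since $\chi(Z(G))\le\chi(G)$ always holds while our cardinal invariants are assumed infinite, we get $\chi(Z(G))=\omega=\chi(G)$; hence $G$ is $\chi$-abelian. (Equivalently, just cite the remark that all metrizable groups are $\chi$-abelian.) This completes the equivalence.

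There is no real obstacle here: the corollary is a packaging of Theorem~\ref{theorem:LAST2}, and the only point requiring the slightest care is the bookkeeping between the two equivalent formulations ``countable tightness'' $=t(G)=\omega$ and ``metrizable'' $=\chi(G)=\omega$, together with the convention that cardinal invariants take infinite values (which is what makes the converse direction hold, since a metrizable group is automatically $\chi$-abelian). If one wanted to be slightly more self-contained, one could also note that local minimality passes to the closed central subgroup $Z(G)$ and local precompactness is inherited by subgroups, so that the hypotheses of Theorem~\ref{theorem:LAST2} are genuinely available; but this is already subsumed in the proof of that theorem.
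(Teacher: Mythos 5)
Your proof is correct and follows essentially the same route as the paper: the forward direction is the ``Moreover'' clause of Theorem~\ref{theorem:LAST2} combined with $t(G)=\omega$, and the converse uses the convention that cardinal invariants take infinite values to get $\chi(Z(G))=\omega=\chi(G)$. No gaps.
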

 
 \begin{proof} 
Let $G$ be a locally minimal, locally precompact group satisfying $t(G)=\omega$. If $G$ is \aa,  
$\chi(G)=t(G)=\omega$ by Theorem \ref{theorem:LAST2} and our assumption, so $G$ is metrizable. 
Now assume that $G$ is metrizable. Then $\chi(G) =\omega \leq \chi(Z(G))$, as all cardinal functions are assumed to take infinite values.
The reverse inequality $\chi(Z(G))\le \chi(G)$ is trivial.
We have proved that $\chi(Z(G)) = \chi(G)$, so $G$ is \aa\ by Definition \ref{def:chi-abelian}. 
\end{proof}

Another result for $\chi$-minimal groups will be obtained in Theorem \ref{weight:Stoyanov}.

\section{How much commutativity is necessary? Going beyond the class of $\chi$-abelian groups}

In this section we discuss additional conditions on a \mi \ group $G$ that allow us to establish the validity or the failure of 
the equality $t(G)=\chi(G)$ when we do not know whether $G$ is \aa. 

\begin{definition}
In order to measure commutativity of a group $G$, for every positive integer $n$, one defines the \emph{$n$-th center\/} $Z_n(G)$ of $G$ as follows. 
Let $Z_1(G) = Z(G)$. Assuming that $n > 1$ and $Z_{n-1}(G)$ is already defined, consider the canonical projection $\pi \colon G \to G/Z_{n-1}(G)$ and let $Z_n(G) = \pi^{-1} (Z (G/Z_{n-1}(G)))$. This produces an ascending chain of characteristic subgroups $Z_n(G)$ of $G$. 

A group $G$ is {\em nilpotent\/} if $Z_n(G) = G$ for some $n$. In this case, its {\em nilpotency class} $s(G)$ of $G$ is the minimum such $n$. 
\end{definition}

Obviously, the groups of nilpotency class 1 are precisely the abelian groups. 

In order to obtain some analogues of results from Section \ref{sec:4} in the non-commutative case, one of the options is to impose a  ``global'' commutativity condition such as nilpotency or resolvability. The other option is to strengthen  \mi ity.  

 \begin{definition} (\cite{DP})
 A Hausdorff group $G$ is called  {\em totally minimal\/} if all Hausdorff quotients of $G$ are minimal. 
\end{definition}
 
The chain \eqref{minimal:is:locally:minimal} trivially extends, as obviously 
\begin{equation}
\label{minimal:totally:minimal}
\mbox{compact}\to\mbox{totally minimal}\to\mbox{minimal}\to\mbox{locally minimal}.
\end{equation} 
 
The next theorem shows that if one replaces ``minimal'' by ``totally minimal'', then one can extend Corollary \ref{corollary:ct> metrizable}(ii) to the nilpotent case: 

\begin{theorem}
\label{theorem:totally_minimal} Every totally minimal nilpotent group of countable tightness is metrizable. 
\end{theorem}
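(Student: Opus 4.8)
The plan is to induct on the nilpotency class $s(G)$ of the totally minimal group $G$, with the abelian case $s(G)=1$ being Corollary \ref{corollary:ct> metrizable}(ii). So assume $s(G)=n>1$ and that the result holds for all totally minimal nilpotent groups of class $<n$. The key structural feature I want to exploit is that total minimality, unlike minimality, is inherited by all Hausdorff quotients; in particular $G/Z(G)$ is totally minimal. Moreover $G/Z(G)$ is nilpotent of class $n-1$, and since $t$ does not increase under taking continuous open surjections (quotients), $t(G/Z(G))\le t(G)=\omega$. Therefore the inductive hypothesis applies and $G/Z(G)$ is metrizable. Separately, the center $Z(G)$ is a closed central — hence in particular closed normal — subgroup of $G$; total minimality passes to closed central subgroups (local minimality does, by \cite{LocMin}, and the argument for minimality/total minimality of closed central subgroups is classical, cf.\ \cite{DPS,DP}), so $Z(G)$ is a totally minimal abelian group, and $t(Z(G))\le t(G)=\omega$ since $Z(G)$ is a subspace of $G$ (tightness is monotone on subspaces). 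Again by Corollary \ref{corollary:ct> metrizable}(ii), $Z(G)$ is metrizable.

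At this point I have a short exact sequence $1\to Z(G)\to G\to G/Z(G)\to 1$ of topological groups in which both the kernel and the quotient are metrizable, and I want to conclude that $G$ itself is metrizable. In general an extension of a metrizable group by a metrizable group need \emph{not} be metrizable, so this is exactly where the main work lies and where I expect the principal obstacle to be. The way around it is to pass to completions and use precompactness: $G$ is minimal nilpotent, hence precompact (for nilpotent groups, minimality still forces precompactness — this is the nilpotent extension of the Prodanov–Stoyanov theorem, see \cite{DPS}; alternatively one notes $Z(G)$ is precompact by Theorem \ref{PS} and $G/Z(G)$ is precompact by induction, and a central extension of a precompact group by a precompact group is precompact), so the completion $K=\widetilde G$ is a compact nilpotent group containing $G$ as a dense totally essential subgroup. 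Now $\chi(K)=\chi(Z(K))$ is what I must bound: $Z(K)$ is a closed subgroup of the compact group $K$, and one checks $\overline{Z(G)}=Z(K)\cap\overline{G}=Z(K)$ (density of $G$ together with the fact that $Z(G)=G\cap Z(K)$ for a dense subgroup of a nilpotent — this uses that in a topological group the relation ``commutes with everything'' is closed), so $\chi(Z(K))=\chi(\overline{Z(G)})$; since $Z(G)$ is metrizable and dense in the compact group $\overline{Z(G)}$, the latter is a compact metrizable group, whence $\chi(Z(K))=\omega$. Likewise $K/Z(K)$ is the completion of the metrizable precompact group $G/Z(G)$, so it is compact metrizable and $\chi(K/Z(K))=\omega$.

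It remains to deduce $\chi(K)=\omega$ from $\chi(Z(K))=\omega$ and $\chi(K/Z(K))=\omega$ for the \emph{compact} group $K$. For compact groups this implication does hold: weight is additive in extensions of compact groups, i.e.\ $w(K)\le w(Z(K))\cdot w(K/Z(K))$ (a compact group is a subgroup of the product of $Z(K)$ with $K/Z(K)$ up to a map with metrizable fibers, or more simply: a compact group $K$ with a closed normal subgroup $N$ satisfies $w(K)=w(N)\cdot w(K/N)$, a standard fact about compact groups via Peter–Weyl / the structure of the algebra of representative functions), so $w(K)=\omega$, hence $K$ is compact metrizable, hence $G$, being a subspace of $K$, is metrizable. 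I would organize the write-up as: (1) reduce to the inductive step and record that $Z(G)$ and $G/Z(G)$ are totally minimal; (2) invoke Corollary \ref{corollary:ct> metrizable}(ii) and the induction to get metrizability of kernel and quotient; (3) pass to the compact completion $K$, identify $Z(K)=\overline{Z(G)}$ and $K/Z(K)=\widetilde{G/Z(G)}$, and conclude $w(Z(K))=w(K/Z(K))=\omega$; (4) apply additivity of weight for compact group extensions to get $w(K)=\omega$ and finish. The delicate points to check carefully are the identification $Z(K)=\overline{Z(G)}$ (needs the topological closedness of the centralizer relation plus density) and the citation for the nilpotent Prodanov–Stoyanov theorem; everything else is routine.
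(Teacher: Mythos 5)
Your induction, the treatment of $G/Z(G)$ (tightness does not increase under quotients, total minimality passes to quotients, nilpotency class drops), and the appeal to Corollary \ref{corollary:ct> metrizable}(ii) for the minimal abelian group $Z(G)$ all coincide with the paper's proof. Where you part company is at the extension step, and there your starting premise is wrong: for topological groups, metrizability \emph{is} a three-space property. If $N$ is a closed normal subgroup of a topological group $G$ and both $N$ and $G/N$ are metrizable (equivalently, first countable, by Birkhoff--Kakutani), then $G$ is metrizable; this is exactly \cite[Corollary 1.5.21]{AT}, which is what the paper cites to finish in one line. The failure you have in mind occurs for general spaces and fibrations, not for group extensions with closed normal kernel. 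So the entire second half of your argument --- passing to the completion, identifying $Z(K)$ and $K/Z(K)$, and invoking additivity of weight for compact groups --- is a detour around an obstacle that is not there.

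The detour itself, taken on its own terms, has two genuine soft spots. First, the assertion that ``for nilpotent groups, minimality still forces precompactness'' is false: Theorem \ref{theorem:LAST0} of this very paper produces minimal nilpotent groups of class $2$ (containing an open copy of $H\times\T$ of uncountable index) that are locally precompact but not precompact. What is true, and what you actually need, is that \emph{totally} minimal nilpotent groups are precompact (see the remark after Theorem \ref{Last:Theorem}, citing \cite{D:Recent:Advances}); alternatively your central-extension argument works if you strengthen the induction hypothesis to carry precompactness along. Second, your justification of $Z(K)=\overline{Z(G)}$ is insufficient: density of $G$ plus closedness of centralizers gives $Z(G)=G\cap Z(K)$ and hence only the inclusion $\overline{Z(G)}\subseteq Z(K)$; a dense subgroup can meet a closed normal subgroup in a non-dense (even trivial) subgroup. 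To get equality, and likewise to see that $G/Z(G)$ embeds densely in $K/Z(K)$, you must use that a dense totally minimal subgroup of a compact group is \emph{totally dense} (the criterion of \cite{DP1} already used in Example \ref{example2}(a)), so that $G\cap Z(K)$ is dense in $Z(K)$. With these two repairs your longer route does close, but the short route via \cite[Corollary 1.5.21]{AT} is the intended one.
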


\begin{proof} We use an inductive argument, using as parameter the nilpotency class $s$ of totally minimal nilpotent group $G$ of countable tightness, and the fact that the center of a minimal group is still a minimal group 
\cite[Chap. 2]{DPS}. 

If $s=1$, then $G$ is abelian, so Corollary \ref{corollary:ct> metrizable}(ii)  applies. 

Suppose that  $s>1$ is an integer and all totally minimal nilpotent groups of nilpotency class $<s$  of countable tightness are metrizable. Let $G$ be a totally minimal group of countable tightness of nilpotency class $s$. Then $G/Z(G)$ is a totally minimal group of countable tightness (as tightness does not increase by taking quotient groups) satisfying $s(G/Z(G)) < s$. Therefore, $G/Z(G)$ is metrizable by our inductive assumption.
Since $Z(G)$ is also metrizable by Corollary \ref{corollary:ct> metrizable}(ii), we deduce that $G$ is metrizable by \cite[Corollary 1.5.21]{AT}.
 \end{proof}

We do not know if  Theorem \ref{theorem:totally_minimal} remains true when we replace ``nilpotent"  by ``resolvable'' (see item (b) of the next example) or  when we replace ``totally \mi''  by ``\mi \ precompact'' (see Question \ref{question:TvsMetr}). 

Item (a) of  the following example, due to Comfort and Grant \cite{CG}, shows that Theorem \ref{theorem:totally_minimal} strongly fails in the non-nilpotent case. Actually, the group in this example is not even resolvable.

\begin{example}\label{example2} Let $\kappa$ be an uncountable cardinal. 

(a)   Let $F$ be a finite simple non-abelian group. Then $G=\Sigma_\kappa(F)$ is totally minimal.
Indeed, every closed normal subgroup of the completion $F^\kappa$ of $G$ has the form $N=F^A$ for some subset $A\subseteq \kappa$. Hence $G$ is totally dense in the compact group $F^\kappa$ (this means that $G\cap N $ is dense in $N$ for every closed normal subgroup of $F^\kappa$). According to a criterion from \cite{DP1}, this implies that $G$ is totally minimal. Combining this with Fact \ref{sigma-product:fact}, we conclude that 
{\em $G$ is a zero-dimensional totally minimal, $\omega$-bounded  (hence, precompact), Fr\' echet-Urysohn group such that $\chi(G)=w(G)=\kappa$.\/}
Replacing $F$ by a connected compact simple Lie group, one obtains a similar example of {\em a non-metrizable Fr\' echet-Urysohn, $\omega$-bounded, totally minimal connected group\/} \cite{DS}. In both cases, $G'=G$ holds; that is, the group $G$ is quite far from being resolvable (in particular, abelian). 

(b) If we replace the finite simple group $F$ in item (a) 
 by any center-free resolvable finite group (e.g., $F=S_3$ or $F = S_4$), then the group 
$G=\Sigma_\kappa(F)$ will also be resolvable and \mi. Hence, {\em $G$ is a zero-dimensional minimal, $\omega$-bounded  (hence, precompact), Fr\' echet-Urysohn resolvable group such that $\chi(G)=w(G)=\kappa$.\/}

Since resolvability of $G$ follows from the  resolvability of $F^\kappa$, it only remains to check that  
 $G$ is minimal. According to Theorem \ref{crit}, it suffices to check that $G$ is essential in $F^\kappa$. Pick a non-trivial (closed) normal subgroup   $N$ of $F^\kappa$. Then there exists $i < \kappa$ such that the image $N_i$ of $N$ under the canonical projection $p_i : F^\kappa \to F$ is non-trivial. Hence, there exists  $x=(x_j)_{j<\kappa}\in N$ such that $x_i= p_i(x) \ne 1$. Since $Z(F)$ is trivial, $x_i\in F$ cannot be central, so there 
 exists $y_i \in F$ such that $u_i := [x_i,y_i]\ne 1$. Pick the element $y=(y_j)_{j<\kappa}\in F^\kappa$ such that $y_j=1$ when $i\ne j\in \kappa$, while  $y_i$ coincides with the previously chosen $y_i \in F$. It is easy to see that 
$$
F_i:=\{f\in F^\kappa: f(j) = 1\mbox{ for all } j<\kappa \mbox{ with }j\ne i\}
$$
is a subgroup of $G$ and $[x,y]\in F_i $ by the choice of $y$. On the other hand, the normality of $N$ in conjunction with $x\in N$ entails  $[x,y]\in N$. Since the $i$th coordinate of $[x,y]$ is equal to $u_i\ne 1$, this proves that $N \cap G \ne \{1\}$.

In Remark \ref{not:tot:minimal:remark}(b) below, we shall show that, unlike the group from item (a), $G$ is not totally minimal. 

(c) More generally, one can take a family $\{F_i: i < \kappa\}$ of finite groups such that all but finite number of them are center-free. 
Then the same argument shows that their $\Sigma$-product $G$ is Fr\' echet-Urysohn, $\omega$-bounded, minimal and non-totally minimal. Moreover, if all $F_i$ are resolvable with a bounded degree of resolvability, then $G$ will be also resolvable. 
\end{example}

\begin{remark}
\label{not:tot:minimal:remark}
(a) The examples above show that Fr\' echet-Urysohn, $\omega$-bounded, minimal resolvable groups need not be metrizable (actually, they may have arbitrarily large character). Yet we do not know if one can trade ``$\omega$-bounded and minimal" for ``totally minimal" in this situation (see  Question \ref{question:TvsMetr}). 

(b)  The minimal resolvable group $G$ built in item (b) of Example \ref{example2} is $\omega$-bounded (hence \cc), but it is not totally \mi. 
  In fact, one has the following general fact: {\em if $G$ is a totally minimal countably compact group, then  $G\cdot \overline{K'} =K$, where $K$ is the completion of $G$}. This follows from the fact that the image $G_1$ of $G$ in the quotient group $K/\overline{K'}$ is totally minimal and \cc, so must be compact, according to \cite{DS}. This yields $G_1=K/\overline{K'}$, and consequently $G\cdot \overline{K'} =K$. 

In our example, $K=F^\kappa$ and $F' \ne F$, as $F$ is resolvable. So $ \overline{K'}=K' = (F')^\kappa$. Therefore, 
$G\cdot \overline{K'} \ne K$. 
\end{remark}

The next theorem shows that commutativity is deeply rooted in Theorem \ref{theorem:A*} (and all its corollaries)  and cannot be  replaced even by the finest possible generalization of abelian.

\begin{theorem}
\label{theorem:LAST0}
For every uncountable cardinal $\kappa$ and for every dense subgroup $H$ of $\T^\kappa$, there exists a minimal  nilpotent  group $G_H$ of nilpotency class 2 having an open subgroup of index $\kappa$ isomorphic to $H \times \T$. Consequently, 
$$
|G_H| = \cont\cdot \kappa\cdot  |H| ,\; \; \;  t(G_H) = t(H\times \T)\; \; \; \mbox{ and }\;\; \; w(G_H)=\chi(G_H)= \chi(H)=\kappa.
$$ 
\end{theorem}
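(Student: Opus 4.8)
The plan is to build $G_H$ as a central extension of $H \times \mathbb{T}$-type data using a Heisenberg-like construction. Concretely, I would start from the uncountable index set $\kappa$ and note that $\mathbb{T}^\kappa \cong \mathbb{T}^\kappa \times \mathbb{T}$; so it is enough to realize the desired open subgroup as $H \times \mathbb{T}$, where the extra $\mathbb{T}$ factor will serve as the center of the nilpotent group. The idea is to take a non-degenerate bi-additive pairing $\beta \colon \mathbb{Z}^{(\kappa)} \times \mathbb{Z}^{(\kappa)} \to \mathbb{T}$ (for instance, fix a bijection pairing coordinates and use the standard symplectic form on blocks of two coordinates), which induces a continuous pairing on a suitable dense subgroup, and then form the generalized Heisenberg group $G_H = H \times \mathbb{T} \times (\text{a transversal of size }\kappa)$ with multiplication twisted by $\beta$. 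The open subgroup $H \times \mathbb{T}$ is then the kernel of the "index" coordinate, and the center is exactly the $\mathbb{T}$ factor (this uses non-degeneracy of the pairing). Nilpotency of class exactly $2$ is immediate: $[G_H, G_H] \subseteq \mathbb{T} = Z(G_H)$ and $G_H/Z(G_H)$ is abelian but $G_H$ itself is non-abelian.

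The heart of the matter is verifying minimality of $G_H$. By Theorem \ref{crit} (or rather its generality in the non-abelian diagram discussion), I would identify the completion $K_H$ of $G_H$ — it should be the analogous Heisenberg group built over $\mathbb{T}^\kappa$ (a compact group, since $\mathbb{T}^\kappa$ is compact and the extension by $\mathbb{T}$ stays compact), and then show $G_H$ is essential in $K_H$. Essentiality reduces to the following: given a non-trivial closed normal subgroup $N$ of $K_H$, show $N \cap G_H \neq \{1\}$. If $N$ meets the central $\mathbb{T}$ non-trivially, then since $G_H$ contains that $\mathbb{T}$ we are done. Otherwise $N$ projects injectively (on a neighborhood) to $K_H/\mathbb{T}$, which is the abelian group $\mathbb{T}^\kappa$; here the commutator trick from Example \ref{example2}(b) takes over: pick $x \in N$ whose image in $\mathbb{T}^\kappa$ is non-trivial in some coordinate, and use non-degeneracy of $\beta$ to find $y \in K_H$ with $[x,y] \neq 1$; by normality $[x,y] \in N$, and $[x,y]$ lies in the central $\mathbb{T}$, which forces $N \cap \mathbb{T} \neq \{1\}$ — contradiction. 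One must arrange the pairing $\beta$ carefully so that this commutator can always be produced with $y$ chosen inside (the completion of) $G_H$, i.e. so that density of $H$ in $\mathbb{T}^\kappa$ actually supplies a usable $y$; since $H$ is dense this is where density is used.

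Once minimality is established, the cardinal-invariant equalities follow formally. The open subgroup $H \times \mathbb{T}$ has index $\kappa$, so $|G_H| = \kappa \cdot |H \times \mathbb{T}| = \kappa \cdot |H| \cdot \mathfrak{c}$. Since an open subgroup determines local behavior, $\chi(G_H) = \chi(H \times \mathbb{T}) = \chi(H)$ (the $\mathbb{T}$ factor is metrizable and absorbed), and $\chi(H) = \kappa$ because $H$ is dense in $\mathbb{T}^\kappa$ and dense subgroups of $\mathbb{T}^\kappa$ have character $\kappa$ for uncountable $\kappa$. Tightness is likewise local: $t(G_H) = t(H \times \mathbb{T})$ since $G_H$ is covered by translates of the open subgroup and tightness is determined on an open subgroup. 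Finally $w(G_H) = \chi(G_H) = \kappa$: the inequality $\chi \le w$ is trivial, and $w(G_H) \le \kappa$ follows because $G_H$ has a base of size $\kappa$ (the open subgroup $H \times \mathbb{T}$ has a base of size $\chi(H) = \kappa$ at $1$, and there are $\kappa$ cosets) — alternatively invoke Theorem \ref{weight:net-weight} together with a network-weight count.

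I expect the main obstacle to be the essentiality argument in the mixed case — ensuring that the commutator $[x,y]$ can be driven to be non-trivial while keeping $y$ in the right subgroup — which hinges entirely on choosing the defining pairing $\beta$ to be "non-degenerate enough" relative to the dense subgroup $H$; everything else is bookkeeping with open subgroups and cardinal functions.
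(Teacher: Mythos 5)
Your overall architecture matches the paper's: the group is Megrelishvili's generalized Heisenberg group built from the pairing between $\T^\kappa$ and its dual, with $G_H$ the subgroup obtained by restricting the $\T^\kappa$-coordinate to $H$, and minimality is attacked via density plus essentiality, with essentiality coming from the commutator trick. But there is one genuine gap that your proposal actively papers over. The completion of $G_H$ is \emph{not} compact: it is $\mathcal{H}(K)=\T\times\T^\kappa\times\Z^{(\kappa)}$ with the twisted multiplication, and the factor $\widehat K=\Z^{(\kappa)}$ is infinite discrete, so $\mathcal{H}(K)$ is only locally compact and $G_H$ is never precompact. Consequently Theorem \ref{crit} does not reduce the problem to essentiality alone; you also need the completion itself to be minimal, and for a non-compact locally compact group this is a substantive theorem rather than a freebie (compare Theorem \ref{Steph} in the abelian case). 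The paper imports exactly this from Megrelishvili \cite{Meg}: the full Heisenberg group $\mathcal{H}(\T^\kappa)$ is minimal. Without that input your essentiality argument proves nothing about minimality of $G_H$.

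Two smaller points. First, the pairing you propose, $\beta\colon\Z^{(\kappa)}\times\Z^{(\kappa)}\to\T$, is the wrong one: the construction needs the evaluation pairing $\widehat K\times K\to\T$, $(f,x)\mapsto f(x)$, between $\T^\kappa$ and its Pontryagin dual $\Z^{(\kappa)}$; a pairing of the discrete group with itself would leave the whole $H\times\T$ part central and kill the commutator argument. Second, your anticipated ``main obstacle'' --- producing the witness $y$ inside $G_H$ --- is a non-issue: $N$ is normal in the completion, so $[x,y]\in N$ for \emph{any} $y$ in the completion, and $[x,y]$ lies in $Z(\mathcal{H}(K))=\T\times\{0\}\times\{0\}$, which is contained in $G_H$ by construction; density of $H$ is used only to ensure that the completion of $G_H$ is the known-minimal group $\mathcal{H}(\T^\kappa)$. (Also $K_H/Z\cong\T^\kappa\times\Z^{(\kappa)}$, not $\T^\kappa$, so your case analysis omits elements whose $\T^\kappa$-image is trivial; the clean statement is simply that any non-central $u\in N$ admits some $v$ with $1\ne[u,v]\in N\cap Z$.) The cardinal bookkeeping at the end is fine.
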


The proof of this theorem will be given in \S \ref{MMM}.

In the sequel we denote by $c(G)$ the connected component of a topological group $G$.

\begin{corollary}
\label{corollary:LAST1}
For every uncountable cardinal $\kappa$, there exists a  Fr\' echet-Urysohn minimal group $G$  of character $\kappa$ such that $c(G)$ is an open normal $\omega$-bounded subgroup (so that $G$ is locally precompact).  
\end{corollary}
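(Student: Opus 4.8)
\textbf{Proof plan for Corollary \ref{corollary:LAST1}.}
The plan is to apply Theorem \ref{theorem:LAST0} to a suitably chosen dense subgroup $H$ of $\T^\kappa$. The group produced by Theorem \ref{theorem:LAST0} has an open subgroup of index $\kappa$ isomorphic to $H\times\T$, so the properties we want for $G$ (Fr\'echet-Urysohnness, character $\kappa$, and an open normal $\omega$-bounded connected subgroup) will be engineered at the level of $H$ and then transported to $G$. The natural candidate is $H=\Sigma_\kappa(\T)$, the $\Sigma$-product of $\kappa$ copies of the circle. By Fact \ref{sigma-product:fact}, $H$ is $\omega$-bounded, Fr\'echet-Urysohn, connected, with $\chi(H)=w(H)=\kappa$; moreover $H$ is dense in $\T^\kappa$, so Theorem \ref{theorem:LAST0} applies and gives a minimal nilpotent group $G=G_H$ of nilpotency class $2$ with $\chi(G)=\kappa$ and an open subgroup $O$ of index $\kappa$ with $O\cong H\times\T$.

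The next step is to check the convergence and connectedness properties of $G$. Since $O$ is open in $G$ and $O\cong H\times\T$ is a product of two Fr\'echet-Urysohn groups (using that $\T$ is metrizable, hence Fr\'echet-Urysohn, and that $H$ is Fr\'echet-Urysohn by the choice above), $O$ is Fr\'echet-Urysohn; an open subgroup that is Fr\'echet-Urysohn makes the whole group Fr\'echet-Urysohn, since Fr\'echet-Urysohnness is a local property for topological groups. Similarly, $O\cong H\times\T$ is $\omega$-bounded (a product of an $\omega$-bounded group with a compact metric group is $\omega$-bounded), and $O$ is connected (a product of connected groups). Finally, $O$ is the connected component $c(G)$: it is connected and open, hence closed, hence clopen, so it must equal $c(G)$ (every clopen subgroup contains $c(G)$, and conversely $c(G)$ contains the connected $O$). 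Being the connected component, $O=c(G)$ is automatically a (closed) normal subgroup of $G$. This gives all the required properties: $G$ is Fr\'echet-Urysohn, minimal, has character $\kappa$, and $c(G)=O$ is an open normal $\omega$-bounded subgroup; in particular $G$ is locally precompact because it has an open precompact subgroup.

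I do not expect any genuine obstacle here: the corollary is a direct pull-back of Theorem \ref{theorem:LAST0} along the well-understood example $\Sigma_\kappa(\T)$, and the only points requiring a word of justification are the two "local property" arguments (Fr\'echet-Urysohnness and $\omega$-boundedness of a group follow from those of an open subgroup) and the identification of the open connected subgroup with $c(G)$. The mildest subtlety is that Theorem \ref{theorem:LAST0} only guarantees $O\cong H\times\T$ abstractly as topological groups and an index computation; one should note that $H\times\T$ indeed inherits the Fr\'echet-Urysohn and $\omega$-bounded properties from $H$ (with $\T$ contributing metrizability and compactness respectively), which is immediate. The connectedness of $G$ itself is not claimed — only that $c(G)$ is open of index $\kappa$ — so there is nothing to reconcile there.
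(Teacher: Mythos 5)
Your overall route is exactly the paper's: take $H=\Sigma_\kappa(\T)$, feed it into Theorem \ref{theorem:LAST0}, and identify the resulting open subgroup $O\cong H\times\T$ with $c(G_H)$. The identification $O=c(G_H)$ (open connected subgroup $=$ connected component), the normality, the $\omega$-boundedness of $H\times\T$, and the transfer of the Fr\'echet--Urysohn property from an open subgroup to the whole group are all fine.

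There is, however, one step whose justification as written is wrong: you claim $H\times\T$ is Fr\'echet--Urysohn because it is ``a product of two Fr\'echet--Urysohn groups.'' That general principle is false: the product of two Fr\'echet--Urysohn topological groups need not be Fr\'echet--Urysohn (in ZFC there are even countable Fr\'echet--Urysohn groups whose product fails to be Fr\'echet--Urysohn; this pathology is precisely the subject of the $\alpha_i$-property literature the paper cites in connection with Malykhin's problem). The conclusion you need is nevertheless true here, and there are two clean ways to get it. The paper's way is a one-line observation that makes the whole issue evaporate: since $H=\Sigma_\kappa(\T)$, one has $H\times\T\cong H$ (adjoining one more circle coordinate to an element of countable support keeps the support countable), so $O\cong H$ inherits Fr\'echet--Urysohnness, $\omega$-boundedness and connectedness directly from Fact \ref{sigma-product:fact} with no product argument at all. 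Alternatively, if you insist on arguing via the product, you must use that $H$ is countably compact and Fr\'echet--Urysohn, hence strongly Fr\'echet, and that the product of a strongly Fr\'echet space with a first-countable space is Fr\'echet (Michael's theorem); mere Fr\'echet--Urysohnness of the two factors does not suffice. Please replace the offending sentence by one of these arguments.
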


\begin{proof} Let $\kappa$ be an uncountable cardinal $\kappa$.
By Fact \ref{sigma-product:fact}, $H=\Sigma_\kappa(\T)$ is a dense $\omega$-bounded, Fr\' echet-Urysohn subgroup of  $\T^\kappa$. Moreover, $H\cong H \times  \T$. Let $G_H$ be the minimal nilpotent group of nilpotency class 2 as in the conclusion of Theorem \ref{theorem:LAST0},
and $L$ be an open subgroup of $G_H$ satisfying $L\cong H\times\T\cong H$. Since $H$ is connected by Fact \ref{sigma-product:fact}(i), so is $L$. Since $L$ is open in $G_H$, we get $L = c(G_H)$.  It remians only to recall that $w(G_H)=\chi(G_H) = \kappa$. 
\end{proof}

\begin{remark} Here is another corollary that can be deduced from the general Theorem \ref{theorem:LAST0}.  Suppose that $\cont < \tau \leq \kappa$ are two cardinals and  $H$ is a dense subgroup of $\T^\kappa$ such that $ t(H) = \tau$. Then $t(H \times \T) = t(H) = \tau$ \cite{Malyhun}, the group $G_H$ given by Theorem \ref{theorem:LAST0} has $t(G_H) = \tau$ and $w(G_H) = \chi(G_H)= \kappa$, i.e.,  one can make the gap between $t(G_H)$ and $\chi(G_H)$  arbitrarily large. However,  $t(G_H)> \cont$ is a blanket condition for this corollary.
 
 Another point is this: unlike Example \ref{example2} one cannot have $G_H$ \cc, as it is never precompact. Also, due to the above mentioned restriction $t(G_H)> \cont$,  this group cannot have countable tightness. Nevertheless, choosing $H$ appropriately (e.g., to sum of all possible subproducts of $\tau$ copies of $\T$) one may still achieve to have an open subgroup of $G_H$ that is $\omega$-bounded (even $\tau$-bounded). 
\end{remark}

\begin{remark} Example \ref{example2} and Corollary \ref{corollary:LAST1} suggest that some degree of ``global'' commutativity may be  necessary for 
the validity of Theorem \ref{corollary:A*} and its corollaries (e.g., Corollary \ref{corollary:t=chi} and Corollary \ref{corollary:ct> metrizable}). 
As a commutativity of global type one can consider nilpotency (or resolvability). This kind of weak commutativity should be compared to the
property of being  \aa \ that we introduced {\em ad hoc}, imposing only the center to be big, regardless of the rest of the group that may be pretty non-commutative, as the example $G = SO_3(\R)\times H$ shows, where $H$ is a non-discrete abelian group. Theorem \ref{theorem:LAST2} shows that  for such a group $G$ the equality $t(G) = \chi(G) = w(G)$ is ensured if the group $H$ is minimal,
while $G$ is quite far from being resolvable (hence, still more from being abelian), since 
$G' = SO_3(\R) \times \{0\}$ is non-trivial and perfect (i.e., $G'' = G'$).
\end{remark}

\section{Large convergent super-sequences in locally minimal groups}

Following \cite{DS2}, call an infinite Hausdorff space $S$ a {\em convergent super-sequence} if $S$ has a single non-isolated point $x$ such that
every neighbourhood of $x$ contains all but finitely many points of $S$.
In such a case we shall say that the super-sequence $S$ {\em converges to } $x$.

Clearly, every non-trivial 
convergent sequence is a convergent super-sequence. 
Obviously, 
\begin{equation}
\label{eq:chi:w}
|S| = \chi(S)= w(S)
\ 
\mbox{ for every convergent super-sequence } S. 
\end{equation}

\begin{definition}
\label{def:s}
For a topological space $X$, we let
$$
\seq (X)= \min\{\kappa: X \mbox{ has no convergent super-sequence of size }\geq \kappa\}.
$$
\end{definition}

It follows from \eqref{eq:chi:w} that 
\begin{equation}
\seq(X) \leq \chi(X) ^+
\ 
\mbox{for every space }X.
\end{equation}
 
For a topological group $G$, the cardinal invariant $\seq(G)$ should be compared with the following one introduced in \cite{DS2}: 
$$
seq (G)= \min\{\kappa: G \mbox{ has a convergent super-sequence $S$ of size } \kappa,  \mbox{ such that } \overline{\langle S \rangle}=G \},
$$
where $\langle S \rangle$ denotes the smallest subgroup of $G$ containing $S$ and the bar denotes the closure in $G$.
Obviously, $seq(G) < \seq(G)$, provided that $seq(G)$ is defined. As shown in \cite{DS2}, $seq(G)^+ <\seq(G)$ may occur (e.g., with $G=\T^\cont$). 

As we saw in Corollary \ref{corollary:ct> metrizable}, a non-metrizable locally minimal locally precompact  abelian group is also non-sequential. Nevertheless, we show that such a group $G$ contains non-trivial converging sequences, actually converging super-sequences of the maximum possible size $\chi(G)$.

\begin{theorem}
\label{super-sequence:in:essential:subgroups}
Every non-discrete, locally essential subgroup $G$ of a locally compact abelian group $K$ contains a super-sequence $S$ converging to $0$ such that $|S|=\chi(K)$. In particular, $\seq(G)= \chi(G) ^+$.
\end{theorem}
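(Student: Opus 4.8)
The plan is to reduce the problem to that of an \emph{essential} subgroup of a \emph{compact} abelian group, and then to manufacture the super-sequence out of a large family of ``tiny'' closed subgroups read off from the structure of the Pontryagin dual.

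\medskip
\noindent\textbf{Reductions.}
If $\chi(K)=\omega$ then $K$, being a first-countable locally compact group, is metrizable; hence $G$ is metrizable and non-discrete, so it contains a non-trivial convergent sequence, which is a super-sequence of size $\omega=\chi(K)$. So assume $\chi(K)=\kappa>\omega$. Write $K=\R^n\times K'$ with $K'$ possessing a compact open subgroup $C'$; then $w(C')=\chi(C')=\chi(K')=\chi(K)=\kappa$. Since $C'$ is closed in $K$, the subgroup $G\cap C'$ is locally essential in $C'$ (a closed subgroup of $C'$ contained in the trace on $C'$ of a neighbourhood witnessing local essentiality of $G$ in $K$ is a closed subgroup of $K$ contained in that neighbourhood); and because $C'$ has weight $\kappa>\omega$ it is not a Lie group, so it has arbitrarily small non-trivial closed subgroups, and a short argument using local essentiality shows that $G\cap C'$ cannot be discrete. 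Thus we may assume $K$ is compact of weight $\kappa>\omega$. Now let $V$ be a neighbourhood of $0$ witnessing local essentiality of $G$ in $K$. As $K$ is a pro-Lie group, there is a closed subgroup $N\subseteq V$ with $K/N$ a Lie group; then $N\neq\{0\}$ (else $K$ is Lie, hence metrizable) and $w(N)=\kappa$, since $\widehat N$ is $\widehat K$ modulo a finitely generated subgroup and $|\widehat K|=\kappa>\omega$. Every closed subgroup of $N$ is contained in $V$, so $G\cap N$ is \emph{essential} in $N$, and $G\cap N$ is non-discrete by the same argument as before. Renaming, it suffices to prove: if $H$ is an essential, non-discrete subgroup of a \cag\ $M$ with $w(M)=\kappa>\omega$, then $H$ contains a super-sequence of size $\kappa$ converging to $0$.

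\medskip
\noindent\textbf{Key claim.}
$M$ admits closed subgroups $D_\alpha$ $(\alpha<\kappa)$ with $D_\alpha\neq\{0\}$, with $D_\alpha\cap D_\beta=\{0\}$ for $\alpha\neq\beta$, and such that every neighbourhood of $0$ in $M$ contains all but finitely many of the $D_\alpha$. By Pontryagin duality this reduces to a statement about the discrete abelian group $A=\widehat M$, of cardinality $|A|=w(M)=\kappa$: namely, that $A$ admits a surjective homomorphism $q\colon A\to\bigoplus_{\alpha<\kappa}C_\alpha$ onto a direct sum of $\kappa$-many non-trivial countable abelian groups. Indeed, letting $\pi_\alpha\colon\bigoplus_\beta C_\beta\to C_\alpha$ be the projection, set $J_\alpha=\ker(\pi_\alpha\circ q)$ and $D_\alpha=J_\alpha^{\perp}$; then $\widehat{D_\alpha}\cong A/J_\alpha\cong C_\alpha\neq\{0\}$; for $\alpha\neq\beta$ one checks $J_\alpha+J_\beta=A$, whence $D_\alpha\cap D_\beta=\{0\}$; and if $S\subseteq A$ is finite then, for every $\alpha$ outside the finite set $\bigcup_{a\in S}\{\gamma:q(a)_\gamma\neq 0\}$, one has $S\subseteq J_\alpha$, so $D_\alpha$ lies in the basic neighbourhood of $0$ determined by $S$. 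The existence of such a quotient of $A$ is established by passing to the divisible hull $\overline A=\Q^{(r_0)}\oplus\bigoplus_p\Z(p^\infty)^{(r_p)}$ — so that $\kappa=\max\{r_0,\sup_p r_p\}$ because $\kappa>\omega$ — and a case analysis: if the maximal divisible subgroup of $A$ has cardinality $\kappa$ it is a direct summand of $A$ and is already a direct sum of $\kappa$ countable groups; otherwise $A$ is, up to a summand, reduced, and the required quotient comes either from a large elementary quotient $A/pA\cong\Z(p)^{(\kappa)}$, or — analysing the primary components $A_p$ through their basic subgroups — from a quotient of $A$ whose maximal divisible subgroup is $\Z(p^\infty)^{(\kappa)}$ (again a direct summand). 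When $\kappa$ is singular (necessarily of cofinality $\omega$) one assembles such quotients along a cofinal $\omega$-sequence of primes.

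\medskip
\noindent\textbf{Conclusion.}
Granting the claim, since $H$ is essential in $M$ we may choose $s_\alpha\in(H\cap D_\alpha)\setminus\{0\}$ for each $\alpha<\kappa$. The $s_\alpha$ are non-zero and pairwise distinct (the $D_\alpha$ meet pairwise in $\{0\}$), and $s_\alpha\to 0$, since any neighbourhood of $0$ contains all but finitely many $D_\alpha$, hence all but finitely many $s_\alpha$. Therefore $S=\{s_\alpha:\alpha<\kappa\}\cup\{0\}$ is a super-sequence of size $\kappa$ converging to $0$ contained in $H\subseteq G$: the point $0$ is its unique non-isolated point because each $s_\alpha\neq 0$ has a neighbourhood omitting $0$, which therefore meets $S$ in a finite set. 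This proves the first assertion with $|S|=\chi(K)$. For the second, $\seq(G)\le\chi(G)^+$ holds for every space, while Theorem~\ref{theorem:A*} gives $\chi(K)\le t(G)\le\chi(G)\le\chi(K)$, so $\chi(G)=\chi(K)$; as $G$ then contains a super-sequence of size $\chi(G)$, we get $\seq(G)=\chi(G)^+$.

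\medskip
\noindent\textbf{Where the difficulty lies.}
The real content is the algebraic input of the key claim — that an abelian group of uncountable cardinality $\kappa$ surjects onto a direct sum of $\kappa$ non-trivial countable groups — together with its dual reading as ``$\kappa$ tiny closed subgroups converging to $0$''; the singular-cardinal case of this input requires the most care. Given this, the topological part is routine. A minor but genuine point in the reductions is that one must verify that non-discreteness survives the passages $G\rightsquigarrow G\cap C'\rightsquigarrow G\cap N$ — and it is exactly there that local essentiality (rather than mere density) is used.
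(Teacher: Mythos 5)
Your topological skeleton is sound and is essentially the one the paper uses: reduce to a compact group of uncountable weight, produce $\chi(K)$-many non-trivial closed subgroups that pairwise meet trivially and ``converge to $0$'', and then use (local) essentiality to pick a non-zero point of $G$ in each of them; this last step is exactly the paper's Lemma~\ref{lemma:0.1}, and your reductions via $K\cong\R^n\times K'$ and the restriction of local essentiality to closed subgroups match Lemma~\ref{claim1}. The Pontryagin-duality translation of your ``key claim'' into a statement about quotients of the discrete group $A=\widehat{M}$, and the derivation of the super-sequence and of $\seq(G)=\chi(G)^+$ from it, are all correct.

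The gap is in your proof of the key claim itself. In the paper this structural input is precisely Theorem~\ref{structure:theorem} (a closed subgroup $H\cong\prod_p\Z(p)^{\sigma_p}\times\prod_p\Z_p^{\kappa_p}$ with $w(H)=w(K)$), whose proof is deferred to the forthcoming paper \cite{DS1}; it is the real content of the argument, not a routine fact. Your sketch of the dual statement does not establish it. The dichotomy ``either some $A/pA\cong\Z(p)^{(\kappa)}$ is large, or the torsion primary components $A_p$, via their basic subgroups, yield a quotient with divisible part $\Z(p^\infty)^{(\kappa)}$'' misses, for instance, reduced torsion-free groups with all $p$-ranks small: already $A=\Z_p$ viewed as a discrete group (so $\kappa=\cont$) has $A/qA$ finite for every prime $q$ and $A_q=0$ for every $q$, yet the required quotient does exist (e.g.\ $\Z_p/\Z\cong\Q^{(\cont)}\oplus\bigoplus_{q\ne p}\Z(q^\infty)$) and arises from the large torsion-free rank --- a mechanism absent from your case analysis. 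In addition, a quotient of the torsion subgroup $t(A)$ does not automatically extend to a quotient of $A$, and the singular-$\kappa$ assembly is only asserted. If you instead quote Theorem~\ref{structure:theorem}, your subgroups $D_\alpha$ can simply be taken to be the factors of its product subgroup $H$ and your argument closes; as a self-contained proof, however, the algebraic core is missing.
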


The proof of this theorem is given in \S \ref{Sec:10}.

Discrete groups were ruled out in the above theorem as the equality in the conclusion of the theorem fails for (subgroups of) discrete groups. 
Indeed, $\seq(G) = \omega$ for a discrete group $G$, as $G$ contains no super-sequences converging to $0$. On the other hand, 
$\chi(G)=\omega$ for such a group $G$, as all cardinal functions are assumed to take infinite values.  Therefore, $\seq(G)= \chi(G)=\omega$
 for any discrete group $G$. 

\begin{corollary}
\label{super-sequence:in:minimal:groups}
Every non-discrete, locally minimal, locally precompact abelian group $G$ contains a super-sequence $S$ converging to $0$ such that $|S|=\chi(G)$. In particular, $\seq(G)= \chi(G) ^+$.
\end{corollary}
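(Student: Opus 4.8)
The plan is to deduce this corollary from Theorem \ref{super-sequence:in:essential:subgroups} by realizing $G$ inside a suitable locally compact group, namely its completion. Let $G$ be a non-discrete, locally minimal, locally precompact abelian group. Since $G$ is locally precompact, its completion $K$ (with respect to the left uniformity, which coincides with the two-sided one because $G$ is abelian) is a locally compact group by the characterization of local precompactness recalled in Section 1; moreover $K$ is abelian, being the closure of the abelian dense subgroup $G$. As $G$ is dense in $K$ and locally minimal, Corollary \ref{cororllary:crit} guarantees that $G$ is a locally essential subgroup of $K$.

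Next I would simply invoke Theorem \ref{super-sequence:in:essential:subgroups} for the pair $G \le K$: since $G$ is non-discrete, it contains a super-sequence $S$ converging to $0$ with $|S| = \chi(K)$. To conclude the first assertion it remains to identify $\chi(K)$ with $\chi(G)$, which is the standard fact that a topological group and its completion have the same character: on the one hand $\chi(G) \le \chi(K)$ because $G$ carries the subspace topology, and on the other hand, if $\mathcal{N}$ is a neighbourhood base at $0$ in $G$, then $\{\overline{V}^{\,K} : V \in \mathcal{N}\}$ is a neighbourhood base at $0$ in $K$, so $\chi(K) \le \chi(G)$. Hence $|S| = \chi(K) = \chi(G)$.

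For the ``in particular'' clause, recall the inequality $\seq(X) \le \chi(X)^+$ valid for every space $X$, noted right after Definition \ref{def:s}; applied to $X = G$ it gives $\seq(G) \le \chi(G)^+$. Conversely, the super-sequence $S$ produced above witnesses that $G$ has a convergent super-sequence of size $\chi(G)$, whence $\seq(G) > \chi(G)$, that is, $\seq(G) \ge \chi(G)^+$. Combining the two inequalities yields $\seq(G) = \chi(G)^+$. Since every step is a direct appeal to an already-established result, I do not expect any genuine obstacle; the only point requiring (routine) care is the equality $\chi(K) = \chi(G)$, and even that could be bypassed by observing that Theorem \ref{super-sequence:in:essential:subgroups} already records the formula $\seq(G) = \chi(G)^+$ in its own statement.
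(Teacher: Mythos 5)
Your proof is correct and follows essentially the same route as the paper: pass to the locally compact completion $K$, use the minimality criterion to get local essentiality of $G$ in $K$, invoke Theorem \ref{super-sequence:in:essential:subgroups}, and transfer via $\chi(K)=\chi(G)$. You merely spell out the details (abelianness of $K$, the character equality, and the derivation of $\seq(G)=\chi(G)^{+}$) that the paper leaves implicit.
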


\begin{proof}
Since $G$ is locally precompact, the completion $K$ of $G$ is locally compact.
Since $G$ is locally minimal, it is locally essential in $K$ by Theorem \ref{crit}. Now the conclusion of our corollary follows
from Theorem \ref{super-sequence:in:essential:subgroups} and the equality 
$\chi(G)=\chi(K)$.
\end{proof}

It should be noted that even if the super-sequence $S$ in Theorem \ref{super-sequence:in:essential:subgroups} and Corollary \ref{super-sequence:in:minimal:groups} is as big as possible, it need not topologically generate $G$; that is, $\overline{\langle S \rangle}=G$ may fail. Indeed, there exists an $\omega$-bounded (and totally disconnected) minimal group $G$  such that $\overline{\langle S \rangle}=G$ fails for every convergent super-sequence $S$ in $G$, see \cite[Example 4.2.3]{DTT}. 

\begin{corollary}
\label{cor:super-seq:in:minimal}
Every infinite minimal abelian group $G$ satisfies  $\seq(G) = \chi(G) ^+ =w(G) ^+$.
\end{corollary}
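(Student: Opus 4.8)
The plan is to deduce Corollary~\ref{cor:super-seq:in:minimal} by combining Corollary~\ref{super-sequence:in:minimal:groups}, Corollary~\ref{corollary:t=chi}(iii), and the classical Ismail-type coincidences already available in the abelian setting. First I would observe that every infinite minimal abelian group $G$ is precompact by Theorem~\ref{PS}, hence its completion $K$ is a compact abelian group; by Theorem~\ref{crit} (or Corollary~\ref{cororllary:crit}), $G$ is an essential, and in particular locally essential, subgroup of $K$. Since $G$ is infinite it is non-discrete: indeed, a discrete essential subgroup of a compact group would force $G$ itself to be compact (being closed in $K$), contradicting Theorem~\ref{Steph} unless $G$ is finite. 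With $G$ non-discrete, locally minimal and (locally) precompact, Corollary~\ref{super-sequence:in:minimal:groups} applies and yields a super-sequence $S\subseteq G$ converging to $0$ with $|S|=\chi(G)$, whence $\seq(G)=\chi(G)^+$.

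The remaining task is to upgrade $\chi(G)$ to $w(G)$. Here I would invoke Corollary~\ref{corollary:t=chi}(iii), which gives $w(G)=t(G)$ for every minimal abelian group; combined with the trivial inequality $t(G)\le\chi(G)\le w(G)$ this forces $\chi(G)=t(G)=w(G)$. Substituting into the equality just obtained gives $\seq(G)=\chi(G)^+=w(G)^+$, which is exactly the assertion. (Alternatively, since $G$ is a dense essential subgroup of the compact group $K$, one has $w(G)=w(K)$ and $\chi(G)=\chi(K)$, and Ismail's Theorem~\ref{Ismail:theorem}(i) together with $t(G)=t(K)$ could be used in place of Corollary~\ref{corollary:t=chi}(iii); but routing through the corollary is cleanest.)

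I do not anticipate a genuine obstacle here, as the statement is essentially a repackaging of the preceding corollary together with the weight/character coincidence; the only point requiring a word of care is the verification that an infinite minimal abelian group is non-discrete, so that Corollary~\ref{super-sequence:in:minimal:groups} is applicable. This is immediate from Stephenson's Theorem~\ref{Steph}: a discrete group is locally compact, so if it were also minimal it would be compact, hence finite. Everything else is a chain of trivial inequalities and citations to results established earlier in the paper.
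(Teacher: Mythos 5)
Your proof is correct and follows essentially the same route as the paper: precompactness via Theorem~\ref{PS}, non-discreteness from infiniteness, and then Corollary~\ref{super-sequence:in:minimal:groups}. The only difference is that you spell out the identification $\chi(G)=w(G)$ (via Corollary~\ref{corollary:t=chi}(iii), or equivalently via $\chi(G)=\chi(K)=w(K)=w(G)$ for the compact completion $K$), which the paper leaves implicit.
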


\begin{proof} Note that $G$ is precompact by Theorem \ref{PS}. Being infinite, $G$ cannot be discrete. Now Corollary \ref{super-sequence:in:minimal:groups} applies.
\end{proof}

For non-abelian groups, we have the following result:
    
\begin{theorem}
\label{super-sequence:in:minimal:connected:precompact:groups}
$\seq(G) = \chi(G) ^+ =w(G) ^+$ for every non-trivial, locally minimal, precompact connected group $G$.
\end{theorem}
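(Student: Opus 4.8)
The plan is to reduce the non-abelian connected case to the abelian results already proved, specifically to Corollary~\ref{cor:super-seq:in:minimal}, by passing to a suitable abelian quotient or subgroup. Let $G$ be a non-trivial, locally minimal, precompact connected group. Since $G$ is precompact, its completion $K$ is a compact connected group, and by Theorem~\ref{crit} the subgroup $G$ is locally essential in $K$. The key structural fact I would invoke is the standard description of compact connected groups: $K/\overline{K'}$ is a compact connected abelian group, and because $K$ is connected and non-trivial, $K/\overline{K'}$ is either non-trivial or $K$ is perfect. The first subcase is the easy one; the second (perfect, even semisimple-like) case is where the real work lies, and I expect it to be the main obstacle.

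In the first subcase, suppose $A := K/\overline{K'}$ is non-trivial. Let $\pi\colon K\to A$ be the quotient map. The image $\pi(G)$ is a subgroup of the compact connected abelian group $A$. The crucial point is to check that $\pi(G)$ is again \emph{locally essential} (equivalently dense and locally essential, hence locally minimal by Theorem~\ref{crit}) in $A$; this should follow from local essentiality of $G$ in $K$ together with the fact that $\overline{K'}$ is a characteristic closed subgroup, using that quotients interact well with the local-essentiality condition of Definition~\ref{def:essential:local:essential}. Moreover one needs $\chi(A) = \chi(K)$; this holds because the gap $\chi(K)$ versus $\chi(A)$ is controlled — in a compact connected group $K$ one has $w(K) = w(\overline{K'})\cdot w(A)$, and the semisimple part $\overline{K'}$ is a product of simple compact Lie groups, so its weight is at most $\chi(K)\cdot\omega$; if $\chi(\overline{K'})$ dominated, we would need a separate argument. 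To avoid this bookkeeping, the cleanest route is: either $\chi(A) = \chi(K)$, and then $\pi(G)$ is an infinite (non-discrete) locally minimal, locally precompact abelian group with $\chi(\pi(G)) = \chi(A) = \chi(K) = \chi(G)$, so Corollary~\ref{super-sequence:in:minimal:groups} supplies a convergent super-sequence of size $\chi(\pi(G)) = \chi(G)$ in $\pi(G)$; lifting a converging super-sequence through a quotient of a precompact group back into $G$ requires a small argument but is routine since $\pi$ is a quotient map of compact groups when restricted to completions. Or $\chi(\overline{K'})$ is large, which lands us in essentially the second subcase applied to $\overline{K'}$.

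For the perfect subcase, where $K = \overline{K'}$, the structure theorem says $K$ is (topologically) a quotient of a product $\prod_{i\in I} S_i$ of simply connected compact simple Lie groups by a central subgroup, and $w(K) = |I|\cdot\omega = \chi(K)$ (assuming $K$ is non-metrizable; the metrizable case is immediate since then $\chi(G) = \omega$ and any non-discrete precompact group contains a non-trivial convergent sequence). Here I would use the center: $Z(K)$ is a compact (profinite, hence zero-dimensional) abelian subgroup, and one shows $w(Z(K)) = w(K)$ when $K$ is a product of many simple factors, since each factor contributes a non-trivial finite piece to the center. The subtle part is that $Z(G) = G\cap Z(K)$ may be too small — $G$ need not meet the center essentially. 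Instead I would work with a large independent family of simple coordinate projections $p_i\colon K\to S_i$: local essentiality of $G$ forces $p_i(G)$ to be essential in $S_i$ for all but the finitely many $i$ excluded by the witnessing neighborhood $V$, hence $p_i(G)$ is minimal, hence (a compact simple Lie group being minimal only if equal to it, by essentiality in a group with trivial or small center) $p_i(G) = S_i$ or at least dense; then using that $S_i$ contains a fixed non-trivial element $s_i\neq 1$ near $1$, I build the super-sequence coordinate-wise: for a set $J\subseteq I$ with $|J| = \chi(K)$, take the family $\{x_j : j\in J\}$ where $x_j$ agrees with $1$ off coordinate $j$ and equals (a preimage in $G$ of) $s_j$ on coordinate $j$; standard product-topology arguments show $\{x_j : j\in J\}\cup\{1\}$ is a super-sequence converging to $1$, and a diagonal/closure argument using that $G$ is locally essential lets one replace each ideal point $x_j$ by an actual element of $G$ sitting close enough to it. The main obstacle, as flagged, is precisely controlling the center/coordinate-projection behavior in the perfect case so that one genuinely lands inside $G$ rather than merely inside the completion $K$; once a super-sequence of size $\chi(K) = \chi(G)$ is found in $G$, the conclusion $\seq(G) = \chi(G)^+ = w(G)^+$ follows from the definition of $\seq$ together with the general bound $\seq(G)\le\chi(G)^+$ and the equality $\chi(G) = \chi(K) = w(K) = w(G)$ (the last equality being Theorem~\ref{weight:net-weight} combined with $nw = w$, or simply because $G$ is dense in the compact group $K$).
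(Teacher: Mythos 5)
Your proposal has two genuine gaps, both located exactly where you flag ``the main obstacle,'' and neither is resolved. First, in the semisimple case your construction goes through the coordinate projections $p_i\colon K\to S_i$: density or essentiality of $p_i(G)$ in $S_i$ tells you nothing about finding an element \emph{of $G$} supported on the single coordinate $i$, and a generic preimage in $G$ of $s_i$ has uncontrolled behaviour on all other coordinates, so the family $\{x_j\}$ you describe lives in the completion, not in $G$. The ``diagonal/closure argument'' you invoke to push it into $G$ is precisely the missing proof. The correct move is dual to yours: view each simple factor not as a quotient but as a closed \emph{normal subgroup} of $K$ (in the paper this is $L_i^*=q(\widetilde L_i)$, the image of the $i$-th simply connected factor, normal in $K'$ and hence in $K=K'Z(K)$). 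For all but finitely many $i$ this subgroup lies inside the neighbourhood $V$ witnessing local essentiality, so the \emph{definition} of local essentiality hands you a non-trivial element of $G\cap L_i^*$ directly; these elements form the desired super-sequence exactly as in Lemma \ref{lemma:0.1}. Second, in your abelian subcase the reduction to $A=K/\overline{K'}$ founders on the lifting step: the restriction of the quotient map to the dense subgroup $G$ is not open in general, so a super-sequence converging to $1$ in $\pi(G)$ need not lift to one in $G$; this is not ``routine,'' and you give no argument. The paper sidesteps quotients entirely by working with the \emph{closed central subgroup} $Z(K)$ instead of the abelianization: Lemma \ref{claim1} shows $Z(G)=G\cap Z(K)$ is locally essential in $Z(K)$, and Theorem \ref{super-sequence:in:essential:subgroups} then produces a super-sequence of size $w(Z(K))$ already sitting inside $G$.

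There is also a case you do not treat: when $G$ meets the center only finitely (so the central contribution is void) while $Z(K)$ could a priori be large. The paper closes this by observing that local essentiality of a finite $Z(G)$ in $Z(K)$ forces $Z(K)$ to have no small subgroups, hence to be a compact Lie group, hence metrizable, so that $w(K)=w(K/Z(K))$ and the semisimple super-sequence alone already has the right cardinality. Your weight bookkeeping ($\chi(A)$ versus $\chi(\overline{K'})$) gestures at this issue but does not settle it. In summary, the intended decomposition (center plus semisimple part, via the Varopoulos/covering-group description $K=K'Z(K)$ with $K'/Z(K')\cong\prod_i L_i$) is close to yours, but the mechanism for landing inside $G$ --- applying local essentiality to closed normal subgroups contained in $V$, rather than projecting and lifting --- is the heart of the proof and is absent from your write-up.
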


The proof of this theorem is given in \S \ref{Sec:10}.

\section{The existence of non-trivial convergent sequences in (locally)  minimal groups}

It is clear from the definition that every convergent super-sequence contains a non-trivial convergent sequence.
Therefore, from Corollary \ref{super-sequence:in:minimal:groups}, one gets the following

\begin{corollary}
Every infinite non-discrete locally minimal, locally precompact abelian group has a non-trivial convergent sequence.
\end{corollary}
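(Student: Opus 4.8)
The plan is straightforward: the statement is an immediate corollary of Corollary~\ref{super-sequence:in:minimal:groups} together with the trivial observation (stated in the text just above) that every convergent super-sequence contains a non-trivial convergent sequence. So the proof is essentially a one-liner, and the only real content is to check that the hypotheses of Corollary~\ref{super-sequence:in:minimal:groups} are met, namely that the group is \emph{non-discrete}, \emph{locally minimal}, \emph{locally precompact}, and \emph{abelian}.

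First I would observe that an infinite group which is not explicitly assumed discrete must be shown to be non-discrete; but the hypothesis of the corollary already includes ``non-discrete'' (in contrast to Corollary~\ref{cor:super-seq:in:minimal}, where one first invokes Theorem~\ref{PS} to conclude precompactness and then argues that an infinite precompact group cannot be discrete). So here there is nothing to do on that front: the four hypotheses ``infinite'', ``non-discrete'', ``locally minimal, locally precompact'', ``abelian'' are exactly what Corollary~\ref{super-sequence:in:minimal:groups} requires (infinity is not even needed, since a non-discrete group is automatically infinite). Thus I would simply apply Corollary~\ref{super-sequence:in:minimal:groups} to obtain a convergent super-sequence $S$ in $G$ with $|S| = \chi(G)$.

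Then I would invoke the remark from the opening of this section: since $S$ has a unique non-isolated point $x$ such that every neighbourhood of $x$ contains all but finitely many points of $S$, one can enumerate a countably infinite subset $\{s_n : n \in \omega\} \subseteq S \setminus \{x\}$ and check directly that $s_n \to x$; hence $\{s_n : n\in\omega\}\cup\{x\}$ is a non-trivial convergent sequence in $G$. This completes the proof. I do not expect any obstacle here: the whole point of having developed the machinery in the previous section is precisely to make this conclusion automatic, and the only subtlety --- ruling out the discrete case --- has been built into the hypotheses. The proof to write is therefore: ``By Corollary~\ref{super-sequence:in:minimal:groups}, $G$ contains a convergent super-sequence $S$, and every convergent super-sequence contains a non-trivial convergent sequence.''
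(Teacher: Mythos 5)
Your proof is correct and follows exactly the paper's own route: apply Corollary~\ref{super-sequence:in:minimal:groups} to get a convergent super-sequence and then extract a non-trivial convergent sequence from it. Nothing further is needed.
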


Similarly, Corollary \ref{cor:super-seq:in:minimal} implies the following

\begin{corollary}\label{corollaryconvergent:sequance}
\cite{Sh2009}
Every infinite minimal abelian group has a non-trivial convergent sequence.
\end{corollary}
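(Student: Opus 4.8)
The plan is to deduce this at once from Corollary~\ref{cor:super-seq:in:minimal}. Recall from Definition~\ref{def:s} that $\seq(G)$ is the least cardinal $\kappa$ for which $G$ carries \emph{no} convergent super-sequence of size $\geq\kappa$; hence $\seq(G)>\omega$ is equivalent to saying that $G$ contains a convergent super-sequence of size $\geq\omega$, that is, an infinite one.

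First I would note that an infinite minimal abelian group $G$ falls under the hypotheses of Corollary~\ref{cor:super-seq:in:minimal}, which then yields $\seq(G)=\chi(G)^+=w(G)^+$. Since all cardinal invariants are assumed to be infinite, $\chi(G)\geq\omega$, so $\seq(G)=\chi(G)^+\geq\omega_1>\omega$. By the observation just made, $G$ therefore contains an infinite convergent super-sequence $S$. Finally, every convergent super-sequence contains a non-trivial convergent sequence: it suffices to take any countably infinite subset of $S$ omitting the limit point together with that point, since every neighbourhood of the limit point misses only finitely many points of $S$. Applying this to $S$ gives the desired non-trivial convergent sequence in $G$.

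An equivalent, slightly more hands-on route avoids $\seq$ altogether: by Theorem~\ref{PS} an infinite minimal abelian group $G$ is precompact, and a precompact group that is discrete is finite (cover $G$ by finitely many translates of the open set $\{1\}$), so $G$ is non-discrete; then Corollary~\ref{super-sequence:in:minimal:groups} directly furnishes a convergent super-sequence in $G$ (indeed one of size $\chi(G)$), which again contains a non-trivial convergent sequence.

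I do not expect any genuine obstacle here: all of the substance has already been absorbed into Theorem~\ref{super-sequence:in:essential:subgroups} and Corollary~\ref{super-sequence:in:minimal:groups} (hence Corollary~\ref{cor:super-seq:in:minimal}). Were those unavailable, the real work would be the construction of a large convergent super-sequence inside a non-discrete locally essential subgroup of a locally compact abelian group, which is where the structure theory of locally compact abelian groups would be needed.
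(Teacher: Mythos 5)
Your proposal matches the paper's own derivation: the paper obtains this corollary directly from Corollary~\ref{cor:super-seq:in:minimal} together with the observation that every convergent super-sequence contains a non-trivial convergent sequence, and your alternative route via Theorem~\ref{PS} and Corollary~\ref{super-sequence:in:minimal:groups} is precisely how the paper proves Corollary~\ref{cor:super-seq:in:minimal} itself. The argument is correct and complete.
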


  Minimal groups without non-trivial convergent sequences were built by Shakhmatov \cite{Sh2009}. Their underlying group is a free non-abelian group. This example leaves open the question whether some minimal groups, close to being abelian, have non-trivial convergent sequences (see Question \ref{question:convergent:sequance}).

The next theorem establishes the limit of such an extension:

\begin{theorem}\label{Last:Theorem}
There exists a minimal nilpotent group of nilpotency class 2 without non-trivial convergent  sequences
having an open normal countably compact subgroup. 
\end{theorem}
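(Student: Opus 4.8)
The plan is to combine the construction of Theorem~\ref{theorem:LAST0} with a carefully chosen dense subgroup $H$ of $\T^\kappa$ that is countably compact but contains no non-trivial convergent sequences. Such groups $H$ are known to exist under various set-theoretic hypotheses (the classical Hajnal--Juh\'asz / van Douwen-type constructions, or those based on selective ultrafilters), and, more to the point for a ZFC statement, one can take a countably compact group topology on a suitable free abelian group sitting densely in $\T^\kappa$ for $\kappa$ below the first measurable — but since the theorem is stated without set-theoretic caveats, I would instead use a countably compact \emph{boolean}-type example: let $H$ be a countably compact dense subgroup of $\{0,1\}^\kappa$ (equivalently, after identifying with a subgroup of $\T^\kappa$ via torsion elements) that has no non-trivial convergent sequences; such groups exist in ZFC for $\kappa = \cont$ by the Hajnal--Juh\'asz construction, and more generally one first handles $\kappa=\cont$ and then uses that $\Sigma$-type enlargements preserve both countable compactness and absence of convergent sequences to push up to arbitrary $\kappa\ge\cont$. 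Actually the cleanest route, given that the excerpt's Example~\ref{example2} already displays $\omega$-bounded minimal groups via $\Sigma$-products, is: take $H$ to be a countably compact (but not $\omega$-bounded, since $\omega$-bounded groups of uncountable weight contain convergent sequences by Corollary~\ref{super-sequence:in:minimal:groups}) dense subgroup of $\T^\kappa$ with no non-trivial convergent sequences, for a fixed $\kappa$ for which such an $H$ exists in ZFC.

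Given such an $H$, feed it into Theorem~\ref{theorem:LAST0}: we obtain a minimal nilpotent group $G_H$ of nilpotency class $2$ having an open subgroup $L$ of index $\kappa$ with $L \cong H\times\T$. First I would arrange that this open subgroup is in fact \emph{normal}: since $L$ is open and $G_H/$(the core of $L$) is discrete, replacing $L$ by $\bigcap_{g}gLg^{-1}$ — which, $L$ being open of index $\kappa$, may fail to be open unless the index is finite — does not immediately work, so instead I would inspect the construction behind Theorem~\ref{theorem:LAST0} (a central extension of $\T$ by $H$-type data, a Heisenberg-style group) and check directly that the relevant copy of $H\times\T$ is normal; this is where I expect to lean on the explicit form of $G_H$ rather than on a black-box argument. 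Next, the key point: a convergent sequence in $G_H$ would, being eventually inside the open subgroup $L$, yield a convergent sequence in $L\cong H\times\T$; but $\T$ is metrizable and $H$ has no non-trivial convergent sequences, and $H\times\T$ has a non-trivial convergent sequence only if one of the factors does (a sequence converging in a product converges coordinatewise, and its $\T$-projection being eventually constant forces the $H$-projection to carry the convergence) — hence $L$, and therefore $G_H$, has no non-trivial convergent sequences. Finally, $L\cong H\times\T$ is countably compact because $H$ is countably compact and $\T$ is compact, and a finite (indeed countable) product of countably compact spaces is countably compact; together with normality of $L$ this gives the required open normal countably compact subgroup.

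The main obstacle is twofold. The first and genuine difficulty is producing, in ZFC, a dense subgroup $H$ of some $\T^\kappa$ that is countably compact and has no non-trivial convergent sequences: $\omega$-boundedness is ruled out by the results of Section~8 (an $\omega$-bounded group of uncountable character contains large convergent super-sequences), so one needs a strictly countably compact witness, and the standard such constructions (Hajnal--Juh\'asz for $\kappa=\cont$) are exactly what must be invoked — one should state clearly which $\kappa$ works and cite the construction, rather than claim it for all uncountable $\kappa$. The second, more routine obstacle is the normality of the open subgroup in $G_H$: Theorem~\ref{theorem:LAST0} as quoted only promises an open subgroup isomorphic to $H\times\T$, not a normal one, so one must either extract normality from the internal structure of the nilpotency-class-$2$ group built in \S\ref{MMM}, or argue that the \emph{center} of $G_H$ (which contains the $\T$-factor and is normal by definition) together with a suitable normal complement of $H$-type does the job. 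Once these two points are settled, the absence of convergent sequences and the countable compactness of the open subgroup follow from the elementary product arguments sketched above.
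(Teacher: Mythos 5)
Your plan has a fatal flaw at its central step: the open subgroup of $G_H$ produced by Theorem~\ref{theorem:LAST0} is isomorphic to $H\times\T$, and $\T$ is a compact \emph{metrizable} group, so $H\times\T$ always contains non-trivial convergent sequences (e.g.\ $(0_H,t_n)$ with $t_n\to 0$ in $\T$), no matter how pathological $H$ is. Your assertion that a convergent sequence in $H\times\T$ must have eventually constant $\T$-projection is simply false, so $G_H$ built over $K=\T^\kappa$ can never be the desired example. The paper's proof repairs exactly this defect: it runs the Heisenberg construction with $K=\Z(2)^{\cont}$, so that the pairing $\widehat K\times K\to\T$ takes values in $\Z(2)\subseteq\T$, and then passes to the subgroup $L$ of $\mathcal H(K)$ in which the central $\T$-coordinate is restricted to $\Z(2)$. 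By \cite[Lemma 5.16(C)]{DM}, $L$ is still a minimal nilpotent group of class $2$, its open normal subgroup is $\Z(2)\oplus H$ (finite times countably compact, hence countably compact, and with no non-trivial convergent sequences since $\Z(2)$ is finite and $H$ has none), and $L$ itself has no non-trivial convergent sequences. Replacing the compact connected central factor by a finite one is the essential idea you are missing.

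Two further points. First, your worry about normality of the open subgroup is a non-issue: in the explicit matrix model the open subgroup contains $Z(\mathcal H(K))=\mathcal H(K)'$, so it is automatically normal; the paper's proof of Theorem~\ref{theorem:LAST0} already records this. Second, your sourcing of $H$ is shaky: the Hajnal--Juh\'asz construction is a CH (not ZFC) construction, and the various $\kappa$'s you float are beside the point. What is actually used is the recent ZFC example of Hru\v s\'ak, van Mill, Ramos-Garc\'{\i}a and Shelah \cite{H-S} of a dense countably compact subgroup $H$ of $\Z(2)^{\cont}$ without non-trivial convergent sequences --- conveniently a Boolean group, which is precisely why the ambient $K$ is taken to be $\Z(2)^{\cont}$ rather than a power of $\T$.
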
 

The proof of this theorem is given in \S \ref{MMM}.

While the group from Theorem \ref{Last:Theorem} is locally countably compact, the 20 years old question of whether every infinite countably compact \mi \ group must have non-trivial convergent sequences 
(\cite[Question 7.2(i)]{D_open_mapping}; repeated as
\cite[Problem 23]{DS-survey} and \cite[Question 9.1(i)]{DM})
remains open.

\begin{remark}  It can be easily derived from Theorem \ref{PS} that every totally minimal nilpotent group is precompact (see \cite{D:Recent:Advances}).  We do not know whether every infinite minimal precompact nilpotent group has a non-trivial convergent sequence (see Question \ref{question:convergent:sequance}). Theorem~\ref{Last:Theorem} shows that this may fail if precompactness is weakened to
local precompactness (actually, even to local countable compactness).   
\end{remark}

Our next theorem shows that one can replace ``abelian'' with ``nilpotent'' in Corollary~\ref{corollaryconvergent:sequance} provided that minimality in its statement is strengthened to total minimality.

\begin{theorem}
\label{totally-minimal:nilpotent}
Every  infinite totally minimal nilpotent group has a non-trivial convergent sequence.  
\end{theorem}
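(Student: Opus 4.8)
The plan is to run an induction on the nilpotency class $s$ of the totally minimal nilpotent group $G$, parallel to the proof of Theorem \ref{theorem:totally_minimal}, but keeping track of convergent sequences rather than metrizability. The base case $s=1$ is exactly Corollary \ref{corollaryconvergent:sequance}: an infinite abelian minimal group has a non-trivial convergent sequence. For the inductive step, let $G$ be infinite totally minimal nilpotent of class $s>1$, and consider the central subgroup $Z(G)$. Since $G$ is minimal, $Z(G)$ is minimal \cite[Chap. 2]{DPS}, and it is abelian; since $G$ is nilpotent of class $s>1$, $Z(G)\neq\{1\}$. The quotient $G/Z(G)$ is totally minimal (all Hausdorff quotients of a totally minimal group are totally minimal, being quotients of $G$) and nilpotent of class $<s$.

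First I would dispose of the case where $Z(G)$ is infinite: then $Z(G)$ is an infinite minimal abelian group, hence has a non-trivial convergent sequence by Corollary \ref{corollaryconvergent:sequance}, and this sequence lies in $G$. So I may assume $Z(G)$ is finite. Then I would look at $G/Z(G)$: it is totally minimal nilpotent of class $<s$. If it is infinite, by the inductive hypothesis it contains a non-trivial convergent sequence $(\overline{a_n})_{n\in\omega}$ converging to some $\overline{a}$. The task is then to lift this to a convergent sequence in $G$. Since $Z(G)$ is finite, the quotient map $\pi\colon G\to G/Z(G)$ is a finite-to-one perfect map (indeed a covering map with finite fibers, and it is open); pulling back the convergent sequence together with its limit, $\pi^{-1}(\{\overline{a_n}:n\in\omega\}\cup\{\overline{a}\})$ is a compact set mapping finite-to-one onto a convergent super-sequence. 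A compact space admitting a finite-to-one continuous map onto a convergent sequence is itself a finite union of convergent sequences (more carefully: it is first countable, compact, and has only finitely many non-isolated points, hence some infinite subset is a convergent sequence). Concretely, choosing $b_n\in\pi^{-1}(\overline{a_n})$ arbitrarily gives a sequence in $G$ all of whose cluster points lie in the finite set $\pi^{-1}(\overline{a})$; passing to a subsequence on which $b_n$ converges (possible since the closure $\overline{\{b_n\}}\subseteq \{b_n:n\in\omega\}\cup\pi^{-1}(\overline a)$ is compact and metrizable — being the continuous finite-to-one preimage of a metrizable compactum, hence of countable weight) yields a non-trivial convergent sequence in $G$, unless the chosen subsequence is eventually constant, which can be avoided by a diagonal choice since the fibers are finite and the $\overline{a_n}$ are distinct.

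It remains to handle the case where $Z(G)$ is finite \emph{and} $G/Z(G)$ is finite; but then $G$ itself is finite, contradicting the assumption that $G$ is infinite. This closes the induction. The main obstacle, and the step deserving the most care, is the lifting argument: making precise that a non-trivial convergent sequence in $G/Z(G)$ pulls back, through the finite-central quotient, to a non-trivial convergent sequence in $G$. The cleanest route is to observe that $\pi$ restricted to the preimage of the (compact, metrizable, countable) set $\{\overline{a_n}:n\in\omega\}\cup\{\overline a\}$ is a perfect finite-to-one surjection onto a compact metrizable space, hence its domain $C$ is compact and metrizable; $C$ is infinite (since the $\overline{a_n}$ are distinct) and has all but finitely many points isolated (the non-isolated points of $C$ map into $\{\overline a\}$, so there are at most $|Z(G)|$ of them), so $C$, being an infinite compact metrizable space with finitely many accumulation points, contains a non-trivial convergent sequence, which is the desired sequence in $G$.
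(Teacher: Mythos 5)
Your proof is correct and follows essentially the same route as the paper: induction on the nilpotency class, splitting on whether $Z(G)$ is finite or infinite, and lifting a convergent sequence from $G/Z(G)$ when $Z(G)$ is finite. The only divergence is in the lifting step. The paper disposes of it in one line: since $Z(G)$ is finite (hence discrete), the quotient map $q\colon G\to G/Z(G)$ is a local isomorphism, so one inverts $q$ on a neighbourhood of a preimage of the limit and pulls back a tail of the sequence directly. Your compactness argument reaches the same conclusion, but one intermediate claim in it is false as a general statement: a finite-to-one continuous preimage of a metrizable compactum need not have countable weight (the double arrow space maps two-to-one onto $[0,1]$ and is not metrizable). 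This does no damage here, because the set $C=\pi^{-1}(\{\overline{a_n}:n\in\omega\}\cup\{\overline a\})$ is \emph{countable} and compact Hausdorff, hence metrizable, and in any case your other observation --- that an infinite compact Hausdorff space with only finitely many non-isolated points contains a non-trivial convergent sequence --- already suffices and is the cleanest way to finish; the metrizability detour can simply be dropped.
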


\begin{proof} We are going to use an  inductive argument. Let $s$ denote the nilpotency class of $G$, so that the quotient group $G/Z(G)$ has  nilpotency class $s-1$.  If $s=1$, then the group $G$ is abelian and the assertion follows from Corollary \ref{corollaryconvergent:sequance}. Assume that  $s>1$ and the statement is true for $s-1$. If the center $Z(G)$ is infinite, then it contains a  non-trivial convergent sequence by Corollary \ref{corollaryconvergent:sequance} being a minimal abelian group. Assume that $Z(G)$ is finite. Then $G/Z(G)$ is totally minimal and has  nilpotency class $s-1$, so by the inductive hypothesis there exists a non-trivial convergent sequence $(y_n)$ in $G/Z(G)$. Since $Z(G)$ is finite, the quotient map $q: G \to G/Z(G)$ is a local isomorphism, hence there exists a non-trivial convergent sequence in $G$ as well. 
\end{proof}

Theorem \ref{totally-minimal:nilpotent} gives a positive answer to \cite[Question 7.2(iii)]{D_open_mapping} (repeated 
in \cite[Question 9.1(iii)]{DM}) in the class of nilpotent groups.

\section{Relation between the size and the weight of a locally minimal, precompact abelian group}
\label{sec:8}

For infinite cardinals $\tau$ and $\lambda$, the symbol $\Min(\kappa,\lambda)$ 
denotes the following statement:  There exists a sequence of cardinals $\{\lambda_n: n\in \N\}$ such that 
\begin{equation}\label{min-def}
\lambda=\sup_{n\in\N}\lambda_n\ \  \text{and}\ \  \sup_{n\in\N} 2^{\lambda_n}\leq \kappa \leq 2^{\lambda}.
\end{equation}
We say that the sequence $\{\lambda_n: n\in \N\}$ as above {\em witnesses\/} $\Min(\kappa,\lambda)$. Following \cite{D-GB-S}, 
call a cardinal number $\kappa$ a {\em Stoyanov cardinal}, provided that either $\kappa$ is finite, or $\kappa$ is infinite
and satisfies $\Min(\kappa,\sigma)$ for some infinite cardinal $\sigma$.  

It was proved by Stoyanov \cite{Sto} that $|G|$ is a Stoyanov cardinal for   every minimal abelian group $G$. One of the major result from \cite{D-GB-S}, extending this theorem of Stoyanov, is that $\Min(|G|,w(G))$ holds for every infinite minimal abelian group. We extend this result 
from minimal abelian to locally minimal, $\chi$-abelian groups: 

\begin{theorem}
\label{weight:Stoyanov}
If $G$ is an infinite locally minimal, precompact $\chi$-abelian group, then $\Min(|G|,w(G))$ holds.
\end{theorem}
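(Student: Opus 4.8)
The plan is first to strip off the $\chi$-abelian hypothesis by passing to the center, and then to reduce the remaining abelian problem to the estimate of \cite{D-GB-S} for minimal abelian groups by excising a sufficiently large compact subgroup.

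\emph{Reduction to the center.} The center $Z(G)$ is closed and central in the locally minimal group $G$, hence locally minimal by \cite{LocMin}; it is precompact, being a subgroup of the precompact group $G$; and it is abelian. For any precompact group the character coincides with the weight (a neighbourhood base at the identity of a dense subgroup closes up, inside the compact completion, to a neighbourhood base at the identity of the completion), so $w(Z(G))=\chi(Z(G))$ and $w(G)=\chi(G)$; since $G$ is $\chi$-abelian, $\chi(Z(G))=\chi(G)$, and therefore $w(Z(G))=w(G)$. Writing $K$ for the compact completion of $G$, we also have $|Z(G)|\le|G|\le|K|\le 2^{w(K)}=2^{w(G)}$. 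Hence, once we know that $\Min(|Z(G)|,w(Z(G)))$ holds, say with witnessing sequence $\{\lambda_n\}$, then $\sup_n\lambda_n=w(Z(G))=w(G)$ and $\sup_n 2^{\lambda_n}\le|Z(G)|\le|G|\le 2^{w(G)}$, so $\{\lambda_n\}$ witnesses $\Min(|G|,w(G))$ as well. Thus it is enough to prove the abelian statement: \emph{$\Min(|H|,w(H))$ holds for every infinite locally minimal precompact abelian group $H$.}

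\emph{The abelian core.} If $w(H)=\omega$ then $H$ is metrizable, hence (being precompact too) of cardinality at most $\cont$, so $\Min(|H|,\omega)$ is witnessed by $\lambda_n=n$. So assume $w(H)>\omega$. By Corollary~\ref{cororllary:crit}, $H$ is dense and locally essential in its compact completion $K$, and $w(K)=w(H)>\omega$. Fix a neighbourhood $V$ of $0$ in $K$ witnessing local essentiality. Using Pontryagin duality, choose finitely many characters of $K$ whose common near-kernel is contained in $V$ and let $N$ be the intersection of their kernels; then $N$ is a closed subgroup of $K$ with $N\subseteq V$, the quotient $K/N$ is a compact Lie group (in particular metrizable), and, because $w(K)>\omega$, duality forces $w(N)=w(K)$. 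Since $N\subseteq V$, every non-trivial closed subgroup of $N$ meets $H$ non-trivially; thus $H\cap N$ is essential in $N$, and in particular $H\cap N\neq\{0\}$. Put $N_1=\overline{H\cap N}$; then $H\cap N$ is dense and essential in the compact group $N_1$, so $H\cap N$ is minimal by Theorem~\ref{crit}. Moreover $H/(H\cap N)$ is precompact (a quotient of the precompact group $H$) and metrizable (because $H\cap N$, being the trace on $H$ of the $G_\delta$ subgroup $N$ of $K$, is $G_\delta$ in $H$), hence $w(H/(H\cap N))=\omega$; since the weight is sub-multiplicative in extensions, $w(H)\le w(H\cap N)\cdot w(H/(H\cap N))=w(H\cap N)$, while $w(H\cap N)=w(N_1)\le w(N)=w(K)=w(H)$, so $w(H\cap N)=w(H)$. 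Applying \cite{D-GB-S} to the infinite minimal abelian group $H\cap N$ gives a sequence $\{\lambda_n\}$ witnessing $\Min(|H\cap N|,w(H\cap N))=\Min(|H\cap N|,w(H))$; since $|H\cap N|\le|H|\le|K|\le 2^{w(K)}=2^{w(H)}$, the same sequence witnesses $\Min(|H|,w(H))$.

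\emph{Where the difficulty lies.} The substance of the argument is entirely in the abelian core, and specifically in converting the merely \emph{local} essentiality of $H$ in $K$ into a subgroup to which the \emph{global} minimal-abelian estimate applies without losing weight. The delicate point is to produce $N$ that is simultaneously small enough to sit inside the witnessing neighbourhood $V$ and large enough that $w(N)=w(K)$, and then to verify that neither the passage from $N$ to the closure $N_1$ of $H\cap N$ nor the passage from $H$ to $H\cap N$ drops the weight below $w(H)$; this last verification rests on sub-multiplicativity of the weight in group extensions together with the fact that a precompact metrizable group has countable weight. Everything else — the stability properties of the center of a locally minimal group, the coincidence $\chi=w$ for precompact groups, and the transfer of the witnessing sequence along $|Z(G)|\le|G|\le 2^{w(G)}$ — is routine.
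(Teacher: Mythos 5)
Your reduction to the center is exactly the one the paper uses, and your plan for the abelian core is genuinely different and attractive: excise from the completion $K$ a closed subgroup $N\subseteq V$ of full weight, observe that $H\cap N$ is essential, hence minimal, in the compact group $\overline{H\cap N}$, and then invoke the theorem of \cite{D-GB-S} that $\Min(|G|,w(G))$ holds for every infinite minimal abelian group. (The paper instead re-derives the cardinality lower bounds from scratch, via Theorem~\ref{structure:theorem} together with Lemmas~\ref{claim1}, \ref{claim2} and \ref{size:of:essential:subgroups:of:powers}.) Unfortunately, the step on which your entire reduction hinges --- the equality $w(H\cap N)=w(H)$ --- is not established by your argument.

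The three-space inequality $w(H)\le w(H\cap N)\cdot w(H/(H\cap N))$ is valid only for the \emph{quotient} topology on $H/(H\cap N)$, and your justification that this quotient is metrizable does not work: the fact that $H\cap N$ is a closed $G_\delta$-subgroup of $H$ yields countable \emph{pseudo}character of the quotient, not countable character, and the natural continuous injection $H/(H\cap N)\to K/N$ is a topological embedding only when $H\cap N$ is dense in $N$. What is true in general is that the quotient topology on $H/(H\cap N)$ is the one induced from $K/M$, where $M=\overline{H\cap N}$; so your argument really needs $w(N/M)\le\omega$, equivalently $w(M)=w(N)$. Since all you know about $M$ is that it is a closed \emph{essential} subgroup of $N$, this is exactly the non-trivial structural fact in play: that a closed essential subgroup of a compact abelian group of uncountable weight has full weight. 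That statement is true, but the only proof I see goes through Theorem~\ref{structure:theorem} (reduce $N$ to a product $\prod_p\Z(p)^{\sigma_p}\times\prod_p\Z_p^{\kappa_p}$, where an essential closed subgroup must contain each $\Z(p)^{\sigma_p}$ and, for each $p$, some $p^m\Z_p^{\kappa_p}$) --- i.e., through the same structural input the paper's proof uses directly. Note also that essentiality of $H\cap N$ in $N$ is nowhere invoked in your weight computation, yet without it the conclusion is simply false: a dense subgroup $H$ of $\T^{\omega_1}$ can meet $N$ trivially, in which case $w(H\cap N)\cdot w(HN/N)=\omega<w(H)$. So the gap is real, and closing it appears to require importing the structure theorem at precisely the point where your argument tries to avoid it.
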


The proof of this theorem is given in \S \ref{Sec:10}.

Note that ``precompact'' cannot be relaxed to ``locally precompact'' in Theorem \ref{weight:Stoyanov}, since every discrete group is both locally minimal and locally precompact. So every discrete group $G$ such that $|G|$ is not a Stoyanov cardinal will fail to satisfy the conclusion of 
Theorem \ref{weight:Stoyanov}.

It follows from Theorem \ref{weight:Stoyanov} that the cardinalities of the locally minimal precompact free abelian groups coincide with those of the minimal ones. More precisely, we have the following result: 

\begin{corollary}\label{corollary:weight:Stoyanov}
For a free abelian group $F$ the following are equivalent: 
\begin{itemize}
  \item[(a)] $F$ admits a locally minimal, precompact group topology;  
  \item[(b)] $|F|$ is a Stoyanov cardinal; 
  \item[(c)] $F$ admits a minimal group topology. 
\end{itemize}
\end{corollary}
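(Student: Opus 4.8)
The plan is to establish the cycle of implications $(c)\Rightarrow(a)\Rightarrow(b)\Rightarrow(c)$, keeping in mind that the equivalence $(b)\Leftrightarrow(c)$ is essentially known and that the only genuinely new ingredient is Theorem~\ref{weight:Stoyanov}. The implication $(c)\Rightarrow(a)$ is immediate: if $F$ admits a minimal group topology, then $F$ becomes a minimal abelian group, hence precompact by Theorem~\ref{PS} and locally minimal by~\eqref{minimal:is:locally:minimal}, so the same topology witnesses $(a)$.

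For $(a)\Rightarrow(b)$ I would first handle the degenerate case separately: if $F$ is trivial then $|F|=1$ is finite, hence a Stoyanov cardinal by definition, so assume from now on that $F$ is infinite. Let $\mathscr{T}$ be a locally minimal, precompact group topology on $F$. Since $F$ is free abelian, it is abelian, hence $\chi$-abelian, so Theorem~\ref{weight:Stoyanov} applies to the infinite group $(F,\mathscr{T})$ and gives that $\Min(|F|,w(F))$ holds. As all cardinal invariants are assumed to take infinite values, $w(F)$ is an infinite cardinal; thus $|F|$ is an infinite cardinal satisfying $\Min(|F|,\sigma)$ with $\sigma=w(F)$ infinite, i.e. $|F|$ is a Stoyanov cardinal, which is $(b)$.

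The implication $(b)\Rightarrow(c)$ is the one that is not a formal consequence of what is proved in this paper; it relies on the classical fact that a free abelian group whose cardinality is a Stoyanov cardinal carries a minimal group topology, which goes back to constructions of Prodanov and Stoyanov (see \cite{DPS}; see also \cite{D-GB-S}). In outline: writing $|F|=\kappa$ and fixing a sequence $\{\lambda_n:n\in\N\}$ witnessing $\Min(\kappa,\sigma)$, one constructs a compact abelian group $K$ with $w(K)=\sigma$ and $|K|=2^\sigma\ge\kappa$ --- for instance a suitable countable product $\prod_{n\in\N}\Zp^{\lambda_n}$ of powers of $p$-adic integers, with the primes $p$ taken distinct for distinct~$n$ --- together with a \emph{dense essential} subgroup $L\le K$ that is free abelian of cardinality $\kappa$. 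By Corollary~\ref{cororllary:crit} (equivalently, Theorem~\ref{crit}) such an $L$ is minimal in the subspace topology; being a free abelian group of cardinality $\kappa$ it is isomorphic to $F$, and transporting the topology along this isomorphism makes $F$ minimal.

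The only step originating in the present work is $(a)\Rightarrow(b)$, and it is merely an application of Theorem~\ref{weight:Stoyanov} after checking routine hypotheses; the only points deserving attention are the edge case $F=\{0\}$ and the observation that free abelian groups fall within the scope of Theorem~\ref{weight:Stoyanov} precisely because they are $\chi$-abelian. Accordingly, the expected main obstacle --- simultaneously arranging essentiality, freeness, and the exact cardinality dictated by the witnessing sequence for the subgroup $L$ --- lies entirely in the implication $(b)\Rightarrow(c)$ and is here bypassed by appeal to the literature rather than reproved.
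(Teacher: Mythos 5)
Your proposal is correct and follows essentially the same route as the paper: the authors likewise prove (a) $\Rightarrow$ (b) by invoking Theorem~\ref{weight:Stoyanov}, attribute (b) $\Rightarrow$ (c) to Stoyanov's classical construction \cite{Sto}, and dismiss (c) $\Rightarrow$ (a) as trivial (via Theorem~\ref{PS} and \eqref{minimal:is:locally:minimal}). The extra care you take with the trivial group and with noting that free abelian groups are $\chi$-abelian is sound but not needed beyond what the paper records.
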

\begin{proof}
Indeed, the implication (a) $\to$ (b) follows from Theorem \ref{weight:Stoyanov}, while the implication (b) $\to$ (c) is due to Stoyanov \cite{Sto}. The remaining  implication (c) $\to$ (a) is trivial.   
\end{proof}

It is important to note that the equivalence between (a) and (c) in the above corollary concerns only {\em free groups}. Indeed, there are many abelian groups admitting 
non-discrete locally minimal precompact group topologies that admit no minimal ones. Here is a series of examples: 

\begin{example}
\begin{itemize}
  \item[(a)] 
 For every positive $n\in \N$,
   the group $\Q^n$, being isomorphic to a subgroup of $\T$, admits a non-discrete,  locally minimal, precompact group topology
by Fact \ref{Last:Fact}. It was proved by Prodanov \cite{P} that $\Q^n$ admits no minimal group topology. 
  \item[(b)] Let $\pi$ be a non-empty set of primes. Then the group $G_\pi : = \bigoplus_{p\in \pi}\Z(p^\infty)$, being isomorphic to a subgroup  of $\T$, admits a  non-discrete,  locally minimal, precompact group topology by Fact \ref{Last:Fact}. It was proved in \cite{DP} that $G_\pi$ admits no minimal group topology if $\pi\ne \Prm$. 
\end{itemize}
\end{example}

The next example is dedicated to the special case of divisible torsion-free groups, which necessarily have the form $G=\Q^{(\kappa)}$
for some cardinal $\kappa$ that obviously coincides with $r(G)$. The case of $r(G) < \omega$ was discussed in item (a) of the above example. 

 \begin{example} 
Consider the group $G = \Q^{(\kappa)}$ for infinite cardinal $\kappa$. It was proved by Prodanov \cite{P} that $G$ 
admits minimal group topologies when $\omega \leq \kappa \leq \cont$. Moreover, for $\kappa > \cont$  the group $G$ 
admits minimal topologies if and only if $\kappa$ is an exponential cardinal \cite{D_div} (and in this case it admits also compact group topologies). 

On the other hand, $G$ admits a precompact locally minimal group topology when $\omega \leq \kappa \leq \cont$ (just take the topology induced on $G$ by any embedding $G \hookrightarrow \T$).  
\end{example}

\begin{example}
Using the construction from Theorem \ref{theorem:LAST0} we give now an example of a minimal nilpotent locally precompact group $G = G_H$ that does not satisfy the conclusion of Theorem \ref{weight:Stoyanov}, i.e., $\Min(|G_H|,w(G_H))$ fails. To this end pick a cardinal $\kappa>\cont$ with $\mbox{cf }\kappa =\omega$. 
 Since $|G_H| = \max\{|H|, \kappa\}$, we shall choose a dense subgroup $H$ of $\T^\kappa$ of size $ \leq  \kappa$. Then $|G_H| = \kappa$. So we have to prove that $\Min(\kappa, \kappa)$ fails. Since $\Min(\kappa, \kappa)$ holds true under GCH,  here only a consistent example can be given.  Under appropriate set-theoretical assumptions we can have $\kappa < 2^{< \kappa}$. Assume that $\kappa=\sup_{n\in\N}\lambda_n$ such that 
 (\ref{min-def})  holds, i.e., $\sup_{n\in\N} 2^{\lambda_n}\leq \kappa$. Our assumption $\kappa < 2^{< \kappa}$ implies that $\kappa < 2^\rho$ for some $\rho < \kappa$.  Pick $n\in\N$ such that  $\rho\leq \lambda_n$, then $\kappa < 2^\rho \leq 2^{\lambda_n} < \kappa$, giving a contradiction. 
 
 One can choose the subgroup $H$ of $\T^\kappa$ to be $\omega$-bounded, so in particular  \cc. Then $G_H$ will be locally \cc, in particular, locally precompact.
 \end{example}

The above example leaves open the question whether Theorem \ref{weight:Stoyanov} remains true for minimal precompact nilpotent groups (see Question \ref{question:Min}). 

\section{Necessary facts}

\begin{lemma}\label{reduction:to:essential0}
Let $\{G_i: i\in I\}$ be an infinite family of topological  groups. If $H$ is a locally essential subgroup of $G = \prod_{i\in I}G_i$, then there exists
a finite subset $J \subseteq I$ such that, considering $G_J =  \prod_{i\in I\setminus J}G_i$ as a subgroup of $G$ in the obvious way, the subgroup $H \cap G_J$ of $G_J$ is essential in $G_J$. 
\end{lemma}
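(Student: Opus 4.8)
The plan is to unwind the definition of local essentiality and reduce the problem to finitely many coordinates. By Definition \ref{def:essential:local:essential}(ii), there is a neighbourhood $V$ of $1$ in $H$ (we may take $V = H \cap W$ for some neighbourhood $W$ of $1$ in $G$) such that every closed normal subgroup $N$ of $G$ with $N \cap H = \{1\}$ and $N \subseteq \overline{W}$ — more precisely, every closed normal subgroup $N$ of $G$ contained in $W$ (or rather: whose trace on $H$ lies in $V$) — must be trivial. Since $G$ carries the product topology, we can shrink $W$ to a basic open neighbourhood of $1$, i.e.\ choose a finite subset $J \subseteq I$ and open neighbourhoods $U_i \ni 1$ in $G_i$ for $i \in J$ so that $\prod_{i\in J} U_i \times \prod_{i \in I \setminus J} G_i \subseteq W$. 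Crucially, this set contains the subgroup $G_J = \prod_{i \in I \setminus J} G_i$ (viewed inside $G$ by filling in $1$ on the $J$-coordinates). So $G_J \subseteq W$, and hence every closed normal subgroup $N$ of $G$ with $N \subseteq G_J$ and $N \cap H = \{1\}$ is witnessed by $W$ and therefore trivial.

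First I would make precise the choice of $J$ as above. Next I would take an arbitrary closed normal subgroup $N$ of $G_J$ with $N \cap (H \cap G_J) = \{1\}$, and check that $N$ is in fact a closed normal subgroup of the whole product $G$: normality is automatic because $G = \left(\prod_{i \in J} G_i\right) \times G_J$ as topological groups and $N$ is normal in the second factor (a product of normal subgroups, one of which is the whole factor, is normal), and closedness in $G_J$ plus closedness of $G_J$ in $G$ gives closedness in $G$. Moreover $N \subseteq G_J \subseteq W$, and $N \cap H = N \cap (H \cap G_J) = \{1\}$ since $N \subseteq G_J$. Hence the local essentiality of $H$ in $G$ forces $N = \{1\}$. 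This shows $H \cap G_J$ is essential in $G_J$, which is exactly the conclusion.

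The main obstacle — really the only place one must be careful — is matching the quantifier in Definition \ref{def:essential:local:essential}(ii) to the situation at hand: the definition restricts attention to closed normal subgroups contained in the distinguished neighbourhood $V$ (equivalently $W$), so I must ensure that the whole tail subgroup $G_J$ sits inside that neighbourhood, which is precisely why $J$ is taken finite and why basicness of the neighbourhood in the product topology is used. A minor secondary point to verify is that $G_J$, as a subgroup of $G$, is itself closed (it is, being a product of the full factors over $I \setminus J$ with singletons over $J$, hence an intersection of kernels of the finitely many projections $p_i$, $i \in J$), so that closed-in-$G_J$ subgroups are closed in $G$; and that $H \cap G_J$ receives the correct subspace topology as a subgroup of $G_J$, which is immediate. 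Once these bookkeeping points are in place the argument is a direct translation and requires no further calculation.
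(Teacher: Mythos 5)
Your argument is correct and is essentially the paper's own proof: choose a basic neighbourhood of $1$ in the product witnessing local essentiality, observe that it contains the tail factor $G_J$ for some finite $J$, and note that closed normal subgroups of the direct factor $G_J$ are closed normal in $G$ and lie inside the witnessing neighbourhood, so local essentiality forces them to meet $H$ nontrivially. Your extra care in justifying why normality in $G_J$ transfers to $G$ (via the direct-product decomposition) fills in a step the paper only asserts.
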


\begin{proof}
Let $V$ be a neighborhood of 1 in $G$ witnessing local essentiality of $H$ in $G$. Then there exists
a finite subset $J \subseteq I$ such that $G_J \subseteq V$. Since $G_J$ is a closed subgroup of $G$, every closed normal subgroup $N$ of $G_J$ is also a closed normal subgroup of $G$. Since it is also contained in $V$, we deduce that $N\cap G_J= N\cap G\ne \{1\}$ whenever $N \ne \{1\}$. 
This proves that $H \cap G_J$ is essential in $G_J$. 
\end{proof}

\begin{corollary}\label{reduction:to:essential}
Let $I$ be an infinite set and let $G$ be a topological group. If $H$ is a locally essential subgroup of $G^I$, then there exists a closed
subgroup $H_1$ of $H$ such that $H_1$ is topologically isomorphic to  an essential subgroup of $G^I$. 
\end{corollary}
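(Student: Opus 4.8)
\textbf{Proof plan for Corollary \ref{reduction:to:essential}.}

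The plan is to reduce directly to Lemma \ref{reduction:to:essential0} by choosing an appropriate index bijection, and then to absorb the ``finite residue'' into a single coordinate. Concretely, write $I = I \setminus J$ as a disjoint union where $J$ is the finite set provided by the lemma; the point is that for an infinite index set $I$ one has $|I \setminus J| = |I|$, so $G^{I \setminus J}$ is topologically isomorphic to $G^I$ itself. Thus I would first apply Lemma \ref{reduction:to:essential0} with $G_i = G$ for all $i \in I$ to obtain a finite $J \subseteq I$ such that, setting $G_J = \prod_{i \in I \setminus J} G \cong G^{I \setminus J}$, the subgroup $H \cap G_J$ is essential in $G_J$.

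Next I would set $H_1 = H \cap G_J$. Since $G_J$ is a closed subgroup of $G^I$ (it is the set of elements vanishing on the finite set $J$, hence an intersection of kernels of continuous projections), $H_1$ is a closed subgroup of $H$. It is essential in $G_J$ by the lemma, and $G_J \cong G^{I \setminus J}$ via the obvious restriction-of-coordinates homeomorphic isomorphism. Finally, because $I$ is infinite and $J$ is finite, $|I \setminus J| = |I|$, so any bijection $I \setminus J \to I$ induces a topological isomorphism $G^{I \setminus J} \cong G^I$ carrying $H_1$ onto an essential subgroup of $G^I$ (essentiality is obviously preserved by topological isomorphisms, since it is formulated purely in terms of closed normal subgroups and their intersections). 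This shows $H_1$ is topologically isomorphic to an essential subgroup of $G^I$, as required.

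There is really no serious obstacle here; the statement is a packaging of Lemma \ref{reduction:to:essential0}. The only point deserving a line of care is the cardinal arithmetic identity $|I \setminus J| = |I|$ for infinite $I$ and finite $J$, which guarantees $G^{I \setminus J} \cong G^I$ as topological groups; everything else (closedness of $G_J$ in $G^I$, transfer of essentiality along a topological isomorphism) is immediate from the definitions.
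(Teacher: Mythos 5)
Your proof is correct and is essentially identical to the paper's: both apply Lemma \ref{reduction:to:essential0} with all factors equal to $G$, take $H_1 = H \cap G_J$, and conclude via the topological isomorphism $G_J = G^{I\setminus J} \cong G^I$ coming from $|I\setminus J| = |I|$. The extra details you supply (closedness of $G_J$, invariance of essentiality under topological isomorphism) are exactly the points the paper leaves implicit.
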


\begin{proof} By the above lemma, there exists a finite subset $J \subseteq I$ such that for the closed subgroup  $G_J = G^{I\setminus J}$ of $G^I$, the closed subgroup $H_1= H \cap G_J$ of $H$ is essential in $G_J$. Since $G_J \cong G^I$, we are done. 
\end{proof}

\begin{remark}
\label{remark:(e)}
{\em No dense cyclic subgroup $C$ of $\T^\omega$ is locally minimal\/}. Indeed, suppose that $C$ is locally minimal. Then $C$ must be locally essential in $\T^\omega$ by Theorem \ref{crit}. By Corollary \ref{reduction:to:essential}, $C$ has a subgroup $C_1$ that is isomorphic to an essential subgroup $H$ of $\T^\omega$. Then $H$ is cyclic, as $C_1$ is cyclic. It is easy to see that $\T^\omega$ has two closed non-trivial subgroups $N_1$ and $N_2$ with trivial intersection (e.g., any two distinct coordinate subgroups of $\T^\omega$). Then $H\cap N_1$ and $H\cap N_2$ are non-trivial subgroups of $H$ with trivial intersection. Since $H$ is isomorphic to $\Z$, we get a contradiction.  
\end{remark}

\begin{lemma}\label{claim2} Let $p$ be a prime number and let $K$ be a compact abelian group of exponent $p$. Then every locally essential subgroup of $K$ contains an open subgroup of $K$ topologically isomorphic to  $K$.
\end{lemma}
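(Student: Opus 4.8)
The plan is to exploit the algebraic structure of a compact abelian group of exponent $p$, namely that such a $K$ is a vector space over the field $\mathbb{F}_p$, topologically isomorphic to $\mathbb{F}_p^{\,\kappa}$ for $\kappa = w(K)$ (this is classical: the Pontryagin dual $\widehat K$ is a discrete group of exponent $p$, hence an $\mathbb{F}_p$-vector space, so $\widehat K \cong \bigoplus_{\kappa}\mathbb{F}_p$ and dually $K \cong \mathbb{F}_p^{\,\kappa}$). So without loss of generality $K = \mathbb{F}_p^{\,\kappa}$, and the target is to show that a locally essential subgroup $G$ of $K$ contains an open subgroup of $K$, i.e.\ a subgroup of the form $\mathbb{F}_p^{\,\kappa\setminus F}$ for some finite $F\subseteq\kappa$ (these are exactly the open subgroups, up to the obvious identification, since basic open subgroups of $\mathbb{F}_p^{\kappa}$ are of this shape and any such is again topologically isomorphic to $\mathbb{F}_p^{\kappa}\cong K$).

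First I would invoke Lemma~\ref{reduction:to:essential0} (with $G_i = \mathbb{F}_p$ for each $i<\kappa$): there is a finite subset $J\subseteq\kappa$ such that, writing $K_J = \mathbb{F}_p^{\,\kappa\setminus J}$ viewed as a closed subgroup of $K$, the subgroup $G\cap K_J$ is \emph{essential} in $K_J$. Note $K_J$ is itself a compact abelian group of exponent $p$, and it is an open subgroup of $K$ topologically isomorphic to $K$. So the problem reduces to the following purely ``essential'' statement: an essential subgroup $H$ of $L:=\mathbb{F}_p^{\,\lambda}$ must contain $L$ itself, i.e.\ $H = L$. Once this is proved, $G\cap K_J = K_J$, so $G$ contains the open subgroup $K_J$ of $K$, and $K_J\cong K$, which is exactly the conclusion.

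So the crux is: \emph{every essential subgroup $H$ of $L = \mathbb{F}_p^{\,\lambda}$ equals $L$.} Here I would argue by contradiction. If $H\subsetneq L$, then since $L$ is an $\mathbb{F}_p$-vector space and $H$ is a linear subspace, there is a continuous $\mathbb{F}_p$-linear projection issue to handle; the clean route is via duality. The annihilator $H^{\perp}\subseteq\widehat L$ is a subgroup of the discrete group $\widehat L = \bigoplus_{\lambda}\mathbb{F}_p$, and $H^{\perp}\neq\{0\}$ because $H$ is not dense --- wait, $H$ need not be dense; but actually an essential subgroup of a compact group need not be dense, so I must be careful: here I only need that $\overline H$, the closure, also contains $L$? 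No. Let me instead argue directly: pick $x\in L\setminus H$. The cyclic group $\langle x\rangle$ has order $p$, and I want a closed subgroup $N$ of $L$ with $N\cap H = \{0\}$ but $N\neq\{0\}$, contradicting essentiality. Since $L/H$ (algebraically) is a nonzero $\mathbb{F}_p$-vector space, choose a linear functional $\varphi\colon L\to\mathbb{F}_p$ vanishing on $H$ with $\varphi(x)\neq 0$; if $\varphi$ can be taken continuous, then $N:=\ker(\psi)$ for a complementary coordinate functional, or more simply $N$ a one-dimensional subspace on which $\varphi$ is injective, gives $N\cap H\subseteq\ker\varphi\cap N = \{0\}$. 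The main obstacle --- and the step deserving the most care --- is precisely producing a \emph{continuous} character of $L$ that kills $H$; equivalently, showing $\overline H \neq L$ would finish it via $\overline H{}^{\perp}\neq 0$ and then intersecting $H$ with a small coordinate subgroup inside a complement. I expect the paper handles this by noting that in $\mathbb{F}_p^{\lambda}$ every subgroup is ``coordinate-closed'' enough: if $H$ misses some point then $H$ is contained in a proper closed subgroup (a kernel of a coordinate functional after a linear change), because the continuous dual separates points and $\mathbb{F}_p^{\lambda}$ is, as a topological vector space, a product whose subspaces behave well. Modulo that duality argument, essentiality of $H$ in $L$ forces $H\cap(\text{coordinate line in a complement of a hyperplane containing }H)=\{0\}$ to be nontrivial, a contradiction, so $H=L$, completing the proof.
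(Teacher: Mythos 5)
Your reduction is exactly the paper's: identify $K$ with $\Z(p)^{\kappa}$ and use Lemma~\ref{reduction:to:essential0} to pass to a finite $J$ with $G\cap K_J$ essential in the open subgroup $K_J\cong K$. The gap is in the step you yourself flag as ``deserving the most care'': your route to showing that an essential subgroup $H$ of $L=\Z(p)^{\lambda}$ is all of $L$ goes through producing a non-trivial closed subgroup $N$ with $N\cap H=\{0\}$ by means of a \emph{continuous} character (equivalently, a closed hyperplane) annihilating $H$. This cannot work: a proper subgroup $H$ of $L$ may well be dense (take any proper dense subgroup of $\Z(p)^{\omega}$, e.g.\ the direct sum), in which case its annihilator in $\widehat{L}$ is trivial and $H$ is contained in no proper closed subgroup whatsoever --- so your suggested fix, ``if $H$ misses some point then $H$ is contained in a proper closed subgroup (a kernel of a coordinate functional after a linear change)'', is false. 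And dense subgroups are precisely the case the lemma is applied to.

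The missing step is much more elementary and uses no duality at all. For any $x\in L$ with $x\neq 0$, the cyclic subgroup $\langle x\rangle$ is isomorphic to $\Z(p)$, hence it is \emph{finite}, therefore closed, and it has no proper non-trivial subgroups. Essentiality of $H$ in $L$ gives $\langle x\rangle\cap H\neq\{0\}$, and since the only non-trivial subgroup of $\langle x\rangle$ is $\langle x\rangle$ itself, $x\in H$. (Equivalently, in your contradiction setup: if $x\notin H$ then $kx\notin H$ for all $1\le k\le p-1$, because $k$ is invertible modulo $p$; so $N=\langle x\rangle$ is already the desired non-trivial closed subgroup meeting $H$ trivially --- no character is needed.) This shows $H=L$, i.e.\ $G\supseteq K_J$, which is precisely how the paper concludes. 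With this repair your proof coincides with the paper's; without it, the argument is incomplete at its central point.
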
 

\begin{proof} Since $K$ has exponent $p$, $K$ is topologically isomorphic to $\Z(p)^I$ for a set $I$ with $|I|= w(K)$ (\cite{DPS}). Suppose that $H$ is a locally essential subgroup of $K$. Now Lemma \ref{reduction:to:essential0} implies that  there exists a finite subset $J \subseteq I$ such that, considering $K_J = \Z(p)^{I\setminus J}$ as a subgroup of $K$ in the obvious way, the subgroup $H \cap K_J$ of $K_J$ is essential. Since every non-trivial element $x$ of $K_J$ has order $p$, this implies that $x\in H$. Indeed, the subgroup $\langle x\rangle$ of $K_J$ generated by $x$ is isomorphic to $\Z(p)$, so it is closed and has no proper non-trivial subgroups. Therefore, $\langle x\rangle$ is contained in $H \cap K_J\subseteq H$. 
This proves the inclusion $K_J\subseteq H$. Finally, note that $K_J$ is an open subgroup of $K$ topologically isomorphic to $K$.
\end{proof}

\begin{lemma}\label{claim1} If $G$ is a locally essential subgroup of the topological  group $K$, then for every closed central subgroup $N$ of $K$ the subgroup $G\cap N$ of $N$ is  locally essential.
\end{lemma}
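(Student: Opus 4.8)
The plan is to unwind the definition of local essentiality for $G$ in $K$ and restrict everything to the closed central subgroup $N$. Let $V$ be a neighborhood of $1$ in $G$ witnessing that $G$ is locally essential in $K$; that is, every closed normal subgroup $M$ of $K$ with $M\subseteq V$ satisfies $G\cap M\ne\{1\}$ unless $M=\{1\}$. Put $W=V\cap N$, which is a neighborhood of $1$ in the subgroup $N$ (with its subspace topology). I claim $W$ witnesses that $G\cap N$ is locally essential in $N$.

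First I would take an arbitrary closed normal subgroup $M$ of $N$ with $M\subseteq W$ and $M\ne\{1\}$, and check that $M$ is in fact a closed normal subgroup of $K$. Closedness is immediate since $N$ is closed in $K$ and $M$ is closed in $N$. For normality: $M$ is central in $K$, because $N$ is central in $K$ and $M\subseteq N$, so in particular $M$ is normal in $K$. (This is the one place the centrality hypothesis is used, and it is exactly what makes the argument work — a merely normal subgroup of a normal subgroup of $K$ need not be normal in $K$.) Since $M\subseteq W\subseteq V$, the choice of $V$ gives $G\cap M\ne\{1\}$, and because $M\subseteq N$ we get $(G\cap N)\cap M=G\cap M\ne\{1\}$. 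This is precisely the local essentiality of $G\cap N$ in $N$, with witnessing neighborhood $W$.

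The only subtle point — and the step I expect to require the most care — is the compatibility of the two notions of "neighborhood of $1$" and "closed normal subgroup" when passing from $K$ to the subspace $N$: one must make sure that $W=V\cap N$ really is a neighborhood of $1$ in $N$ (true, since $N$ carries the subspace topology), and that a closed normal subgroup of $N$ contained in $W$ lifts to a closed normal subgroup of $K$ contained in $V$. The latter is exactly where centrality enters, and I would state that explicitly. Everything else is a direct translation of definitions, so the proof is short; I would not belabor the routine verifications.
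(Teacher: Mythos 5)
Your proof is correct and follows essentially the same route as the paper: restrict the witnessing neighborhood to $N$, and use centrality of $N$ to see that a closed normal subgroup of $N$ is a closed central (hence normal) subgroup of $K$, so the essentiality of $G$ applies. The only cosmetic point is that the witnessing neighborhood $V$ should be taken in the ambient group $K$ (so that $V\cap N$ is a neighborhood of the identity in $N$), exactly as in the paper's argument.
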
 

\begin{proof} Let $V$ be a neighborhood of 0 in $K$ witnessing local essentiality of $G$ in $K$. Fix a closed central subgroup $N$ of $K$ and let us check that the neighborhood $V \cap N$ of 0 in $N$ witnesses  local essentiality of $G\cap N$ in $N$. Indeed, let  $L$ be a closed subgroup of $N$ with $L \subseteq V\cap N$ and $L\cap (G\cap N)=\{0\}$. Then $L \cap G=\{0\}$ and $L$ is a closed central (hence, normal)  subgroup of $K$ contained in $V$. Now the local essentiality of $G$ in $K$ with respect to $V$ yields $L=\{0\}$. 
\end{proof}

The proofs of all principal results announced in \S\S  \ref{sec:4}--\ref{sec:8} use essentially the following structure theorem proved in the forthcoming paper \cite{DS1}: 

\begin{theorem}
\label{structure:theorem} Let $K$ be a non-metrizable compact abelian group. Then there exist a closed subgroup $H$ of $K$ and two sequences of cardinals $\{\sigma_p:p\in\P\}$ and $\{\kappa_p: p\in\P\}$ such that
\begin{equation}
\label{eq:1}
H\cong \prod_{p\in\P} \Z(p)^{\sigma_p} \times \prod_{p\in\P} \Z_p^{\kappa_p}
\end{equation}
and $w(H)=w(K)$.
\end{theorem}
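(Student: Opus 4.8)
The plan is to reduce the problem to the classical structure theory of compact abelian groups via Pontryagin duality. Passing to the dual $\du{K}$, which is a discrete abelian group with $|\du{K}| = w(K) > \omega$, it suffices to find a quotient $D$ of $\du{K}$ of the form $\bigoplus_{p\in\P}\Z(p)^{(\sigma_p)}\oplus\bigoplus_{p\in\P}\Z(p^\infty)^{(\kappa_p)}$ with $|D| = |\du{K}|$; then $H := \du{D}$ is the required closed subgroup of $K$, since a surjection $\du{K}\twoheadrightarrow D$ dualizes to a topological embedding $\du{D}\hra K$ with closed image, and $w(H) = |D| = |\du{K}| = w(K)$. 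Note that the dual of $\Z(p)$ is $\Z(p)$ and the dual of $\Z(p^\infty)$ is $\Z_p$, which accounts for the shape of \eqref{eq:1}.

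The core of the argument is therefore the following statement about discrete abelian groups: \emph{every uncountable abelian group $A$ has a quotient of the form $\bigoplus_{p\in\P}\Z(p)^{(\sigma_p)}\oplus\bigoplus_{p\in\P}\Z(p^\infty)^{(\kappa_p)}$ of cardinality $|A|$.} First I would dispose of the torsion-free rank: if $A/t(A)$ is uncountable, then it embeds in a $\Q$-vector space, and one can map it onto $\bigoplus \Q$; but $\Q$ is not of the allowed form, so instead one should first pass to $A/pA$ for a suitable prime $p$, which is a vector space over $\Z(p)$ — this already has the desired form $\Z(p)^{(\sigma_p)}$, provided $|A/pA| = |A|$. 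The remaining case is when $A$ is \emph{essentially torsion}, i.e. $|A/pA| < |A|$ for every prime $p$ and also $A/t(A)$ is small; here one uses that $A = \sum_p A_p$ up to a small correction, so some $p$-component $A_p$ has full cardinality, and a $p$-group of cardinality $\lambda$ maps onto either $\Z(p)^{(\lambda)}$ or $\Z(p^\infty)^{(\lambda)}$ by the structure theory of $p$-groups (decompose via a basic subgroup: if the basic subgroup has full rank one gets the $\Z(p)^{(\sigma_p)}$ summand, otherwise the divisible part $\Z(p^\infty)^{(\kappa_p)}$ carries the cardinality).

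The main obstacle I anticipate is the careful cardinal bookkeeping in the "essentially torsion" case: one must guarantee that \emph{some single} building block — one prime's elementary part, one prime's Pr\"ufer part — already attains the full cardinality $|A|$, rather than the cardinality being spread thinly across infinitely many primes with each contribution strictly smaller. When $|A|$ has uncountable cofinality this is automatic, but when $\mathrm{cf}(|A|) = \omega$ one genuinely needs the product/sum over $\P$ in \eqref{eq:1} (hence the two \emph{sequences} of cardinals $\{\sigma_p\}$, $\{\kappa_p\}$ indexed by all primes), and the correct reading of the theorem is that $H$ is a product over $p$ of such pieces whose total weight is $w(K)$; so the statement is designed precisely to accommodate this. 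I would handle it by choosing, for each $p$, $\sigma_p$ and $\kappa_p$ to be the full "$p$-local weight" of $K$ (the weight of the $p$-primary part of the relevant quotient), and then verifying $\sup_p(\sigma_p + \kappa_p) = w(K)$, which follows because $w(K) = |\du K| = \sum_p |(\du K)_p|$ plus a torsion-free part that, if uncountable, was already absorbed into a single $\Z(p)^{(\sigma_p)}$ in the first case. Finally I would reassemble: dualize the chosen quotient to obtain $H\le K$ closed with $w(H) = w(K)$, completing the proof.
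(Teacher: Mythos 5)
The paper itself does not prove Theorem~\ref{structure:theorem}: it is quoted from the forthcoming paper \cite{DS1}, so there is no in-paper argument to compare yours with, and your proposal must stand on its own. Your framework is the right one --- dualize, produce a quotient $D$ of $A=\du{K}$ with $|D|=|A|$ of the form $\bigoplus_{p}\Z(p)^{(\sigma_p)}\oplus\bigoplus_{p}\Z(p^\infty)^{(\kappa_p)}$, and dualize back --- and your identification of the dual building blocks is correct. But the case analysis has a genuine hole. The negation of ``$|A/pA|=|A|$ for some $p$'' is ``$|A/pA|<|A|$ for every $p$'', and this does \emph{not} imply that $A/t(A)$ is small: $A=\Q^{(\lambda)}$ has $A/pA=0$ for every prime, and the discrete group $\Z_p$ (torsion-free and reduced, of cardinality $\cont$, with $\Z_p/q\Z_p$ finite for every prime $q$) is a less trivial example. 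So your ``essentially torsion'' case is not the remaining case, and these groups fall through. Ironically they are handled by the route you abandon: a torsion-free group of uncountable rank $\lambda$ does surject onto $\Q^{(\lambda)}$ (this itself needs an argument --- embedding into a $\Q$-vector space yields no surjections by itself; one sends a maximal independent family, split into $\lambda$ countable blocks, onto generating sequences of the $\lambda$ copies of $\Q$), and then $\Q^{(\lambda)}$ surjects onto $\Z(p^\infty)^{(\lambda)}$, whose dual $\Z_p^{\lambda}$ \emph{is} of the allowed form. You discard $\Q$ as ``not of the allowed form'' without taking this last step.

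The second gap is in the torsion case: $t(A)$ is a subgroup of $A$, not a quotient, so when $|t(A)|=|A|$ but $|A/t(A)|<|A|$ you cannot simply invoke $t(A)=\bigoplus_p A_p$ and quotient the pieces; dually, a closed subgroup of the profinite quotient $K/c(K)$ does not lift to a closed subgroup of $K$. The missing step is to first manufacture a \emph{torsion quotient} $A/B$ with $|A/B|=|A|$, e.g.\ by taking $B$ free, meeting $t(A)$ trivially and mapping onto a maximal independent subset of $A/t(A)$, so that $|B|<|A|$ and $A/B$ is torsion. Only then does the primary decomposition $A/B=\bigoplus_p C_p$ become available as a \emph{quotient} of $A$, and this is also what actually resolves your $\mathrm{cf}(\lambda)=\omega$ worry: one quotients each summand $C_p$ separately inside the single torsion quotient $\bigoplus_p C_p$, whereas the individual quotients $A/pA$ for different $p$ do \emph{not} assemble into a quotient $\bigoplus_p A/pA$ of $A$ (already $\Z\to\prod_p\Z(p)$ has image meeting the direct sum trivially), so ``choosing $\sigma_p,\kappa_p$ to be the $p$-local weight'' is not by itself a construction. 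Your basic-subgroup argument for a single $p$-group is fine. With these two repairs the proposed proof can be completed.
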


\section{Proofs of Theorems \ref{corollary:A*} and  \ref{theorem:B*}}\label{Proofs}

\begin{fact} \cite[Chapter 3]{DPS} For every prime $p$ and every \cag\ $K$ the following conditions are equivalent:
\begin{itemize}
  \item[(a)] $K$ is an inverse limit of finite $p$-groups;
  \item[(b)] $p^nx \to 0$ for every element $x$ of $K$;
  \item[(c)] $K$ admits a structure of a topological $\Z_p$-module. 
\end{itemize}
\end{fact}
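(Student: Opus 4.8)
The plan is to reduce everything to Pontryagin duality between the \cag\ $K$ and the discrete abelian group $A=\du K$. Under this duality a finite $p$-group is carried to a finite $p$-group (since $\du{\Z(p^k)}\cong\Z(p^k)$), an inverse limit of finite groups is carried to a directed union (direct limit) of finite groups, and conversely. The first step is then to observe that (a) is equivalent to the assertion that $A$ is an abelian $p$-group (i.e., every element of $A$ has order a power of $p$): every such $A$ is the directed union of its finite subgroups, each of which is a finite $p$-group because a finitely generated subgroup of $A$ is a finite $p$-group, so dualizing yields (a); conversely, a directed union of finite $p$-groups is an abelian $p$-group, so dualizing (a) shows that $A$ is an abelian $p$-group. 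Thus: (a) $\iff$ $A=\du K$ is an abelian $p$-group.

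The second step interprets (b) dually. Since $A$ is discrete, the topology of $K=\du A$ is that of pointwise convergence on $A$, and $p^nx$ is the character $a\mapsto x(a)^{p^n}=x(p^na)$, so ``$p^nx\to 0$ in $K$'' means ``$x(p^na)\to 1$ in $\T$ for every $a\in A$''. If $A$ is an abelian $p$-group, then for each fixed $a$ one has $p^na=0$ eventually, and (b) holds. Conversely, suppose some $a\in A$ has order not a power of $p$ — either infinite order, or finite order divisible by a prime $q\ne p$. In the first case I would map $\langle a\rangle\cong\Z$ onto $\Z(q)$ for a prime $q\ne p$; in the second case I would project $\langle a\rangle$ onto its non-trivial $q$-primary cyclic direct summand $\Z(q^k)$. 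Composing with a faithful embedding $\Z(q^k)\hra\T$ produces a character of $\langle a\rangle$, which extends to a character $x$ of $A$ by injectivity (divisibility) of $\T$ in the category of abelian groups. Since $p$ is a unit modulo $q^k$, the values $x(p^na)$ run periodically through non-trivial elements of $\T$, so $x(p^na)\not\to 1$, contradicting (b). Hence (b) $\iff$ $A=\du K$ is an abelian $p$-group, and, with the first step, (a) $\iff$ (b).

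The third step fits (c) into the cycle. For (a) $\imp$ (c): writing $K$ as the inverse limit of finite abelian $p$-groups $K_i$, each $K_i$ is annihilated by some $p^{m_i}$, hence is a module over the discrete ring $\Z/p^{m_i}\Z$, which is a quotient ring of $\Z_p$; the bonding maps are $\Z_p$-linear, so $K$ inherits a $\Z_p$-module structure, and the action $\Z_p\times K\to K$ is continuous because its composition with each projection $K\to K_i$ factors as the continuous map $\Z_p\times K\to(\Z/p^{m_i}\Z)\times K_i\to K_i$ (a map between discrete spaces, preceded by the continuous reduction and projection), so by the initial-topology property of the inverse limit the action is continuous. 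For (c) $\imp$ (b): continuity of $\Z_p\times K\to K$ together with $p^n\to 0$ in $\Z_p$ gives, for each $x\in K$, that $p^nx=(p^n)\cdot x\to 0\cdot x=0$. Chaining these, (a) $\imp$ (c) $\imp$ (b) $\imp$ (a), which proves the Fact.

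The one step that needs genuine care is the converse half of the second step: one must exhibit an honest continuous character of $K$ — equivalently an element of $A=\du K$ of the right arithmetic type — that witnesses the failure of $p^na\to 0$. This rests on the elementary fact that multiplication by $p$ is a bijection of $\Z(q^k)$ when $q\ne p$, together with the extendability of characters guaranteed by the injectivity of $\T$. Everything else is a routine unwinding of Pontryagin duality and of the definition of a topological $\Z_p$-module.
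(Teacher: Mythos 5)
Your argument is correct. The paper offers no proof of this Fact, merely citing \cite[Chapter 3]{DPS}, and your Pontryagin-duality route --- reducing each of (a), (b) and (c) to the statement that the discrete dual $\widehat{K}$ is a torsion $p$-group --- is the standard argument given there, with the only delicate point (producing a character of $K$ witnessing the failure of (b) when $\widehat{K}$ has an element whose order is not a power of $p$) handled correctly via the divisibility of $\mathbb{T}$.
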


The structure of a $\Z_p$-module in item (c) is defined as follows. For $x\in K$ and $\xi\in \Z_p$ consider a sequence of integers $n_k$  in $\Z_p$ such that  $\xi = \lim_k n_k$. Then $(n_kx)$ is a Cauchy sequence in $K$. Let $\xi x = \lim_kn_kx$, this limit exists by the compactness of $K$ and does not depends on the choice of the sequence $(n_k)$ with $\xi = \lim_k n_k$ (due to item (b) of the above fact). 

A compact abelian group satisfying the above equivalent conditions for some prime $p$ is called a {\em pro-$p$-group}.  

For a  pro-$p$-group $K$ and $x\in K$, the closed subgroup generated by $x$ coincides with the cyclic $\Z_p$-submodule $\Z_px = \{\xi x: \xi\in \Z_p\}$ of $K$ generated by $x$. 

\begin{lemma}
\label{uniform:proof}
Let $Z$ be either $\Z_p$ or $\Z(p)$. Suppose that $I$ is an infinite set and $G$ is an essential subgroup of $Z^I$. Then there exist $x=(x_i)\in Z^I$
and $y=(y_i)\in Z^I$ having the following properties:
\begin{itemize}
  \item[(i)] $0\ne x_i\in G\cap Z_i$ for every $i\in I$, where $Z_i$ is the $i$th copy of $Z$ in $Z^I$;
  \item[(ii)] $y_i\not=0$ for every $i\in I$;
  \item[(iii)] $y=(y_i)\in cl_G(\bigoplus_{i\in I} \grp{x_i})$ (note that $\bigoplus_{i\in I} \grp{x_i}\subseteq G$, as (i) holds and $G$ is a subgroup of $Z^I$);
  \item[(iv)] $y\not\in cl_G(A)$ for every set $A\subseteq \bigoplus_{i\in I} \grp{x_i}$ satisfying $|A|<|I|$.
\end{itemize}
In particular, $t(G)\ge |I|$.
\end{lemma}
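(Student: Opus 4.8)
The goal is to produce, from an essential subgroup $G$ of $Z^I$ (with $Z=\Z_p$ or $Z=\Z(p)$), coordinates $x=(x_i)$ and a target point $y=(y_i)$ so that $y$ lies in the $G$-closure of the direct sum $\bigoplus_i \langle x_i\rangle$ but not in the $G$-closure of any subset of size $<|I|$. The construction of $x$ is forced by essentiality: for each $i\in I$, apply the definition of essential subgroup to the closed (central) subgroup $Z_i\cong Z$. Since $Z_i$ is topologically cyclic and a pro-$p$-group, its nontrivial closed subgroups are cofinal... more precisely, in the $\Z(p)$ case $Z_i$ has no proper nontrivial subgroup so $G\cap Z_i=Z_i$ outright, while in the $\Z_p$ case $G\cap Z_i$ is a nontrivial closed subgroup $p^{k_i}\Z_p$, which still contains $p^{k_i}$-many elements; in either case I can pick $0\neq x_i\in G\cap Z_i$. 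This gives (i), and then $\bigoplus_i\langle x_i\rangle\subseteq G$ automatically since $G$ is a subgroup. For $y$, take $y_i$ to be any nonzero element of $\langle x_i\rangle$ (for instance $y_i=x_i$), which gives (ii); then $y\in Z^I$ and it is a standard fact that $y$ lies in the closure in $Z^I$ (hence in $cl_G$ of the same set, as $y\in G$... actually $y$ need not be in $G$, but $y$ is in the closure \emph{in $G$} means in $G\cap \overline{(\cdots)}$, so I should check $y\in G$: since each $y_i=x_i\in G\cap Z_i$, finite partial sums are in $G$, but the infinite sum $y$ need not be; rather the statement ``$y\in cl_G(\cdot)$'' should be read as ``$y$ is in the closure of the set, computed inside $G$'' only if $y\in G$—so I will in fact need $y\in G$, which holds precisely when $G$ is closed under the relevant infinite sums; to be safe I take $y$ itself to be the element $x$, i.e. set $y_i=x_i$, and then $y=x$; but $x$ need not be in $G$ either. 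The cleanest fix: work with the closure in $Z^I$ throughout and note $t(G)\ge|I|$ follows from $t$ of the \emph{subspace}; re-reading (iii)--(iv), $cl_G$ must mean closure in the subspace $G$, so I do need the chosen $y$ to be a point of $G$.)

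Resolving this, the correct choice is: the set $D=\bigoplus_i\langle x_i\rangle$ is a subgroup of $G$, and its closure $\overline D$ in $Z^I$ equals $\prod_i\langle x_i\rangle$ (using compactness and that each $\langle x_i\rangle$ is closed). So $\overline D\cap G=cl_G(D)$. I want $y\in \overline D\cap G$ with all coordinates nonzero; such $y$ exists because $G$ is essential, which forces $G$ to be large—indeed essentiality gives that $G$ meets every nontrivial closed subgroup, and a density/Baire-type argument (or directly: the projection of $G$ to any finite subproduct is onto, since the kernel is a closed subgroup meeting $G$ trivially... no, one argues via essentiality on complementary factors) shows $G$ is dense in $Z^I$, so $G\cap\overline D$ is dense in $\overline D$ and in particular contains points with all coordinates nonzero (the set of such points is open and nonempty in $\overline D$). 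Fix one such $y$; this gives (iii).

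For (iv), suppose $A\subseteq D$ with $|A|<|I|$. Each $a\in A$ has finite support, but the union $S=\bigcup_{a\in A}\supp(a)$ can have size up to $|A|<|I|$, so there is $i_0\in I\setminus S$. Every element of $A$, and hence every element of $\overline{\langle A\rangle}$ (taking closure in $Z^I$), has $i_0$-th coordinate $0$ — because the $i_0$-th coordinate projection is continuous and vanishes on $A$, hence on $\overline A\supseteq cl_G(A)$. But $y_{i_0}\neq 0$ by construction, so $y\notin cl_G(A)$. This proves (iv). Finally, $t(G)\ge|I|$ is immediate: $y\in cl_G(D)$ but $y\notin cl_G(A)$ for any $A\subseteq D$ with $|A|<|I|$, which is exactly the statement that the tightness at the point $y$ is at least $|I|$.

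The main obstacle is the density claim for $G$ in $Z^I$ — establishing that an essential subgroup of $Z^I$ is large enough that $G\cap\overline D$ contains points with all nonzero coordinates. I expect this to follow from Lemma \ref{reduction:to:essential0}-style arguments together with the observation that, for $Z$ a pro-$p$-group of exponent dividing the module structure, a subgroup missing no nontrivial closed subgroup must project onto each coordinate (else the kernel of a coordinate projection, a nontrivial closed subgroup, would meet $G$ trivially after intersecting with the complementary factor, contradicting essentiality on that factor); iterating over finite subsets and using that $\overline D=\prod_i\langle x_i\rangle$ has the product topology then yields a point of $G\cap\overline D$ with prescribed nonzero coordinates in any finite set, and a compactness/closed-graph argument upgrades this to all of $I$.
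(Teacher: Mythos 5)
Your items (i) and (iv) and the final deduction $t(G)\ge|I|$ match the paper's argument, but the heart of the lemma --- producing a point $y\in G\cap \overline{\bigoplus_{i}\langle x_i\rangle}$ with \emph{every} coordinate nonzero --- is exactly where you flag an obstacle, and your proposed route to it does not work. First, an essential subgroup of $Z^I$ need not be dense: $p\Z_p^{I}$ is a proper \emph{closed} essential subgroup of $\Z_p^{I}$ (if $N$ is a nontrivial closed subgroup and $0\ne n\in N$, then $0\ne pn\in N\cap p\Z_p^{I}$ by torsion-freeness), so ``essentiality forces density'' is false, and your parenthetical argument via kernels of coordinate projections says nothing, since those kernels always meet $G$. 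Second, even granting density of $G\cap\overline{D}$ in $\overline{D}=\prod_i M_i$, the set of points of $\overline{D}$ with all coordinates nonzero is an intersection of $|I|$-many open sets, not an open set, so density would not place a point of $G$ inside it; and the closing ``compactness upgrade'' from finitely many prescribed nonzero coordinates to all of $I$ produces at best a point of the closure of $G$, not a point of $G$.

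The paper closes this gap with one further application of essentiality, this time to the closed subgroup $L=\overline{\langle x\rangle}=\Z_p x$ generated by the diagonal element $x=(x_i)$ itself: essentiality yields $\eta\in\Z_p$ with $0\ne y=\eta x\in G$. If $Z=\Z_p$, then $y_i=\eta x_i\ne 0$ for every $i$ because $\Z_p$ is a domain. If $Z=\Z(p)$, then $p\nmid\eta$ (otherwise $\eta=p\zeta$ and $\eta x=\zeta(px)=0$), so $\eta$ is a unit of $\Z_p$ and again every $y_i=\eta x_i\ne 0$. This $y$ lies in $G$ by construction and in $\prod_i\Z_p x_i=\overline{\bigoplus_i\langle x_i\rangle}$, which gives (ii) and (iii) simultaneously; your steps (iv) and the tightness conclusion then go through verbatim. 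You would need to import this idea (or an equivalent device guaranteeing a point of $G$ itself with full support) to complete the proof.
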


\begin{proof} Since $G$ is essential in $Z^I$, for every $i\in I$ one can fix a non-zero element $x_i\in G\cap Z_i$. Now item (i) holds.

Let $L$ be the closed subgroup of $Z^I$ generated by $x=(x_i)\in Z^I$. Since $Z$ is a compact $\Z_p$-module,
$L$ coincides with the cyclic $\Z_p$-submodule $\Z_px=\{\eta x:\eta\in \Z_p\}$ of $Z^I$. Since $G$ is essential in $Z^I$, there exists $\eta\in \Z_p$ such that $0\ne y=\eta x \in G$. In particular, $\eta\not=0$. Let $y=(y_i)$. Then $y_i=\eta x_i$ for every $i\in I$.

(ii) Fix $i\in I$. If $Z=\Z_p$, then $y_i=\eta x_i\not=0$, as  both $\eta$ and $x_i$ are non-zero and the ring $\Z_p$ has no divisors of $0$. Suppose now that $Z=\Z(p)$.  Note that the element $\eta\in \Z_p$ cannot be divisible by $p$, since otherwise $\eta = p\zeta$  for some $\zeta\in\Z_p$, so $\eta x= p\zeta x = \zeta (px)= 0$, which contradicts our assumption that $y=\eta x \ne 0$.  Therefore,  $\eta$ is an invertible element of the ring $\Z_p$, so $y_i=\eta x_i\not=0$, as $\eta^{-1} y_i = \eta^{-1}(\eta x_i) = x_i \ne 0$. 

(iii) For $i\in I$, let $M_i$ denote the submodule $\Z_px_i$ of $Z_i$, i.e., the closure of $\hull{x_i}$ in $Z_i$. Then $y\in \prod_{i\in I} M_i$. Since $ \hull{x_i}$ is dense in $M_i$ for each $i\in I$, we conclude that $M=\bigoplus_{i\in I} \grp{x_i}$ is dense in $ \prod_{i\in I} M_i$. 
In particular, $y \in cl_Z(M)$. Since $M\subseteq G$ and $y\in G$, this implies $y\in cl_G(M)$. 

(iv) Since $A\subseteq \bigoplus_{i\in I} \grp{x_i}$, $|A|<|I|$ and $I$ is infinite,  there exists $J\in[I]^{<|I|}$ such that $A\subseteq \bigoplus_{i\in J} \grp{x_i}$. Since $J\subseteq I$ and $|J|<|I|$, we can choose $i_0\in I\setminus J$. Note that $a_{i_0}=0$ for every $a=(a_i)\in A$.
On the other hand, $y_{i_0}\not=0$ by item (ii). This shows that  $y\not\in cl_G(A)$.

The final sentence of the statement of our lemma follows from items (iii) and (iv).
\end{proof}

\begin{corollary}\label{lemma1} 
If $\kappa$ is an infinite cardinal, $p\in\Prm$ and $G$ is a locally essential subgroup of $\Z_p^\kappa$, then $t(G)\ge\kappa$.
\end{corollary}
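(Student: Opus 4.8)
The plan is to reduce the case of a \emph{locally} essential subgroup of $\Z_p^\kappa$ to the case of an essential subgroup of a power of $\Z_p$, and then invoke Lemma \ref{uniform:proof} directly. First I would apply Corollary \ref{reduction:to:essential} with $G = \Z_p$ and $I = \kappa$: since $G$ (the group in the statement of our corollary) is a locally essential subgroup of $\Z_p^\kappa$, there is a closed subgroup $H_1$ of $G$ that is topologically isomorphic to an essential subgroup of $\Z_p^\kappa$. Here I use that $\kappa$ is infinite, which is part of the hypothesis, so that Corollary \ref{reduction:to:essential} applies; the finite subset $J \subseteq \kappa$ it produces satisfies $\Z_p^{\kappa \setminus J} \cong \Z_p^\kappa$ because $|\kappa \setminus J| = \kappa$.

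Next I would apply Lemma \ref{uniform:proof} with $Z = \Z_p$ and $I$ the index set of size $\kappa$ for which $H_1$ (identified with an essential subgroup of $\Z_p^{(I)}$, $|I| = \kappa$) is essential. The final sentence of that lemma gives $t(H_1) \ge |I| = \kappa$. Since $H_1$ is a subspace of $G$ and tightness does not decrease when passing from a subspace to the whole space — more precisely $t(H_1) \le t(G)$ because $H_1$ is (closed, hence) a subspace of $G$ — we conclude $\kappa \le t(H_1) \le t(G)$, which is exactly the desired inequality $t(G) \ge \kappa$.

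The only point requiring a word of care is the monotonicity of tightness under taking subspaces: for an arbitrary subspace $Y$ of a space $X$ one has $t(Y) \le t(X)$, so $t(H_1) \le t(G)$ is automatic and no special property of $H_1$ (such as being closed) is actually needed for this step — closedness was only used to ensure $H_1$ is a topological group, which is irrelevant here. I expect no genuine obstacle: all the substantive work has been front-loaded into Lemma \ref{uniform:proof} (which handles the essential case and produces a witnessing free point with the required non-accumulation property) and into Corollary \ref{reduction:to:essential} (which performs the local-to-global reduction via Lemma \ref{reduction:to:essential0}). The proof is thus a two-line assembly of these two prior results.
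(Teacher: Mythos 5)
Your proof is correct and follows the paper's own argument exactly: reduce to the essential case via Corollary \ref{reduction:to:essential} (the paper phrases this as "without loss of generality $G$ is essential", which silently uses the same monotonicity of tightness under subspaces that you spell out), then apply the final sentence of Lemma \ref{uniform:proof} with $Z=\Z_p$ and $|I|=\kappa$. No gaps; your extra care about $|\kappa\setminus J|=\kappa$ and about $t(H_1)\le t(G)$ only makes explicit what the paper leaves implicit.
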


\begin{proof} 
By Corollary \ref{reduction:to:essential} we can assume without loss of generality that  $G$ is essential in $\Z_p^\kappa$. 
Applying Lemma \ref{uniform:proof} with $I=\kappa$ and $Z=\Z_p$, we get $t(G)\ge|I|=\kappa$. 
\end{proof}

\smallskip 

 \begin{proof}[\bf Proof of Theorem \ref{corollary:A*}] The conclusion obviously holds when $\chi(K)=\omega$ since $t(G)\ge\omega$
by the definition of tightness. So assume from now on that $\chi(K)\ge \omega_1$. Then $K$ is non-metrizable.

 We consider first the case when $K$ is compact. Then $\chi(K) = w(K)$. By Theorem \ref{structure:theorem}, our group $K$ contains, for every $p\in\Prm$, closed subgroups $L_p\cong \Z(p)^{\sigma_p}$ and $M_p\cong \Z_p^{\kappa_p}$, where $\sigma_p$ and $\kappa_p$ are as in the conclusion of Theorem \ref{structure:theorem}. Since $G$ contains a open  subgroup $L_p'$ of $L_p$ with $L_p'\cong L_p$ by Lemma \ref{claim2},  we have 
\begin{equation}
\label{eq:Lp}
t(G)\ge t(L_p')=t(L_p)=t(\Z(p)^{\sigma_p})\ge t(\{0,1\}^{\sigma_p})=\sigma_p
\mbox{ if }
\sigma_p
\mbox{ is infinite}.
\end{equation}
By Lemma \ref{claim1},
$G\cap M_p$ is locally essential in $M_p\cong \Z_p^{\kappa_p}$. Applying Corollary \ref{lemma1}, we conclude that
\begin{equation}
\label{eq:Mp}
t(G)\ge t(G\cap M_p)\ge \kappa_p
\mbox{ if }
\kappa_p
\mbox{ is infinite}.
\end{equation}
From the conclusion of Theorem \ref{structure:theorem} and our assumption we get $w(H)=w(K)\ge \omega_1$. Since 
\begin{equation}
\label{eq:wH}
w(H)=\omega\cdot \sup (\{\sigma_p:p\in\P\}\cup \{\kappa_p: p\in\P\})
\end{equation}
by \eqref{eq:1}, from \eqref{eq:Lp},\eqref{eq:Mp} and \eqref{eq:wH} we get
$t(G)\ge w(H)=w(K)$.

Next, we consider the general case when $K$ is locally compact. There exist $n\in \N$ and a group $K_0$ having an open compact subgroup $C$ such that $ K \cong \R^n \times K_0$.  Obviously, $\chi(C) = \chi(K)\ge \omega_1$. 
Since $G$ is locally essential in $K$, it follows from Lemma
\ref{claim1} that the subgroup $ G \cap C$ of $C$ is locally essential in the compact group $C$. Then by Theorem \ref{theorem:A*},  $\chi(C)=w(C) \leq t(G \cap C) \leq t(G)$. Therefore, $\chi(K) = \chi(C) \leq t(G)$. 
\end{proof}
 
\begin{lemma}\label{lemma7}  Let $p$ be a prime number and let $N$ be a non-trivial abelian pro-$p$-group. 
Then no  locally essential subgroup of the group $K=N^{\omega_1}$ can be radial. 
\end{lemma}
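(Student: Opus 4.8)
The plan is to suppose, for contradiction, that $G$ is a radial locally essential subgroup of $K=N^{\omega_1}$, where $N$ is a non-trivial abelian pro-$p$-group. By Corollary \ref{reduction:to:essential}, after passing to a closed subgroup $G_1$ of $G$ topologically isomorphic to an essential subgroup of $N^{\omega_1}$ (note $N^{\omega_1\setminus J}\cong N^{\omega_1}$ for finite $J$), and using that radiality is inherited by subspaces, I may assume outright that $G$ is \emph{essential} in $N=N^{\omega_1}$. Since $N$ is a non-trivial pro-$p$-group, it admits a non-trivial continuous homomorphism onto $\Z(p)$, so there is a closed subgroup $Z_0$ of each factor with $Z_0\cong \Z(p)$ or, working one coordinate at a time, I can extract elements of order $p$; more precisely, the idea is to run the construction of Lemma \ref{uniform:proof} with $Z=\Z(p)$ embedded in $N$. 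Essentiality of $G$ in $N^{\omega_1}$ lets me choose for each $i<\omega_1$ a non-zero element $x_i$ in $G$ lying in the $i$-th copy $N_i$, and then (imitating the argument of Lemma \ref{uniform:proof}, using the $\Z_p$-module structure of $N^{\omega_1}$ and essentiality again) produce an element $y=(y_i)$ with $y_i\neq 0$ for all $i<\omega_1$, lying in $cl_G\big(\bigoplus_{i<\omega_1}\grp{x_i}\big)$ but not in $cl_G(A)$ for any $A\subseteq \bigoplus_{i<\omega_1}\grp{x_i}$ with $|A|<\omega_1$.

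Now I invoke radiality at the point $y$ in the closure of the set $M=\bigoplus_{i<\omega_1}\grp{x_i}$. Radiality gives an ordinal $\delta$ and a net $(a_d)_{d<\delta}$ of elements of $M$ converging to $y$. Each $a_d$ has finite support $S_d\subseteq\omega_1$; set $S=\bigcup_{d<\delta}S_d$, a set of size $\le|\delta|$. The net lives inside $\bigoplus_{i\in S}\grp{x_i}$, so by item (iv)-type reasoning we must have $|S|\ge\omega_1$, hence $|\delta|\ge\omega_1$, i.e. $\delta\ge\omega_1$. The key point is then a cofinality/eventual-support argument: for each coordinate $i<\omega_1$, the net $(a_d(i))_{d<\delta}$ converges to $y_i\neq 0$ in $N_i$, and $y_i$ has a neighbourhood $U_i$ in $N_i$ missing $0$; so there is $c_i<\delta$ with $a_d(i)\in U_i$, in particular $a_d(i)\neq 0$, for all $d\ge c_i$. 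Since $\mathrm{cf}(\omega_1)=\omega_1$ and we will have arranged $\delta$ to have uncountable cofinality (replacing $\delta$ by $\mathrm{cf}(\delta)$ via a cofinal subnet, which preserves convergence), the supremum $c^*=\sup_{i<\omega_1}c_i$ is $<\delta$ — this is where I must be slightly careful: if $\mathrm{cf}(\delta)=\omega_1$ the sup of $\omega_1$-many ordinals below $\delta$ need not be $<\delta$, so instead I fix a single $d^*\ge$ (any tail index) and count supports. For that single $a_{d^*}$ beyond enough of the $c_i$'s — better, take any $d^*$ with $d^*\ge c_i$ for uncountably many $i$, which exists by a pigeonhole on the regressive-type map $i\mapsto c_i$ — the element $a_{d^*}$ would have to be non-zero in uncountably many coordinates, contradicting finiteness of its support.

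The step I expect to be the main obstacle is precisely this last cofinality bookkeeping: turning "the net converges coordinatewise and each coordinate-limit is non-zero" into "some single net element has infinite support." The clean way is: fix a tail, i.e. pick $c<\delta$ large enough that the set $W=\{i<\omega_1: c_i\le c\}$ is uncountable (possible because the $c_i<\delta$ and, after passing to $\mathrm{cf}(\delta)=\omega_1$, the map $i\mapsto c_i$ is a function $\omega_1\to\omega_1$, hence bounded on an uncountable set by Fodor-type / pressing-down or simply because a function $\omega_1\to\omega_1$ cannot have all fibers of its "$\le\alpha$" pre-images countable for every $\alpha$ unless it is unbounded cofinally — I will use that $\omega_1$ is regular to get $\sup$ of a countable subfamily works, and handle the $\omega_1$-case by taking $c_i$ constant on an uncountable set via pigeonhole into $\delta=\omega_1$). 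Then for any $d>c$ the element $a_d$ satisfies $a_d(i)\neq 0$ for every $i\in W$, so $|\supp(a_d)|\ge|W|\ge\omega_1$, contradicting $a_d\in M=\bigoplus_{i<\omega_1}\grp{x_i}$ (whose elements have finite support). This contradiction shows no such radial $G$ exists, completing the proof.
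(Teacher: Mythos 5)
Your overall strategy --- reduce to an essential subgroup, run Lemma \ref{uniform:proof} to get $y$ with $y_i\neq 0$ for all $i<\omega_1$ lying in $cl_G(M)$ where $M=\bigoplus_{i<\omega_1}\grp{x_i}$, then use radiality to produce a well-ordered net in $M$ converging to $y$ and contradict the finiteness of supports --- matches the paper's up to the final combinatorial step, and it is sound. But that final step, as you wrote it, contains a genuine error. Having chosen, for each $i<\omega_1$, an index $c_i<\delta$ such that $a_d(i)\neq 0$ for all $d\ge c_i$, you claim there is a single $c<\delta$ with $\{i<\omega_1: c_i\le c\}$ \emph{uncountable}, justified by ``Fodor-type / pressing-down'' or ``pigeonhole''. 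This is false: the map $i\mapsto c_i$ is an arbitrary function $\omega_1\to\delta$, not a regressive one, so Fodor does not apply, and pigeonhole into $\omega_1$ gives nothing; if $c_i=i+1$ (with $\delta=\omega_1$), then $\{i:c_i\le c\}$ is countable for every $c<\delta$. Fortunately your argument only needs the much weaker statement that some tail-set $E_c=\{i<\omega_1:c_i\le c\}$ is \emph{infinite}, since every element of $M$ has finite support. This weaker statement is provable: the $E_c$ increase with $c$ and their union is $\omega_1$, so if all were finite their cardinalities would form a non-decreasing family of naturals that is either bounded (whence the union is finite, absurd) or unbounded (whence, picking countably many indices along which the sizes tend to infinity, either their supremum $c^*$ is $<\delta$ and $E_{c^*}$ is infinite, or $\delta$ has countable cofinality and $\omega_1$ is a countable union of finite sets, absurd). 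With that repair, any $a_d$ with $c\le d<\delta$ has $\supp(a_d)\supseteq E_c$ infinite, contradicting $a_d\in M$. The repaired argument is in fact more elementary than the paper's, which instead invokes Leek's internal characterization of radiality to get a faithfully indexed transfinite sequence of regular length (forced to equal $\omega_1$ by Lemma \ref{uniform:proof}(iv)), thins it to one with faithfully indexed supports, applies the $\Delta$-system lemma, and then derives the contradiction from a \emph{single} coordinate chosen outside the root.

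A second, smaller problem is your reduction to $Z=\Z(p)$. A non-trivial abelian pro-$p$-group $N$ need not contain a copy of $\Z(p)$: for $N=\Z_p$ there are no elements of order $p$, and the existence of a continuous surjection onto $\Z(p)$ does not yield a subgroup isomorphic to $\Z(p)$. What is true (and what the paper uses) is that $N$ contains a closed subgroup $Z$ isomorphic to either $\Z_p$ or $\Z(p)$ (take $\Z_p x$ for some $x\neq 0$), and Lemma \ref{uniform:proof} is stated precisely so as to cover both cases; one should also record that $G\cap Z^{\omega_1}$ remains essential in $Z^{\omega_1}$, which holds because closed subgroups of $Z^{\omega_1}$ are closed (normal) subgroups of $N^{\omega_1}$.
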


\begin{proof}  By Corollary \ref{reduction:to:essential} we can assume without loss of generality that $G$ is essential. Being a $\Z_p$-module, $N$  contains a closed subgroup $Z$ that is either isomorphic to $\Z_p$, or  isomorphic to $\Z(p)$. Moreover, $G_1= G\cap Z^{\omega_1}$ is essential in $Z^{\omega_1}$. Hence, replacing $G$ by its subgroup $G_1$, we shall assume from now on that $N=\Z_p$ or $N=\Z(p)$. 

Apply Lemma \ref{uniform:proof} with $I=\omega_1$ and $Z=N$ to get elements $x,y\in Z^I=N^{\omega_1}$ as in the conclusion of this lemma.
Define $M=\bigoplus_{i\in I} \grp{x_i}\subseteq G$. Then $y\in cl_G(M)$ by Lemma \ref{uniform:proof} (iii).

Suppose that $G$ is radial. By \cite[Lemma 2.2]{Leek}, there exist a regular cardinal $\gamma$ and a faithfully indexed transfinite sequence $\zeta=(z_\lambda)_{\lambda<\gamma}$ of points of $M$ converging to $y$. Clearly, $\gamma\le|M|=\omega_1$. 

We claim that the equality $\gamma=\omega_1$ holds. Indeed, suppose that $\gamma<\omega_1$. Then $\gamma\le\omega$, as $\gamma$ is a cardinal. Now the set $A=\{z_{\lambda}:\lambda<\gamma\}\subseteq M$ is at most countable, so $y\not\in cl_G(A)$ by item (iv) of Lemma \ref{uniform:proof}. On the other hand, $y\in cl_G(A)$ because $\zeta$ converges to $y$. This contradiction finishes the proof of the equality $\gamma=\omega_1$.

Since the transfinite sequence $\zeta=(z_\lambda)_{\lambda<\omega_1}$ is faithfully indexed, for every countable subset $S$ of $\omega_1$, the set $\{\lambda<\omega_1: supp(z_\lambda)\subseteq S\}$ is at most countable. Therefore, we can use a straightforward transfinite induction to find a cofinal subset $B$ of $\omega_1$ such that the family $\{supp(z_\beta):\beta\in B\}$ is faithfully indexed as well.

Apply the $\Delta$-system lemma \cite{J} to the faithfully indexed family $\{supp(z_{\beta}):\beta\in B\}$ of finite subsets of $\omega_1$ to find an uncountable set $C\subseteq B$ such that $\{supp(z_{\alpha}):\alpha\in C\}$ forms a $\Delta$-system; that is, there exists a finite set $R\subseteq \omega_1$ such that the family $\{supp(z_{\alpha})\setminus R:\alpha\in C\}$ consists of pairwise disjoint sets. Clearly, $C$ is cofinal in $\omega_1$.

Pick any $i\in \omega_1\setminus R$. As $y_i\ne 0$ by item (ii) of Lemma \ref{uniform:proof}, there exists a neighborhood $U_i$ of $y_i$ in $Z_i$ with $0\not \in U_i$. Then $W=G\cap (U_i\times \prod_{j\in \omega_1\setminus \{i\}}Z_j)$ is a neighborhood of $y$ in $G$ such that  $z_\alpha\in W$ only if $i\in supp(z_\alpha)$. Since  $i \not\in R$, this is possible for at most one $\alpha\in C$. Thus, $|\{\alpha\in C: z_\alpha\in W\}|\le 1$.

On the other hand, since $\zeta$ converges to $y$ and the set $C$ is cofinal in $\omega_1$, the cofinal subsequence $(z_\alpha)_{\alpha\in C}$ of $\zeta$ converges to the same limit $y$. Since $W$ is an open neighbourhood of $y$ in $G$, the set $\{\alpha\in C: z_\alpha\in W\}$ must be cofinal in $C$, giving a contradiction.
\end{proof}
 
\begin{proof}[\bf Proof of Theorem \ref{theorem:B*}]
Let $G$ be a locally essential radial subgroup of a \cag\  $K$.
If $K$ is metrizable, then we are done. Suppose that $K$ is non-metrizable. By Theorem \ref{structure:theorem}, there exists $p\in\Prm$ such that (under the notations of this theorem) either $\sigma_p\ge \omega_1$ or $\kappa_p\ge \omega_1$. Therefore,  $K$ contains a closed subgroup $H$ such that either $H\cong \Z(p)^{\omega_1}$
or $H\cong \Z_p^{\omega_1}$. By Lemma \ref{claim1}, $G\cap H$ is a  locally essential subgroup of $H$. 
Hence we can apply Lemma \ref{lemma7} to claim that $G \cap H$ is not radial. Consequently, $G$ is not radial either, a contradiction.  \end{proof}

\section{Proof of Theorems \ref{theorem:LAST0} and \ref{Last:Theorem}}\label{MMM}

\begin{proof}[\bf Proof of Theorem \ref{theorem:LAST0}]
Let $\kappa$ be an uncountable cardinal, let $K= \T^\kappa$ and $\widehat K = \Z^{(\kappa)}$. Denote by 
${\mathcal H}(K)$ 
the Heisenberg group introduced by Megrelishvili in \cite{Meg} (see also  \cite{DMeg}) defined as follows. The topological group $\mathcal H(K)$ has as a supporting topological space  the Cartesian product $\T \times K \times \widehat K$ equipped with the product topology, and group operation defined for 
\begin{equation}\label{(Last)}
u_1=(a_1,x_1,f_1), \hskip 0.4cm u_2=(a_2,x_2,f_2)
\end{equation}
in $\mathcal H(K)$ by 
$$
u_1 \cdot u_2 = (a_1+a_2+f_1(x_2), x_1+x_2, f_1 +f_2)
$$ 
In other words,  $\mathcal H(K)$ is the semidirect product  $(\T \times K ) \leftthreetimes \widehat K$ of the groups $\widehat K$ and  $\T \times K$, where the action of $\widehat K $ on $\T \times K$ is defined by $(f,(a,x))\mapsto (a+f(x),x)$ for $f\in \widehat K $ and $(a,x) \in \T \times K $. Then $\mathcal H(K)$ is a minimal and locally compact \cite{Meg}. 

It is convenient to describe the group $\mathcal H(K)$ in the matrix form
$$
\left(\begin{matrix}
  1 &  \widehat K & \T \cr
\,0 & 1 & K \cr
\,0 & 0 & 1 \cr
                 \end{matrix}
                 \right)
$$
so that the multiplication of two elements of $\mathcal H(K)$ is carried out precisely by the rows-by-columns rule for multiplication of $3 \times 3$ matrices, where the ``product'' of an element $f\in \widehat K$ and and element $x\in K$ has value $f(x) \in \T$. For a subgroup $H$ of $K$ we denote by $G_H$ the set 
$$
 \left(\begin{matrix}  
  1 & \widehat K  & \T \cr
\,0 & 1 &  H  \cr
\,0 & 0 & 1 \cr
                 \end{matrix}
                 \right) :=  \left\{\left(\begin{matrix}
  1 & h & t \cr
\,0 & 1 & x \cr
\,0 & 0 & 1 \cr
                 \end{matrix}
                 \right)\in \mathcal H(K): h\in \widehat K, x\in H, t\in \T
                                  \right\}. 
$$
 It is easy to check that $G_H$ is a subgroup of $\mathcal H(K)$. It is dense in $\mathcal H(K)$ iff $H$ is dense in $K$. We identify $K$ with $\{0_\T \} \times K \times \{0_{ \widehat K} \}$,  $ \widehat K$ with $\{0_\T\} \times \{0_K\} \times  \widehat K$
 and  $ \T$ with $\T \times \{0_K\} \times  \{0_{\widehat K}\}$. 

Elementary computations for the commutator $[u_1,u_2]$ of $u_1, u_2\in \mathcal H(K)$ as in (\ref{(Last)})
 give $[u_1,u_2] = u_1u_2u_1^{-1}u_2^{-1}= (f_1(x_2)-f_2(x_1),0_E,0_F)$ hence 
$$
Z(\mathcal H(K))=\left(\begin{matrix}
   1 & 0 & \T \cr
 \,0 & 1 & 0 \cr
 \,0 & 0 & 1 \cr
                 \end{matrix}
                 \right).
$$
The quotient $\mathcal H(K) /Z(\mathcal H(K))$ is isomorphic to the (abelian) direct product $K \times  \widehat K$. Therefore,  $\mathcal H(K)$ is a nilpotent group of class 2. 

To prove that $G_H$ is minimal, it suffices to check that every non-trivial normal subgroup $N$ of $\mathcal H(K)$ non-trivially meets $Z(\mathcal H(K))\subseteq G_H$. Let $u \in N$, $u\ne 1$. If $u\in  Z(\mathcal H(K))$ we are done. If $u\not \in Z(\mathcal H(K))$, then there exists $v\in \mathcal H(K)$ such that $[u,v]\ne 1$.  Since $vuv^{-1}\in N$, we conclude that $1\ne [u,v]=uvu^{-1}v^{-1}\in N$. On the other hand, $[u,v]\in Z(\mathcal H(K))$ as we have seen above. This proves that $N \cap Z(\mathcal H(K))\ne \{1\}$. This establishes the  minimality of the group $G_H$. 

Obviously,
$$
O =  \left(\begin{matrix}  
  1 & 0 & \T \cr
\,0 & 1 &  H  \cr
\,0 & 0 & 1 \cr
                 \end{matrix} \right) 
$$ 
is an open subgroup of $G_H$ and $O \cong H \times \T$. Moreover, $O$ contains  $Z(\mathcal H(K))=\mathcal H(K)'$. Hence, $O$ is a normal subgroup of $\mathcal H(K)$.  
\end{proof}

\begin{proof}[\bf Proof of Theorem \ref{Last:Theorem}]
This proof uses the same construction with only difference consisting in replacement of the group $\T$
by the group $\Z(2)$. To achieve this, we first take the group $G_H$ built in the above proof with ingredients $K = \Z(2)^\cont$ and the dense subgroup $H$ of $K$ to be  countably compact and without non-trivial convergent  sequences. 
Such a group $H$ was recently built Hru\v s\' ak, van Mill, Ramos-Garc\' \i a and Shelah \cite{H-S}.  Then 
$$
L:=  \left(\begin{matrix}  
  1 & \widehat K  & \Z(2) \cr
\,0 & 1 &  H  \cr
\,0 & 0 & 1 \cr
                 \end{matrix}
                 \right) =  \left\{\left(\begin{matrix}
  1 & h & t \cr
\,0 & 1 & x \cr
\,0 & 0 & 1 \cr
                 \end{matrix}
                 \right)\in \mathcal H(K): h\in \widehat K, x\in H, t\in \Z(2)
                                  \right\}
$$
is a subgroup of $G_H$. For brevity, we identify $H$ and $\Z(2) \oplus H$ with subgroups
$$
\left(\begin{matrix}  
  1 & 0  & 0 \cr
\,0 & 1 &  H  \cr
\,0 & 0 & 1 \cr
                 \end{matrix}
                 \right) 
         \ \        \mbox{ and }\ \ 
\left(\begin{matrix}  
  1 & 0  & \Z(2) \cr
\,0 & 1 &  H  \cr
\,0 & 0 & 1 \cr
                 \end{matrix}
                 \right)
$$
of $L$, respectively. According to \cite[Lemma 5.16 (C)]{DM},  the following holds:
\bit
\item [(i)] \ \ $L$ is a locally precompact, minimal nilpotent group (of class 2) of weight $\cont$;
\item [(ii)] \ \ $\Z(2) \oplus H$ is an open normal subgroup of $L$;
\item [(iii)] \ \ $L$ has finite  exponent $4$;
\item [(iv)] \ \ $L$ has no convergent sequences, as $H$ has no convergent  sequences;
\item [(v)] \ \ $L$ is locally countably compact, as $H$ is countably compact.
\eit
Therefore, $L$ is the desired example.
\end{proof}

\begin{remark}
The countably compact Boolean group $H$ without non-trivial convergent sequences built in \cite{H-S} and used in the proof of Theorem
\ref{Last:Theorem} above cannot be minimal. The reason for this is the fact that $H$ is abelian, so $H$ must have non-trivial convergent sequences by Corollary \ref{corollaryconvergent:sequance}.

An alternative way to prove that $H$ is not minimal is to notice that {\em a minimal  Boolean group $H$ is compact\/}, hence $H$ must have non-trivial convergent sequences. Indeed, since $H$ is minimal and abelian, its completion $K$ is compact by Theorem \ref{PS}.
Since $H$ is minimal, it must be essential in $K$ by  Theorem \ref{crit}. Since $H$ is Boolean, this yields $H=K$.
\end{remark}

\section{Proofs of Theorems \ref{super-sequence:in:essential:subgroups}, 
\ref{super-sequence:in:minimal:connected:precompact:groups} and \ref{weight:Stoyanov}}
\label{Sec:10}

\begin{lemma}
\label{lemma:0.1} Let $I$ be an infinite set, $\{K_i: i\in I\}$ a family of non-trivial topological groups and $G$  a  locally  essential subgroup of the product $K=\prod_{i\in I} K_i$. Then $G$ contains a super-sequence $S$ converging to $1$ such that $|S|=|I|$.
\end{lemma}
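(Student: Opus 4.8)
The statement asks: given an infinite product $K = \prod_{i \in I} K_i$ of nontrivial topological groups and a locally essential subgroup $G$ of $K$, find a super-sequence $S \subseteq G$ converging to $1$ with $|S| = |I|$. The plan is first to reduce to the genuinely essential case. By Lemma \ref{reduction:to:essential0}, there is a finite $J \subseteq I$ such that $H_1 := G \cap G_J$ is essential in $G_J = \prod_{i \in I \setminus J} K_i$. Since $I$ is infinite, $|I \setminus J| = |I|$, so it suffices to produce the desired super-sequence inside $H_1 \subseteq G$ with respect to the reindexed product. Thus from now on I assume $G$ is essential in $K = \prod_{i \in I} K_i$ and aim for $|S| = |I|$.

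The heart of the argument is to extract, for each coordinate $i$, a nontrivial element of $G$ supported essentially at $i$. Because $G$ is essential in $K$, for every $i \in I$ the closed normal subgroup $K_i$ (embedded as the $i$-th coordinate subgroup) must meet $G$ nontrivially — more precisely, essentiality gives that the closure of $G \cap K_i$ in $K_i$ is all of $K_i$ when $K_i$ is, say, a one-element-generated closed subgroup; but in the generality stated I only need one nontrivial element, so I pick $s_i \in G \cap K_i$ with $s_i \neq 1$. Here I should be a little careful: essentiality is phrased via closed normal subgroups $N$ with $N \cap G = \{1\}$; applying it to $N = K_i$ directly gives $G \cap K_i \neq \{1\}$ provided $K_i$ is closed and normal in $K$, which it is. So such $s_i$ exist for every $i$.

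Now I form the candidate super-sequence $S = \{s_i : i \in I\} \cup \{1\}$, where each $s_i$ has all coordinates equal to $1$ except the $i$-th. Faithful indexing: distinct $i$ give distinct $s_i$ (their supports differ), so $|S| = |I|$. The convergence claim: a basic neighbourhood $U$ of $1$ in $K$ has the form $U = \prod_{i \in F} U_i \times \prod_{i \notin F} K_i$ for a finite $F \subseteq I$ and neighbourhoods $U_i$ of $1$ in $K_i$. For every $i \notin F$, the element $s_i$ lies in the $i$-th coordinate subgroup and hence in $\prod_{i \notin F} K_i \subseteq$ the part of $U$ outside $F$; more carefully, $s_i \in U$ automatically since its only nontrivial coordinate is the $i$-th and $i \notin F$. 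Thus $s_i \in U$ for all but the finitely many $i \in F$, so every neighbourhood of $1$ contains all but finitely many points of $S$. Finally, $1$ is the unique non-isolated point and $S$ is Hausdorff (subspace of the Hausdorff group $K$, restricted to $G$), so $S$ is a convergent super-sequence in $G$ converging to $1$.

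The main obstacle I anticipate is ensuring that $1$ is genuinely the only non-isolated point of $S$, i.e., that each $s_i$ is isolated in $S$. This requires separating $s_i$ from the rest: since $s_i \neq 1$ and $K_i$ is Hausdorff, pick a neighbourhood $V_i$ of $1$ in $K_i$ with $s_i \notin \overline{V_i}$ (or just $s_i \notin V_i V_i^{-1}$-type separation), and use that all $s_j$ with $j \neq i$ have trivial $i$-th coordinate hence lie in the corresponding cylinder that excludes $s_i$; this isolates $s_i$. I also need $S$ infinite, which holds because $I$ is infinite. No heavy machinery is needed beyond Lemma \ref{reduction:to:essential0} and the definition of essentiality; the delicate points are purely the bookkeeping of supports and coordinatewise neighbourhoods.
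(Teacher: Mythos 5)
Your proof is correct and follows essentially the same route as the paper's: reduce to the essential case via Lemma \ref{reduction:to:essential0}, pick a nontrivial $g_i\in G\cap K_i$ in each coordinate subgroup (which is closed and normal, so essentiality applies), and observe that the resulting faithfully indexed family together with $1$ is a super-sequence converging to $1$ because basic neighbourhoods of $1$ restrict only finitely many coordinates. The extra care you take about isolating each $s_i$ is fine but not needed beyond what the definition of convergent super-sequence already gives.
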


\begin{proof}  We show first that we can assume that $G$ is an essential subgroup of $K$ without loss of generality. 
According to  Lemma \ref{reduction:to:essential0} there exists a finite subset $J \subseteq I$ such that considering $K_J =  \prod_{i\in I\setminus J}K_i$ as a subgroup of $K$ in the obvious way, the subgroup $G_J = G \cap K_J$ of $K_J$ is essential. As $|J| = |I|$, we can replace $K$ by $K_J$ and $ G$ by $G_J$ and assume in the sequel that $G$ is an essential subgroup of $K$.   

 For each $i\in I$, use essentiality of $G$ in $K$ to fix $g_i\in G\cap K_{i}$ with  $g_{i}\not=1$.   Since   $K_{i}\cap K_{j}=\{1\}$ 
    whenever $i,j\in I$ and $i\not=j$, it follows that  $\{g_i:i\in I\}$ is a a faithfully indexed family of elements of $G$. In particular, for $S=\{0\}\cup \{g_i:i\in I\}$ we have $|S|=|I|$. Furthermore, $S$ is a super-sequence converging to $1$. 
Clearly, $S\subseteq G$.
\end{proof}

\begin{proof}[\bf Proof of  Theorem \ref{super-sequence:in:essential:subgroups}] 
We have to prove that  $G$ contains a super-sequence $S$ converging to 0 with $|S| =\chi(K)$.  This is trivially true if $K$ is metrizable, since then $G$ is metrizable and non-discrete, so contains an infinite sequence converging to $0$. 

Assume now that $K$ is non-metrizable. By a well-known structure theorem of the locally compact abelian groups \cite{DPS}, we can assume, up to isomorphsim, that $K= \R^n \times L$, where $n \in \N$ and the group $L$ contains an open compact subgroup $C$. Since $K$ is non-metrizable, 
$\chi(K) = \chi(C) = w(C)> \omega$. Apply Theorem  \ref{structure:theorem} to $C$ to find a   closed subgroup $H$ of $C$ (hence of $K$ as well)
which is a direct product of $w(K)$-many non-trivial second countable groups.  From the local essentiality of $G$ in $K$ and Lemma \ref{claim1}, we deduce that $G\cap H$ is locally essential in $H$.  By Lemma \ref{lemma:0.1}, $G\cap H$ contains a super-sequence $S$ converging to $0$ such that $|S|=w(C)=\chi(K)$.
\end{proof}

\begin{proof}[{\bf Proof of Theorem \ref{super-sequence:in:minimal:connected:precompact:groups}}]
Since $G$ is precompact and connected,  the completion 
$K$ of $G$ is a compact connected group.
Then  
$K=K' Z(K)$
\cite{HM}. Moreover, it follows from a theorem of Varopoulos \cite{HM} that 
$$
K/Z(K)\cong  K'/(K'\cap Z(K)) = K'/Z(K') \cong \prod_{i\in I}L_i,
$$
 where each $L_i$ is a simple compact Lie group. Moreover, if for each $i\in I$ one denotes by $\widetilde L_i$ the covering group of $L_i$, then $Z_i=Z(\widetilde L_i)$ is finite, and  for some closed central subgroup $N$ of $L=\prod_{i\in I} \widetilde L_i$ one has $K'\cong L/N$.
Let $q: L \to K'$ be the canonical map, provided $K'$ is identified with the quotient $L/N$ under the above mentioned isomorphism. Since $F_i=N \cap \widetilde L_i$ is finite for every $i\in I$, the compact subgroup $L_i^*=q(\widetilde L_i)$ of $K$ is non-trivial,  as $q(\widetilde L_i) = \{1\}$ would imply $L_i\leq N = \ker q$, which contradicts the fact that $L_i$ is a non-trivial connected group while $N$ is totally disconnected, being a subgroup of $Z(L) = \prod_{i\in I}Z_i$. Moreover,  $L_i^*$  is normal, as $\widetilde L_i$ is normal in $L$ and $q$ is surjective. 

Since $G$ is locally minimal, $G$
is
 locally essential in $K$ by Theorem \ref{crit}. Fix an open neighborhood $V$ of $1$ witnessing local essentiality of $G$ in $K$. 
Since $q^{-1}(V)$ is a neighborhood of $1$ in $L$, there exists a co-finite subset $J$ of $I$ such that $\prod_{i\in J} \widetilde L_i \subseteq q^{-1}(V)$ (here we identify $\prod_{i\in J} \widetilde L_i$ with a subgroup of $L$ in the obvious way). In particular, $L_i^*\subseteq V$ for every $i\in J$. By our choice of $V$, it follows that $G\cap L_i^*$ is non-trivial  for every $i\in J$.  Fix an element $x_i^*\in G\cap L_i^*$ with $x_i^*\ne e$ and choose an element $x_i \in \widetilde L_i$ such that $q(x_i) = x_i^*$. It is easy to prove that the set $S=(x_i)_{i\in J}$ is a super-sequence in $L$ converging to $1$. Then  $S_l:=q(S) = (x_i^*)_{i\in J}$  is a super-sequence in $G$ converging to $1$.
Obviously, $|S_l| = |J|=w(K/Z(K))$. 

Note that $G \cap Z(K) \subseteq Z(G)$.  Since $\overline{G}$ coincides with $K$,  we have $ Z(G) \subseteq Z(K)$, so  $G \cap Z(K) = Z(G)$. Since every subgroup of  $Z(K)$ is normal, we conclude that $Z(G)$ is a locally essential subgroup of $Z(K)$. Now consider two cases. 

If $Z(G)$ is infinite, Theorem \ref{super-sequence:in:essential:subgroups} applies to produce a super-sequence $S_z$ in $Z(G)$ converging to $1$ such that $|S_z|=w(Z(K))$. 

Let $S = S_z \cup S_l$. Then $S$ is a super-sequence in $G$ converging to $1$. Moreover,  
$$
|S| = \max\{w(K/Z(K)), w(Z(K))\}= w(K) = w(G).
$$

Now assume that $Z(G)$ is finite. Since $Z(G)$ is locally essential in $Z(K)$,
there exists a \nbd \  $V$ of 
$1$
 in $Z(K)$ witnessing that. Since $Z(G)$ is finite, we can choose  a neighbourhood $U$  of  
$1$
such that 
$\overline{U} \cap Z(G) = \{1\}$.
 Then for every subgroup $N$ of $Z(K)$ contained in $U\cap V$, the  closure of $N$ is contained in $\overline{U}$, hence misses $Z(G)$, by the choice of $U$. Therefore,  the definition of local essentiality of $Z(G)$ in $Z(K)$ entail that $U\cap V$   
contains no non-trivial subgroups of $Z(K)$, i.e., $Z(K)$ is a compact abelian NSS group. It is well known that this implies that $Z(K)$ is a Lie group; in particular, $Z(K)$ is metrizable. This yields  
$$
w(K) =\max\{w(K/Z(K)), w(Z(K))\}= w(K/Z(K)) =  |S_l|.
$$
 Hence, the super-sequence $S_l$ converging to $1$ in $G$ is the desired one. 
\end{proof}

\begin{lemma}
\label{size:of:essential:subgroups:of:powers} Let $H$ be any non-trivial abelian topological
group and $\sigma$ an infinite cardinal.  Then every locally essential subgroup of $H^\sigma$ has size at least $2^\sigma$.
\end{lemma}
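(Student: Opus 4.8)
The plan is to reduce immediately to the case of a genuinely essential subgroup and then to use, as witnessing subgroups, the ``diagonal'' copies of $H$ inside $H^\sigma$ — it is here that the hypothesis that $H^\sigma$ is a \emph{power} (and not merely some product) will be used. For the reduction I would apply Lemma~\ref{reduction:to:essential0} with index set $\sigma$, with every factor equal to $H$, and with the given locally essential subgroup $G$; this yields a finite set $J\subseteq\sigma$ such that, identifying $H^{\sigma\setminus J}$ with the closed subgroup $\{x\in H^\sigma : x_i=1 \text{ for all } i\in J\}$ of $H^\sigma$, the subgroup $G_0:=G\cap H^{\sigma\setminus J}$ is essential in $H^{\sigma\setminus J}$. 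Since $\sigma$ is infinite and $J$ is finite, $|\sigma\setminus J|=\sigma$, so $H^{\sigma\setminus J}$ is topologically isomorphic to $H^\sigma$; as $|G|\ge|G_0|$, it is enough to prove that every essential subgroup of $H^\sigma$ has cardinality at least $2^\sigma$. So I assume henceforth that $G$ itself is essential in $H^\sigma$.

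Next, for each non-empty $A\subseteq\sigma$ I would consider the diagonal over $A$,
\[
\Delta_A=\{x\in H^\sigma : x_i=x_j \text{ for all } i,j\in A,\ \text{ and } x_i=1 \text{ for all } i\in\sigma\setminus A\}.
\]
Because $H$ is Hausdorff, each condition ``$x_i=x_j$'' and ``$x_i=1$'' cuts out a closed subset of $H^\sigma$, so $\Delta_A$ is a closed subgroup of $H^\sigma$; it is normal since $H$ is abelian; and it is non-trivial, because $H$ is non-trivial and the element whose coordinates equal a fixed $h\ne1$ on $A$ and $1$ off $A$ belongs to $\Delta_A$. By essentiality of $G$ I may therefore pick $h_A\in(G\cap\Delta_A)\setminus\{1\}$. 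All coordinates of $h_A$ indexed by $A$ share one value $h\in H$, all the others equal $1$, and $h\ne1$ since $h_A\ne1$; hence $\supp(h_A)=A$ exactly.

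Finally, the assignment $A\mapsto h_A$ is an injection of the set of non-empty subsets of $\sigma$ into $G$, since $A$ is recovered from $h_A$ as $\supp(h_A)$; therefore $|G|\ge 2^\sigma$, as required. I expect the one genuinely delicate point to be precisely the choice of witnessing subgroups in the middle step: one must resist the otherwise natural reduction of replacing $G_0$ by $G_0\cap\prod_i M_i$ with $M_i=\overline{\langle g_i\rangle}$ for elements $g_i\in G_0$ supported at single coordinates, since this destroys the coincidence of the factors — and over a product of pairwise non-isomorphic groups an essential subgroup can be small (for instance $\bigoplus_n\Z(p_n)$ is essential in $\prod_n\Z(p_n)$ when the $p_n$ are distinct primes). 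The diagonal subgroups $\Delta_A$ are available exactly because all factors of $H^\sigma$ coincide, and it is they, rather than the coordinate subgroups, that force $G$ to be large.
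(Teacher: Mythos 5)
Your proof is correct and follows essentially the same route as the paper's: after reducing to the essential case via Lemma~\ref{reduction:to:essential0}, the paper also picks, for each subset $A$ of $\sigma$, a non-zero element of $G$ inside a closed subgroup whose elements are constant on $A$ and trivial off $A$ (it uses the closed cyclic subgroup generated by the indicator-type element $h_A$, which sits inside your $\Delta_A$), and recovers $A$ as the support. Your variant with the full diagonal $\Delta_A$ is a harmless, if anything slightly cleaner, packaging of the same idea.
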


\begin{proof} Let $G$ be a  locally  essential subgroup of $H^\sigma$.  By Corollary \ref{reduction:to:essential} we can assume without loss of generality that $G$ is essential. Since $H$ is non-trivial, we can choose $z\in H\setminus \{0\}$. For every  $f\in2^\sigma$ define $h_f \in H^\sigma$ by $h_f(\alpha)=z$ when  $f(\alpha)=1$ and $h_f(\alpha)=0$ otherwise. Let $N_f$ be the smallest closed subgroup of $H^\sigma$ containing $h_f$.  Since $N_f$ is a non-trivial closed subgroup of $H^\sigma$ and $G$ is essential in $H^\sigma$, we can choose $x_f\in (G\cap N_f)\setminus \{0\}$. We claim that the map $f\mapsto x_f$ is an injection of $2^\sigma$ into $G$, which gives $|G|\ge 2^\sigma$. Indeed, assume $f_0,f_1\in 2^\sigma$ and $f_0\not=f_1$. Then there exist  $i\in\{0,1\}$ and $\alpha<\sigma$ such that $f_i(\alpha)=0$ and $f_{1-i}(\alpha)=1$. Now $x_{f_i}(\alpha)=0$ while $x_{f_{1-i}}(\alpha)=z\not=0$, which yields $x_{f_0}\not=x_{f_1}$.
\end{proof}

\begin{proof}[\bf Proof of Theorem \ref{weight:Stoyanov}]  
The completion $K$ of $G$ is a compact group.  We consider first the case when $G$ is abelian. Then $K$ is abelian as well, $w(K)=w(G)$ and 
$G$ is  locally  essential in $K$ by Theorem \ref{crit}.

If $w(K)=\omega$, then $\omega\le |G|\le 2^{w(G)}=2^\omega$, and now the sequence $\{n:n\in\N\}$ witnesses $\Min(|G|,\omega)$.

Assume now that $w(K)>\omega$. Then $K$ is non-metrizable, and we can apply Theorem  \ref{structure:theorem} to $K$.  Define $P_\sigma=\{p\in\Prm: \sigma_p\ge \omega\}$ and  $P_\kappa=\{p\in\Prm: \kappa_p\ge \omega\}$. Since $w(H)=w(K)\ge\omega_1$, from \eqref{eq:1} it follows that 
\begin{equation}
\label{eq:0.0}
w(H)= \sup(\{\sigma_p:{p\in P_\sigma}\}\cup \{\kappa_p:{p\in P_\kappa}\}). 
\end{equation}

(i) Let $p\in P_\sigma$. By Lemmas \ref{claim1} and \ref{claim2}, $G$ contains a copy of $\Z(p)^{\sigma_p}$, so $|G|\ge |\Z(p)^{\sigma_p}|=2^{\sigma_p}$.

(ii) Let $p\in P_\kappa$. Let $L_p$ be a closed subgroup of $K$ with $L_p\cong \Z_p^{\kappa^p}$. By Lemma \ref{claim1}, $G\cap L_p$ is  locally essential in $L_p$, so from Lemma 
\ref{size:of:essential:subgroups:of:powers} we get  $|G|\ge |G\cap L_p|=2^{\kappa_p}$.

From (i) and (ii) it follows that 
\begin{equation}
\label{eq:0.1}
|G|\ge \sup\left(\left\{2^{\sigma_p}:{p\in P_\sigma}\right\}\cup \left\{2^{\kappa_p}:{p\in P_\kappa}\right\}\right).
\end{equation}
Let $\{p_{2m}:m\in\N\}$ be an enumeration of the set $P_\sigma$ and  $\{p_{2m+1}:m\in\N\}$ be an enumeration of the set $P_\kappa$ (repetitions are allowed). Define $\lambda_n=\sigma_{p_n}$ for an even $n$ and $\lambda_n=\kappa_{p_n}$ for an odd $n$. Since $w(H)=w(K)=w(G)\le 2^{w(G)}$, from  \eqref{eq:0.0} and \eqref{eq:0.1} we conclude that the sequence $\{\lambda_n:n\in\N\}$ witnesses $\Min(|G|,w(G))$. 

In the general case,  $w(G)=\chi(G) = \chi(Z(G)) =w(Z(G))$,
as $G$ (and consequently, also $Z(G)$) is precompact. Since the abelian group $Z(G)$ is locally minimal \cite{LocMin}, we can apply the above argument and conclude that $\Min(|Z(G)|,w(Z(G)))$ holds.  Hence, there exists a sequence of cardinals $\{\lambda_n:{n\in \N}\}$ such that 
$w(Z(G))= w(G)= \sup(\{\lambda_n:{n\in \N}\}$ and $\sup_n 2^{\lambda_n}\leq |Z(G)| \leq 2^{w(Z(G))}=2^{w(G)}$. 
This  obviously gives  $\sup_n 2^{\lambda_n}\leq |Z(G)| \leq |G| \leq 2^{w(G)}$. Hence, $\Min(|G|,w(G))$ holds.
\end{proof}

\section{Open problems}

\begin{question}
\label{question:0}
Does the tightness of a locally minimal abelian group coincide with its character?
\end{question}

\begin{question}
\label{question:1}
Is every locally minimal (abelian) group of countable tightness necessarily metrizable? Is it necessarily Fr\' echet-Urysohn or sequential? 
\end{question}

\begin{question}\label{question:TvsMetr}
\begin{itemize}
  \item[(a)]  Is every totally minimal (precompact) resolvable group of countable tightness metrizable?
    \item[(b)] Is every minimal precompact nilpotent group of countable tightness  metrizable?
\end{itemize}
\end{question}

\begin{question}
\label{question:2}
Is every pseudoradial minimal abelian group necessarily metrizable?
\end{question}

Let $\tau$ be a cardinal. According to \cite{Arh0}, a transfinite sequence $\{x_\alpha:\alpha<\tau\}$ of points of topological space $X$ is called a {\em free sequence of length $\tau$\/} provided that
$\overline{\{x_\alpha:\alpha<\beta\}}\cap \overline{\{x_\alpha:\alpha\ge \beta\}}=\emptyset$ for every $\beta<\tau$.
The supremum of lengths of all free sequences in a topological space $X$ shall be denoted by $fs(X)$. 

Arhangel'ski\u\i\ proved in \cite{Arh0} that $t(X)=fs(X)$ for every compact space $X$. While the inequality $fs(X)\le t(X)$ holds for an arbitrary space $X$ \cite{Arh0},  the converse inequality $t(X)\le fs(X)$ need not hold even for a $\sigma$-compact space $X$ \cite{Okunev}. 

Since minimal abelian groups are precompact by Theorem~\ref{PS}
(so have some form of compactness), one may wonder whether Arhangel'ski\u\i\'s characterization of the tightness of compact spaces in terms of cardinalities of free sequences holds also for minimal abelian groups.

\begin{question}
\label{question:3}
Is it true that  $t(G)=fs(G)$ for every minimal abelian group $G$?
\end{question}

Our next question is related to Corollary \ref{corollaryconvergent:sequance}. In Theorem \ref{Last:Theorem} we exhibited 
 a nilpotent locally precompact (actually, locally contably compact) minimal group without non-trivial convergent sequences. This leaves open the following question: 
  
\begin{question} \label{question:convergent:sequance}
\begin{itemize}
   \item[(a)] Does every infinite minimal resolvable precompact group admit a non-trivial convergent sequence? What about  infinite minimal precompact nilpotent groups? 
   \item[(b)] Is the answer of the questions in item (a) positive if the groups in question are also countably compact? 
\end{itemize}
\end{question}

\begin{question}\label{question:Min}
Does $\Min(|G|,w(G))$ hold true for every minimal precompact nilpotent group $G$? 
\end{question}

{\sc Acknowledgements.} The authors are indebted to the referee for her/his careful reading and useful suggestions.

\end{document}